\begin{document}

\title[MLE{n}KF for spatio-temporal processes]
      {Multilevel ensemble Kalman filtering for spatio-temporal processes}

\author[A. Chernov]{Alexey Chernov$^{*}$}
\thanks{$^{*}$
Institute for Mathematics, Carl von Ossietzky University Oldenburg, Germany (alexey.chernov@uni-oldenburg.de)}
\author[H. Hoel]{H{\AA}kon Hoel$^\dagger$}
\thanks{$^\dagger$Chair of Mathematics for Uncertainty Quantification, RWTH Aachen University, Aachen, Germany (hoel@uq.rwth-aachen.de)}
\author[K. J. H. Law]
{Kody J. H. Law$^\ddagger$}
\thanks{$^\ddagger$
School of Mathematics, 
University of Manchester, United Kingdom (kody.law@manchester.ac.uk)}

\author[F. Nobile]
{Fabio Nobile$^{\mathsection}$}
\thanks{$^{\mathsection}$
Mathematics Institute of Computational Science and Engineering, \'Ecole polytechnique f\'ed\'erale de Lausanne, 
Switzerland (fabio.nobile@epfl.ch)}
\author[R. Tempone]
{Raul Tempone$^\mathparagraph$}
\thanks{
  $^\mathparagraph$ Chair of Mathematics for Uncertainty Quantification, RWTH Aachen University, Aachen, Germany (tempone@uq.rwth-aachen.de)
  \and
Applied Mathematics and Computational Sciences, 
KAUST, Thuwal, Saudi Arabia.}



%

\begin{abstract}
  We design and analyse the performance of a multilevel ensemble
  Kalman filter method (MLEnKF) for filtering settings where the
  underlying state-space model is an infinite-dimensional
  spatio-temporal process. We consider underlying models that needs to
  be simulated by numerical methods, with discretization in both space
  and time. The multilevel Monte Carlo (MLMC) sampling strategy,
  achieving variance reduction through pairwise coupling of ensemble
  particles on neighboring resolutions, is used in the sample-moment
  step of MLEnKF to produce an efficient hierarchical filtering method
  for spatio-temporal models. Under sufficient regularity, MLEnKF is
  proven to be more efficient for weak approximations than EnKF,
  asymptotically in the large-ensemble and fine-numerical-resolution
  limit. Numerical examples support our theoretical findings.
   
   \bigskip
   \noindent \textbf{Key words}: Monte Carlo, multilevel, filtering, ensemble Kalman filter,
   stochastic partial differential equations (SPDE).
   
   \bigskip
   \noindent \textbf{AMS subject classification}:
   65C30, 65Y20
\end{abstract}

\maketitle




\section{Introduction}
\label{sec:intro}


Filtering refers to the sequential estimation of the state $u$ and/or
parameters of a system through sequential incorporation of online data
$y$.  The most complete estimation of the state $u_n$ at time $n$ is
given by its probability distribution conditional on the observations
up to the given time $\bbP(du_n|y_1,\ldots, y_n)$ \cite{jaz70,
  BC09}. For linear Gaussian systems, the analytical solution may be
given in closed form via update formulae for the mean and covariance
known as the Kalman filter \cite{kalman1960new}.  More generally,
however, closed form solutions typically are not known.  One must
therefore resort to either algorithms which approximate the
probabilistic solution by leveraging ideas from control theory in the
data assimilation community \cite{kal03,jaz70}, or Monte Carlo methods
to approximate the filtering distribution itself \cite{BC09,
  doucet2000sequential, del2004feynman}.  The ensemble Kalman filter
(EnKF) \cite{burgers1998analysis, evensen2003ensemble, law2015data}
combines elements of both approaches.  In the linear Gaussian case it
converges to the Kalman filter solution in the large-ensemble limit
\cite{mandel2011convergence}, and even in the nonlinear case, under
suitable assumptions it converges \cite{le2011large,
  law2014deterministic} to a limit which is optimal among those which
incorporate the data linearly and use a single update
iteration~\cite{law2014deterministic, luenberger1968optimization,
  pajonk2012deterministic}.
In the case of spatially extended models approximated on a numerical
grid, the state space itself may become very high-dimensional and even
the linear solves may become intractable, due to the cost of computing
the covariance matrix.  Therefore, one may be inclined to use the EnKF
filter even for linear Gaussian problems in which the solution is
computationally intractable despite being given in closed form by the
Kalman filter.


The Multilevel Monte Carlo method (MLMC) is a hierarchical and
variance-reduction based approximation method initially developed for
weak approximations of random fields and stochastic differential
equations~\cite{heinrich2001multilevel, Giles08, Giles14}.  Recently,
a number of works have emerged which extend the MLMC framework to the
setting of Monte Carlo algorithms designed for Bayesian inference.
Examples include Markov chain Monte Carlo
\cite{ketelsen2013hierarchical, hoang2013complexity}, sequential Monte
Carlo samplers \cite{beskos2015multilevel, jasra2016forward,
  del2016multilevel}, particle filters \cite{jasra2015multilevel,
  gregory2016multilevel}, and EnKF~\cite{ourmlenkf}. The filtering
papers thus far~\cite{jasra2015multilevel, gregory2016multilevel,
  ourmlenkf} consider only \hh{finite-dimensional} SDE forward models.
In this work, we develop a new multilevel ensemble Kalman filtering
method (MLEnKF) for the setting of infinite-dimensional state-space
models with evolution in continuous-time.  The method consists of a
hierarchy of pairwise coupled EnKF-like ensembles on different
finite-resolution levels of the underlying infinite-dimensional
evolution model that all depend on the same Kalman gain in the update
step.  The method presented in this work may be viewed as an extension
of the finite-dimensional-state-space MLEnKF method~\cite{ourmlenkf},
which only considered a hierarchy of time-discretization resolution
levels.

Under sufficient regularity, the large-ensemble limit of EnKF is equal
in distribution to the so-called mean-field EnKF (MFEnKF),
cf.~\cite{le2011large, law2014deterministic, kwiatkowski2014convergence}. In
nonlinear settings, however, MFEnKF is not equal in distribution to
the Bayes filter, which is the exact filter distribution. More
precisely, the error of EnKF approximating the Bayes filter may be
decomposed into a statistical error, due to the finite ensemble size,
and a Gaussian bias that is introduced by the Kalman-filter-like
update step in EnKF.
%
%
While the update-step bias error in EnKF is difficult both to quantify
and deal with, the statistical error can, in theory, be reduced to
arbitrary magnitude.  However, the high computational cost of
simulations in high-dimensional state space often imposes small
ensemble size as a practical constraint. By making use of hierarchical
variance-reduction techniques, the MLEnKF method developed in this
work is capable of obtaining a much smaller statistical error than
EnKF at the same fixed cost.
%

In addition to design an MLEnKF method for spatio-temporal processes,
we provide an asymptotic performance analysis of the method that is
applicable under sufficient regularity of the filtering problem and
$L^p$-strong convergence of the numerical method approximating the
underlying model dynamics. Sections~\ref{sec:numericalAnalysis}
and~\ref{sec:numex} are devoted to a detailed analysis and practical
implementation of MLEnKF applied to linear and semilinear stochastic
reaction-diffusion equations.  In particular, we describe how the
pairwise coupling of EnKF-like hierarchies should be implemented for
one specific numerical solver (the exponential-Euler method), and
provide numerical evidence for the efficiency gains of MLEnKF over
EnKF.



Since particle filters are known to often perform better than EnKF,
we also include a few remarks on how we believe such methods
would compare to MLEnKF in filtering settings with spatial processes.
Due to the poor scaling of particle ensemble size in high dimensions,
which can even be exponential~\cite{BLB08, BBL08}, particle filters
are typically not used for spatial processes, or even modestly
high-dimensional processes.  There has been some work in the past few
years which overcomes this issue either for particular examples
\cite{beskos2014stable} or by allowing for some bias \cite{bcj11,
  vl10, rebeschini2013can, weare2009particle}. But particle filters
cannot yet be considered practically applicable for general spatial
processes.  If there is a well-defined limit of the model as the
state-space dimension $d$ grows such that the effective dimension of
the target density with respect to the proposal remains finite or even
small, then useful particle filters can be
developed~\cite{kantas2014sequential,llopis2017particle}.  As noted
in~\cite{chatterjee2015sample}, the key criterion which needs to be
satisfied is that the proposal and the target are not mutually
singular in the limit. MLMC has been applied recently to particle
filters, in the context where the approximation arises due to time
discretization of a finite-dimensional SDE \cite{jasra2015multilevel,
  gregory2016multilevel}.  It is an interesting open problem to design
multilevel particle filters for spatial processes: Both the range of
applicability and the asymptotic performance of such a method versus
MLEnKF when applied to spatial processes are topics that remain to be
studied.

The rest of the paper is organized as follows.
Section~\ref{sec:kalman} introduces the filtering problem and
notation. The design of the MLEnKF method is presented in
section~\ref{sec:mlmf}.  Section~\ref{sec:theory} studies the weak
approximation of MFEnKF by MLEnKF, and shows that in this setting,
MLEnKF inherits almost the same favorable asymptotic ``cost-to-accuracy''
performance as standard MLMC applied to weak approximations of
stochastic spatio-temporal processes.  Section\ref{sec:numericalAnalysis}
presents a detailed analysis and description of the implementation of
MLEnKF for a family of stochastic reaction-diffusion models. Section~\ref{sec:numex}
provides numerical studies of filtering problems with
linear and semilinear stochastic reaction-diffusion models that
corroborate our theoretical findings. Conclusions and future
directions are presented in section \ref{sec:conclusion}, and
auxiliary theoretical results and technical proofs are
provided in Appendices~\ref{app:mzIneq},~\ref{app:proofLemmaConv} and~\ref{app:extras}.





\section{Set-up and single level algorithm}
\label{sec:kalman}


\subsection{General set-up}
\label{subsec:genFiltering}

Let $(\Omega,\cF, \bbP)$ be a complete probability space, where $\bbP$
is a probability measure on the measurable space $(\Omega, \cF)$.  Let
$\cH$ be a separable Hilbert space with inner product
$\langle \cdot,\cdot \rangle_{\cH}$ and norm
$\|\cdot \|_{\cH} = \sqrt{\langle \cdot,\cdot \rangle_{\cH}}$.  Let
$V$ denote a subspace of $\cH$ which is closed in the
topology induced by the norm
$\|\cdot \|_{\cHO} = \sqrt{\langle \cdot,\cdot \rangle_{\cHO}}$,
which is assumed to be a stronger norm than $\|\cdot \|_{\cH}$. For
an arbitrary separable Hilbert space $(\cK, \|\cdot \|_{\cK})$, we denote
the associated $L^p$-Bochner space by
\[
L^p(\Omega,\cK) = \{u : \Omega \rightarrow \cK \mid u \text{ is measurable
  and } \E{\|u\|^p_\cK}
<\infty \}, \quad \text{for } p \in [1, \infty),
\]
where $\| u \|_{L^p(\Omega,\cK)} = (\E{\|u\|^p_\cK})^{1/p}$, or the
shorthand $\| u \|_p$ whenever confusion is not possible.  For an
arbitrary pair of Hilbert spaces $\cK_1$ and $\cK_2$, the space of bounded
linear mappings from the former space into the latter is denoted by
\[
L(\cK_1, \cK_2) \coloneq \left\{H:\cK_1 \to \cK_2 \mid H \text{ is
  linear and } \|H\|_{L(\cK_1, \cK_2)} < \infty \right\},
\]
where
\begin{equation*}\label{eq:operatorNorm}
\|H\|_{L(\cK_1,\cK_2)} \coloneq \sup_{x \in \cK_1\setminus\{0\}} \frac{\|Hx\|_{\cK_2}}{\|x\|_{\cK_1}}.
\end{equation*}
In finite dimensions, $(\bbR^m, \langle \cdot, \cdot \rangle)$
represents the $m$-dimensional Euclidean vector space with norm
$| \cdot | \coloneq \sqrt{\langle \cdot, \cdot \rangle}$,
and for matrices $A \in L(\bbR^{m_1},\bbR^{m_2})$, $|A| := \|A\|_{L(\bbR^{m_1},\bbR^{m_2})}$.

\subsubsection{The filtering problem}

Given $u_0 \in \cap_{p\ge 2} L^p(\Omega,\cHO)$ and the mapping
$\Psi: L^p(\Omega,\cHO) \times \Omega \rightarrow L^p(\Omega,\cHO)$,
we consider the discrete-time dynamics 
\begin{equation}\label{eq:psiDefinition}
u_{n+1}(\omega) = \Psi(u_n,\omega), \quad \text{for} \quad n=0,1,\ldots,N-1.  
\end{equation}
and the sequence of observations
\begin{equation}
y_n(\omega) = H u_n(\omega) + \eta_n(\omega),  \quad n = 1,2, \ldots, N.
\label{eq:obdef}
\end{equation}
Here, $H \in L(\cH, \bbR^{m})$, the sequence $\{\eta_n\}$ consists of
independent and identically $N(0,\Gamma)-$distributed random variables
with $\Gamma \in \bbR^{m\times m}$ positive definite.  In the sequel,
the explicit dependence on $\omega$ will be suppressed where confusion
is not possible.  A general filtering objective is to track the signal
$u_n$ given a fixed sequence of observations
$Y_n := (y_1,y_2, \ldots, y_n)$, i.e., to track the
distribution of $u_n|Y_n$ for $n=1,\ldots$.  In this work, however, we
restrict ourselves to considering the more specific objective of approximating
$\E{\varphi(u_n)|Y_n}$ for a given quantity of interest (QoI)
$\varphi: \cH \to \bbR$. 
The index $n$ will be referred to as time, 
whether the actual time between observations is 1 or not 
(in the examples in Section\ref{sec:numericalAnalysis} and beyond it will be called $T$), 
but this will not cause confusion since time is relative.

\subsubsection{The dynamics}

We consider problems in which $\Psi$ is the finite-time evolution of
an SPDE, e.g.~\eqref{eq:she}, and we will assume that $\Psi$ cannot be
evaluated exactly, but that there exists a sequence 
$\{\psiL:L^p(\Omega,\cH)\times \Omega \to L^p(\Omega,\cH)
\}_{\ell=0}^\infty$ of approximations to the solution
$\Psi:=\Psi^{\infty}$ satisfying the following uniform-in-$\ell$ stability
properties
\begin{assumption}  For every $p \geq 2$, it holds that
  $\Psi: L^p(\Omega,V)\times \Omega \to L^p(\Omega,V)$,
  and for all $u,v \in L^p(\Omega, \cH)$,
  the solution operators $\{\psiL\}_{\ell=0}^\infty$ satisfy the
 following conditions: there exists 
 a constant $0<c_\Psi<\infty$ depending on $p$ such that 
\begin{itemize} 
\item[(i)] $\|\psiL(u) -\psiL(v) \|_{L^p(\Omega,\cH)} {\le} c_{\Psi}
  \|u-v\|_{L^p(\Omega,\cH)} $ , and  
\item[(ii)] $\|\psiL(u)\|_{L^p(\Omega,\cH)} \leq c_{\Psi} (1+\|u\|_{L^p(\Omega,\cH)})$.
\end{itemize}
\label{ass:psilip}
 \end{assumption}

For notational simplicity, we restrict ourselves to settings in which
the map $\Psi(\cdot)$ does not depend on $n$, but the results in this
work do of course extend easily to non-autonomous settings when
the assumptions on $\{\Psi_n\}_{n=1}^N$ are uniform with respect to
$n$.

\begin{remark}
  The two approximation spaces $\cHO \subset \cH$ are introduced in
  order to obtain convergence rates for numerical simulation methods
  $\Psi^\ell$ that are discretized in physical or 
  state space. See Assumption~\ref{ass:mlrates}(i)-(ii) and
  inequality~\eqref{eq:spatialRate} for an example of how this may be
  obtained in practice.
\end{remark}

\subsubsection{The Bayes filter}
The pair of discrete-time stochastic processes $(u_n, y_n)$
constitutes a hidden Markov model, and the exact (Bayes-filter)
distribution of $u_n|Y_n$ may in theory be determined
iteratively through the system of prediction-update equations
\begin{align*}
\label{eq:filteringdist}
\bbP(du_n | Y_n) & = 
\frac1{Z(Y_n)}{\cL(u_n;y_n) \bbP(du_n |Y_{n-1})}, \\
\nonumber
\bbP(du_n |Y_{n-1}) & = \int_{u_{n-1}\in \cH} \bbP(du_n|u_{n-1})\bbP(du_{n-1}|Y_{n-1}), \\
\nonumber
\cL(u_n;y_n) & = \exp\Big\{-\frac{1}{2} |\Gamma^{-1/2}(y_n-Hu_n)|^2\Big\}, \\
Z(Y_n) & = \int_{u_{n}\in \cH} \cL(u_n;y_n) \bbP(du_n |Y_{n-1}).
\nonumber
\end{align*}
When the state space is infinite-dimensional and the dynamics cannot
be evaluated exactly, however, this is an extremely challenging
problem. Consequently, we will here restrict ourselves to constructing
weak approximation methods of the mean-field EnKF,
cf.~Section~\ref{sec:mfenkf}.


\subsection{Some details on Hilbert spaces, Hilbert-Schmidt operators, and Cameron-Martin spaces}
\label{ssec:dets}

For two arbitrary separable Hilbert spaces
$\cK_1 $ and $\cK_2$, the tensor product $\cK_1 \otimes \cK_2$ is
also a Hilbert space. For  rank-1 tensors, its inner product is defined by
\begin{equation*}
 \langle u \otimes v, u' \otimes v' \rangle_{\cK_1 \otimes \cK_2} = \langle u, u' \rangle_{\cK_1}\langle v, v'  \rangle_{\cK_2} \qquad \forall u, u' \in \cK_1, \quad \forall v,v' \in \cK_2,
\end{equation*}
which extends by linearity to any tensor of finite rank.  The
Hilbert space $\cK_1\otimes \cK_2$ is the completion of this set with
respect to the induced norm 
\begin{equation}\label{eq:hsdef}
\|u \otimes v\|_{\cK_1 \otimes \cK_2} = \|u\|_{\cK_1}\|v\|_{\cK_2}.
\end{equation}
Let $\{e_k\}$ and $\{\hat e_k\}$ be orthonormal bases for $\cK_1$ and 
$\cK_2$, respectively, and observe that finite sums of rank-1 tensors
of the form $X \coloneq \sum_{i,j}
\alpha_{ij} e_i \otimes \hat e_j \in \cK_1 \otimes \cK_2$ 
can be identified with a bounded linear mapping 
\begin{equation}\label{Tuv-def}
 T_{X}: \cK_2^* \to \cK_1 \quad \text{with} \quad T_{X}(f)
 \coloneq \sum_{i,j} \alpha_{ij}  f(\hat e_j) e_i, \quad \text{for } f \in \cK_2^*.
\end{equation}
For two bounded linear operators $A,B: \cK_2^* \to \cK_1$ we recall
the definition of the Hilbert-Schmidt inner product and norm
\begin{equation*}
 \langle A,B \rangle_{HS} = \sum_{k} \langle A \hat e^*_k, B \hat e^*_k \rangle_{\cK_1},
\qquad 
|A|_{HS} = \langle A,A \rangle_{HS}^{1/2},
\end{equation*}
where $\{\hat e^*_k\}$ is the orthonormal basis of
$\cK_2^*$ satisfying $\hat e^*_k(\hat e_j) = \delta_{jk}$ for all $j,k$ in the
considered index set.
 A bounded linear operator $A:\cK_2^* \to \cK_1$ is called a
Hilbert-Schmidt operator if $|A|_{HS} < \infty$ and $HS(\cK_2^*,
\cK_1)$ is the space of all such operators. In view of \eqref{Tuv-def},
\[
\begin{split}
 |T_{X}|_{HS}^2 &= \sum_{k} \left\langle \sum_{i,j} \alpha_{ij} e^*_k(\hat e_j) e_i, \sum_{i',j'} \alpha_{i'j'} e^*_k(\hat e_{j'}) e_{i'} \right\rangle_{\cK_1} \\
&= \sum_{i,j} |\alpha_{ij}|^2 \\
  &= \|X\|_{\cK_1 \otimes \cK_2}.
\end{split}
\]
By completion, the space $\cK_1 \otimes \cK_2$ is
isometrically isomorphic to $HS(\cK_2^*,\cK_1)$
(and also to $HS(\cK_2,\cK_1)$ by the Riesz representation theorem).
 For an element
$A \in \cK_1\otimes \cK_2$ we identify the norms
\begin{equation*}
 \|A\|_{\cK_1 \otimes \cK_2} = |A|_{HS},
\end{equation*}
and such elements will interchangeably be considered either as members of $\cK_1\otimes \cK_2$
or of $HS(\cK_2^*,\cK_1)$. When viewed as $A \in HS(\cK_2^*,\cK_1)$,
the mapping $A: \cK_2^* \to \cK_1$ is defined by
\[
A f \coloneq \sum_{i,j} A_{ij} f( \hat e_j) e_i, \quad \text{for } f
\in \cK_2^*, 
\]
where $A_{ij} \coloneq \langle e_i, A \hat e_j^* \rangle_{\cK_1}$,
and when viewed as $A \in \cK_1\otimes \cK_2$, we use tensor-basis representation
\[
A = \sum_{i,j} A_{ij} e_i \otimes \hat e_j.
\]

The covariance operator for a pair of random variables $Z, X \in L^2(\Omega, \cHO)$
is denoted by
\begin{equation*}
\mathrm{Cov}[Z,X] \coloneq\E{ (Z-\E{Z}) \otimes (X-\E{X} ) } \in \cHO
\otimes \cHO,
\end{equation*}
and whenever $Z=X$,
we employ the shorthand $\mathrm{Cov}[Z] \coloneq \mathrm{Cov}[Z,Z]$.
For completeness and later reference, let us prove that said covariance belongs to $\cHO
\otimes \cHO$.


\begin{proposition}\label{prop:fincov}
 If $u\in L^2(\Omega,\cHO)$, then  $C:=\mathrm{Cov}[u] \in \cHO \otimes \cHO$.
\end{proposition}
\begin{proof}

  By Jensen's inequality,  
  \[
  \begin{split}
    \|C\|_{\cHO \otimes \cHO} &= \| \E{(u -\E{u})\otimes (u-\E{u})} \|_{\cHO \otimes \cHO}\\
    & \leq \E{\norm{(u-\E{u}) \otimes (u-\E{u})}_{\cHO \otimes \cHO}} \\
    & = \norm{u-\E{u}}_{L^2(\Omega, \cHO)}^2 \\
    &= \|u\|_{L^2(\Omega, \cHO)}^2 - \norm{\E{u}}_{\cHO}^2 < \infty.
    \end{split}
  \]

      
\end{proof}

\subsection{Ensemble Kalman filtering}
\label{subsec:enkf}
EnKF is an ensemble-based extension of Kalman filtering to nonlinear
settings. Let $\{ \hv_{0,i}\}_{i=1}^M$ denote an ensemble of $M$
i.i.d.~particles with $\hv_{0,i} \stackrel{D}{=} u_0$. The initial
distribution $\bbP_{u_0}$ can thus be approximated by the empirical measure
of $\{ \hv_{0,i}\}_{i=1}^M$. By extension, let
$\{ \hv_{n,i}\}_{i=1}^M$ denote the ensemble-based approximation of
the updated distribution $u_n|Y_n$ (at $n=0$ we employ the convention
$Y_0=\emptyset$, so that $u_0=u_0|Y_0$).
Given an updated ensemble $\{ \hv_{n,i}\}_{i=1}^M$, the ensemble-based approximation
of the prediction distribution
$u_{n+1}|Y_n$ is obtained through simulating 
each particle one observation time ahead:
\begin{equation}\label{eq:EnKF}
  v_{n+1,i} = \Psi(\hv_{n,i}), \quad i =1,2,\ldots, M.
\end{equation}
We will refer to $\{\hv_{n+1,i}\}_{i=1}^M$ as the
prediction ensemble at time $n+1$, and we also note that
in many settings, the exact dynamics $\Psi$ in~\eqref{eq:EnKF}
have to be approximated by a numerical solver.

Next, given $\{\hv_{n+1,i}\}_{i=1}^M$ and a new observation $y_{n+1}$,
the ensemble-based approximation of the updated distribution
$u_{n+1}|Y_{n+1}$ is obtained through updating each particle path
 \begin{equation}\label{eq:enkfUp}
 \begin{split}
  \yTilde{n+1,i}	 & = y_{n+1} + \eta_{n+1,i},\\
  \hat{v}_{n+1,i} 
  & = (I - K^{\rm MC}_{n+1}H) v_{n+1,i} + K^{\rm MC}_{n+1} \yTilde{n+1,i}, 
\end{split}
\end{equation}
where $\{ \eta_{n+1,i} \}_{i=1}^M$ is an independent and identically $N(0, \Gamma)-$distributed
sequence, the Kalman gain 
\begin{equation*}
  K^{\rm MC}_{n+1}   = \left( C_{n+1}^{\rm MC} H^* \right) (S^{\rm MC}_{n+1})^{-1}
\label{eq:auxop1}
\end{equation*}
is a function of
\[
S^{\rm MC}_{n+1}            = HC_{n+1}^{\rm MC}  H^* + \Gamma,
\]
the adjoint observation operator $H^* \in L(\bbR^{m}, \cH^*)$,  defined by
\[
(H^*a)(w) = \langle a, Hw \rangle
\quad \text{for all} \quad  a \in \bbR^m \quad \text{and} \quad w \in \cH,
\]
and the prediction covariance  
\[
\begin{split}
  C_{n+1}^{\rm MC}   		& = \cov_M[v_{n+1}] \ ,
  \end{split}
\]
with
\begin{equation}\label{eq:sampleCov}
\cov_M[u_n,v_n] \coloneq \frac{M}{M-1}\parenthesis{E_M[u_n \otimes v_n] - E_M[u_n] \otimes E_M[v_n]}\,
\end{equation}
\begin{equation*}\label{eq:sampleAvg}
E_M[v_{n}] \coloneq \frac{1}{M} \sum_{i=1}^M  v_{n,i} \, 
\end{equation*}
and the shorthand $\cov_M[u_n] \coloneq \cov_M[u_n,u_n]$.

We introduce the following notation for the empirical measure of the
updated ensemble $\{\hv_{n,i}\}_{i=1}^{M}$:
\begin{equation}
\hat \mu^{\rm MC}_n = \frac{1}{M} \sum_{i=1}^{M} \delta_{\hv_{n,i}},
\label{eq:emp}
\end{equation}
and for any QoI 
$\varphi: \cH \rightarrow \bbR$, let 
\[
\hat \mu_n^{\rm MC}[\varphi] := \int \varphi d \hat\mu^{\rm MC}_n = 
 \frac{1}{M}\sum_{i=1}^{M} \varphi(\hv_{n,i}) .
\]

Due to the update formula~\eqref{eq:enkfUp}, all ensemble particles
are correlated to one another after the first update.  Even in the
linear Gaussian case, the ensemble will not remain Gaussian after the
first update.  Nonetheless, it has been shown that the in the
large-ensemble limit, EnKF converges in $L^p(\Omega)$ to the correct
(Bayes-filter) Gaussian in the linear and finite-dimensional case
\cite{mandel2011convergence, le2011large}, with the rate
$\cO(M^{-1/2})$ for Lipschitz-functional QoI with polynomial growth at
infinity.  Furthermore, in the nonlinear cases admitted by
Assumption~\ref{ass:psilip}, EnKF converges in the same sense and with
the same rate to a mean-field limiting distribution described below.

\begin{remark}
The perturbed observations $\tilde y_{n,i}$ were originally introduced
in \cite{burgers1998analysis} to correct the variance-deflation-type
error that appeared in its absence in implementations following the original
formulation of EnKF~\cite{evensen1994sequential}.  It has become known
as the perturbed observation implementation.
\end{remark}

\subsection{Mean-field Ensemble Kalman Filtering}
\label{sec:mfenkf}

In order to describe and study convergence properties of EnKF in the
large-ensemble limit, we now introduce the mean-field EnKF
(MFEnKF)~\cite{law2014deterministic}:
Let $\hmfv_0 \sim \bbP_{u_0}$
and
\begin{equation}
\qquad\qquad\quad\mbox{Predict}\;\left\{\begin{array}{lll}
\mfv_{n+1}& = \Psi (\hmfv_n), \\
\mfm_{n+1}&=\E{\mfv_{n+1}},\\
\mfc_{n+1}&=\E{({\mfv}_{n+1}-\mfm_{n+1})\otimes({\mfv}_{n+1}-\mfm_{n+1})}
 \end{array}\right.
\label{eq:mfpredict}
\end{equation}
\begin{equation}
\mbox{Update}\left\{\begin{array}{llll} \bar S_{n+1}&=H\mfc_{n+1}H^* +\Gamma \\
\mfk_{n+1}&=\mfc_{n+1}H^* \bar S_{n+1}^{-1}\\
{\tilde y}_{n+1}&=y_{n+1}+\eta_{n+1}\\
\hmfv_{n+1}&=(I- \mfk_{n+1}H){\mfv}_{n+1}+\mfk_{n+1}{\tilde y}_{n+1}.\\
\end{array}\right.
\label{eq:mfupdate}
\end{equation}
Here $\eta_n$ are i.i.d.~draws from $N(0,\Gamma).$ In the
finite-dimensional state-space setting, it was shown
in~\cite{le2011large} and~\cite{law2014deterministic} that for
nonlinear state-space models and nonlinear models with additive
Gaussian noise, respectively, EnKF converges to MFEnKF with the
$L^p(\Omega)$ convergence rate $\cO(M^{-1/2})$, as long as the models
satisfy a Lipschitz criterion, similar to (but stronger than)
Assumption~\ref{ass:psilip}.  And in~\cite{ourmlenkf}, we showed for
that MLEnKF converges toward MFEnKF with a higher rate than EnKF does
in said finite-dimensional setting. The
work~\cite{kwiatkowski2014convergence} extended convergence results to
\hh{infinite-dimensional} state space for square-root filters.  In
this work, the aim is to prove convergence of the MLEnKF for
infinite-dimensional state space, with the same favorable asymptotic
cost-to-accuracy \hh{performance} as in~\cite{ourmlenkf}.

The following $L^p$-boundedness properties ensures the existence of
the MFEnKF-process and its mean-field Kalman gain, and they will be needed
when studying the properteis of MLEnKF:

\begin{proposition}\label{prop:reg}
  Assume the initial data of the hidden Markov
  model~\eqref{eq:psiDefinition} and~\eqref{eq:obdef} satisfies
  $u_0 \in \cap_{p\ge 2} L^p(\Omega,\cHO)$.  Then the MFEnKF process
  \eqref{eq:mfpredict}--\eqref{eq:mfupdate} satisfies
  $\bar{v}_n, \hat{\bar{v}}_n \in \cap_{p \ge2} L^p(\Omega,\cHO)$ and
  $\|\mfk_n\|_{L(\bbR^{m},\cHO)} < \infty$ for all $n \in \bbN$.
\end{proposition}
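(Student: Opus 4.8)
The plan is to proceed by induction on the time index $n$, carrying the single hypothesis $\hmfv_n \in \cap_{p\ge 2} L^p(\Omega,\cHO)$ through one complete predict--update cycle; the boundedness of the gain $\mfk_n$ will be established as a by-product of the cycle. The base case is immediate from the initialisation $\hmfv_0 = u_0$, which lies in $\cap_{p\ge 2}L^p(\Omega,\cHO)$ by hypothesis.

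For the inductive step, suppose $\hmfv_n \in \cap_{p\ge 2}L^p(\Omega,\cHO)$. In the prediction substep I would apply the \emph{mapping property} $\Psi : L^p(\Omega,\cHO) \to L^p(\Omega,\cHO)$ of Assumption~\ref{ass:psilip} to obtain $\mfv_{n+1} = \Psi(\hmfv_n) \in \cap_{p\ge 2}L^p(\Omega,\cHO)$ directly. It is worth emphasising that this qualitative mapping statement, rather than the Lipschitz and growth bounds (i)--(ii) --- which are posed only in the weaker norm $\|\cdot\|_{\cH}$ --- is what delivers membership in the stronger space $\cHO$. Since in particular $\mfv_{n+1}\in L^2(\Omega,\cHO)$, Proposition~\ref{prop:fincov} then gives $\mfc_{n+1} = \cov[\mfv_{n+1}] \in \cHO\otimes\cHO$ with finite norm, while $\mfm_{n+1} = \E{\mfv_{n+1}}\in\cHO$ by Jensen's inequality.

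The crux is the bound $\|\mfk_{n+1}\|_{L(\cR_m,\cHO)}<\infty$. First, $\bar S_{n+1} = H\mfc_{n+1}H^* + \Gamma$ is symmetric positive definite, since $H\mfc_{n+1}H^* = \cov[H\mfv_{n+1}]$ is positive semi-definite and $\Gamma$ is positive definite; hence $\bar S_{n+1}^{-1}\in L(\cR_m,\cR_m)$ with $\|\bar S_{n+1}^{-1}\|_{\rm op}\le\|\Gamma^{-1}\|_{\rm op}$. Next, identifying $\mfc_{n+1}\in\cHO\otimes\cHO$ with the Hilbert--Schmidt operator $\mfc_{n+1}:\cHO^*\to\cHO$ --- so that $\|\mfc_{n+1}\|_{\rm op}\le|\mfc_{n+1}|_{HS} = \|\mfc_{n+1}\|_{\cHO\otimes\cHO}$ --- and using the continuous dual embedding $\cH^*\hookrightarrow\cHO^*$ induced by $\cHO\hookrightarrow\cH$, the adjoint $H^*\in L(\cR_m,\cH^*)$ composes with $\mfc_{n+1}$ to yield $\mfc_{n+1}H^*\in L(\cR_m,\cHO)$ of finite norm. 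Composing with the bounded $\bar S_{n+1}^{-1}$ gives $\mfk_{n+1} = (\mfc_{n+1}H^*)\bar S_{n+1}^{-1}\in L(\cR_m,\cHO)$, as claimed.

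Finally, for the update $\hmfv_{n+1} = (I-\mfk_{n+1}H)\mfv_{n+1} + \mfk_{n+1}\tilde y_{n+1}$ I would note that $\mfk_{n+1}$ is deterministic and bounded into $\cHO$, that $H$ restricts to a bounded map $\cHO\to\cR_m$ through $\cHO\hookrightarrow\cH$, and that $\tilde y_{n+1} = y_{n+1}+\eta_{n+1}\in L^p(\Omega,\cR_m)$ for every $p$, with $\eta_{n+1}$ a finite-dimensional Gaussian and $y_{n+1}$ a fixed (or $L^p$-integrable) observation. Each of the three summands then lies in $L^p(\Omega,\cHO)$, giving $\hmfv_{n+1}\in\cap_{p\ge2}L^p(\Omega,\cHO)$ and closing the induction. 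The main obstacle is precisely the functional-analytic bookkeeping of the previous paragraph: one must maintain membership in the stronger space $\cHO$ even though the forward-map estimates live only in $\cH$, and correctly chain the duality $\cH^*\hookrightarrow\cHO^*$ with the Hilbert--Schmidt mapping property of $\mfc_{n+1}$ from Proposition~\ref{prop:fincov} so that $\mfk_{n+1}$ lands boundedly in $\cHO$.
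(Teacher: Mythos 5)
Your proposal is correct and follows essentially the same route as the paper's own proof: induction on $n$, the qualitative mapping property of Assumption~\ref{ass:psilip} into $L^p(\Omega,\cHO)$ for the prediction step, Proposition~\ref{prop:fincov} for $\mfc_{n+1}\in\cHO\otimes\cHO$, the bound $\|\bar S_{n+1}^{-1}\|\le\|\Gamma^{-1}\|$ together with the dual embedding $\cH^*\subset\cHO^*$ and the Hilbert--Schmidt identification of $\mfc_{n+1}$ to control $\mfk_{n+1}$ in $L(\cR_m,\cHO)$, and the triangle inequality for the update. The only (immaterial) difference is the order of composition in the gain estimate: you bound $\mfc_{n+1}H^*$ first, whereas the paper bounds $H^*\bar S_{n+1}^{-1}$ first.
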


\begin{proof}
Since $\hat{\bar{v}}_0 = u_0$, the property clearly holds for $n=0$.
Given $\hat{\bar{v}}_n\in L^p(\Omega,\cHO)$, Assumption \ref{ass:psilip}
guarantees $\bar{v}_{n+1} \in L^p(\Omega,\cHO)$.
By Proposition \ref{prop:fincov}, $\mfc_{n+1} \in \cHO\otimes \cHO$.
Since $H\mfc_{n+1}H^* \geq 0$ and $\Gamma>0$, it follows that
$H^*\bar S_{n+1}^{-1} \in L(\bbR^{m}, \cH^*)$ as
\[
\begin{split}
\|H^* \bar S_{n+1}^{-1}\|_{ L(\bbR^{m}, \cH^*)}
&\le \|H^*\|_{ L(\bbR^{m}, \cH^*)} |\bar S_{n+1}^{-1}|\\
&\le \|H\|_{L(\cH,\bbR^{m})} |\Gamma^{-1}| \\
&< \infty.
\end{split}
\]
Furthermore, since $\cH^* \subset \cHO^*$ it also holds that
$\|H^* \bar S_{n+1}^{-1}\|_{L(\bbR^{m},\cHO^*)} < \infty$ and 
\[
\begin{split}
\|\mfk_{n+1}\|_{L(\bbR^{m},\cHO)} &\le \|\mfc_{n+1}\|_{L(\cHO^*,\cHO)} \|H^* \bar S_{n+1}^{-1}\|_{L(\bbR^{m},\cHO^*)}\\
&\le \|\mfc_{n+1}\|_{\cHO \otimes \cHO} \|H^* \bar S_{n+1}^{-1}\|_{L(\bbR^{m},\cHO^*)}\\
&< \infty.
\end{split}
\]
The result follows by recalling that $\cHO \subset \cH$ and by the triangle inequality:
\begin{align*}
  \|\hat{\bar{v}}_{n+1}\|_{L^p(\Omega,\cHO)} &\le
  \|\bar{v}_{n+1}\|_{L^p(\Omega,\cHO)}\\
  & \quad + \|\mfk_{n+1}\|_{L(\bbR^{m},\cHO)}\prt{\|H \bar{v}_{n+1}\|_{L^{p}(\Omega,\bbR^{m})} +  \|\tilde y_{n+1}\|_{L^p(\Omega, \bbR^{m})}} \\
&< \infty.
\end{align*}
\end{proof}

We conclude this section with some remarks on tensorized
representations of the Kalman gain and related auxiliary operators
that will be useful when developing MLEnKF algorithms
in Section~\ref{subsec:mlenkfAlg}.
 
\subsubsection{The Kalman gain and auxiliary operators}
Introducing complete orthonormal bases $\{e_i\}_{i=1}^m$ for $\bbR^{m}$,
and $\{\phi_j\}$ for $\cH$, it follows that $H \in L(\cH, \bbR^{m})$ can
be written 
\[
H = \sum_{i=1}^m\sum_{j=1}^\infty H_{ij} e_i \otimes \phi_j^*
\]
with $H_{ij} \coloneq \langle e_i , H \phi_j\rangle$. And since 
$\|H\|_{L(\cH,\bbR^{m})} < \infty$,
it holds that
\[
\sum_{j=1}^\infty H_{ij} \phi_j^* \in \cH^*, \quad \text{for all } i \in \{1,2,\ldots,m\}.
\]
For the covariance matrix, it holds almost surely that
$\covHatMC{n+1} \in  \cHO \otimes \cHO \subset \cH \otimes \cH$,
so it may be represented by
\[
\covHatMC{n+1}= \sum_{i,j=1}^\infty C_{n+1,ij}^{{\rm MC}} \phi_i \otimes \phi_j,
\quad \text{where} \quad 
C_{n+1,ij}^{{\rm MC}} \coloneq \langle \phi_i,\covHatMC{n+1} \phi_j^*\rangle_{\cH}.
\]
For the auxiliary operator, it holds almost surely that
\[
R^{\rm MC}_{n+1} \coloneq C_{n+1}^{\rm MC} H^* \in L(\bbR^{m}, \cHO),
\]
so it can be represented by
\begin{equation*}\label{eq:arrr}
R^{\rm MC}_{n+1} = \sum_{i=1}^m \sum_{j=1}^\infty R^{{\rm MC}}_{n+1,ij} \phi_i \otimes e_j,
\quad \text{where} \quad 
R^{{\rm MC}}_{n+1,ij}= \sum_{k=1}^\infty C_{n+1,ik}^{{\rm MC}} H_{jk}.
\end{equation*}
Lastly, since $(S^{\rm MC})^{-1} \in L(\bbR^{m},\bbR^{m})$
and $K^{{\rm MC}} \in L(\bbR^{m},V)$ almost surely,
it holds that
\begin{equation*}
 S^{\rm MC}_{n+1,ij} = \left( \sum_{k=1}^\infty H_{ik} R^{{\rm MC}}_{n+1,kj}\right) + \Gamma_{ij} \text{ and }   
 K^{{\rm MC}}_{n+1,ij}   = \sum_{k=1}^m R^{{\rm MC}}_{n+1,ik} \left((S^{\rm MC}_{n+1})^{-1}\right)_{kj}.
\label{eq:auxop2}
\end{equation*}

\section{Multilevel EnKF}
\label{sec:mlmf}

\subsection{Notation and assumptions}
\label{subsec:mlenkf}
Recall that $\{\phi_k\}_{k=1}^{\infty}$ represents a complete orthonormal
basis for $\cH$ and consider the hierarchy of subspaces
$\cH_\ell = {\rm span} \{\phi_k\}_{k=1}^{N_\ell}$, {where $\{N_\ell\}$
  is an exponentially increasing sequence of natural numbers further
  described below in Assumption~\ref{ass:mlrates}}.  By construction,
$\cH_0 \subset \cH_1 \subset \dots \subset \cH$.
We define a sequence of orthogonal
projection operators $\{\cP_\ell: \cH \to \cH_\ell\}$ by
\begin{equation*}\label{eq:projDef}
  \cP_\ell v \coloneq \sum_{j=1}^{N_\ell} \langle \phi_j, v\rangle_{\cH} \phi_j \in \cH_\ell.
 \end{equation*}
 It trivially follows that $\cH_\ell$ is isometrically isomorphic to
 $\bbR^{N_\ell}$, so that any element $v^\ell \in \cH_\ell$ will, when
 convenient, be viewed as the unique corresponding element of
 $\bbR^{N_\ell}$ whose $k$-th component is given by
 $\langle \phi_k, v^\ell \rangle_{\cH}$ for
 $k \in \{1,2,\ldots,N_\ell\}$.
For the practical construction of numerical methods, we also 
introduce a second sequence of projection operators
$\{\Pi_\ell: \cH \to \cH_\ell\}$, e.g., interpolant operators,
which are assumed to be close to the corresponding orthogonal
projectors and to 
satisfy the constraint $\Pi_\ell \cH = \cP_\ell \cH = \cH_\ell$.
This framework can accommodate spectral methods, for which 
typically $\Pi_\ell = \cP_\ell$, as well as finite element type
approximations, for which $\Pi_\ell$ more commonly will be 
taken as an interpolant operator. In the latter case, the basis
$\{\phi_j\}$ will be a hierarchical finite element basis,
cf.~\cite{urban2012,brenner2008}.

We now introduce two additional assumptions on the hierarchy of
dynamics and two assumptions on the projection operators that will be
needed in order to prove the convergence of MLEnKF and its superior
efficiency compared to EnKF.  For two non-negative sequences
$\{f_\ell\}$ and $\{g_\ell\}$, the notation $f_\ell \lesssim g_\ell$
means there exist a constant $c>0$ such that $f_\ell \leq c
g_\ell$ holds for all $\ell \in \mathbb{N} \cup \{0\}$, and the
notation $f_\ell \eqsim g_\ell$ means that both $f_\ell \lesssim
g_\ell$ and $g_\ell \lesssim f_\ell$ are true.
\begin{assumption} 
  \label{ass:mlrates}
  Assume the initial data of the hidden Markov
  model~\eqref{eq:psiDefinition} and~\eqref{eq:obdef} satisfies $u_0
  \in \cap_{p\ge 2}L^p(\Omega,\cHO)$.  Consider a hierarchy of
  solution operators $\{\psiL: L^p(\Omega,\cH)\times \Omega \to
  L^p(\Omega,\cH_\ell) \}$ for which Assumption~\ref{ass:psilip} holds
  that are associated to a hierarchy of subspaces $\{\cH_\ell\}$ with
  resolution dimension $N_\ell \eqsim \kappa^\ell$ for some $\kappa
  >1$. Let $h_\ell \eqsim N_\ell^{-1/d}$ and $\Delta t_\ell \eqsim
  h_\ell^{\gamma_t}$, for some $\gamma_t>0$, respectively denote the
  spatial and the temporal resolution parameter on level $\ell$. For a
  given set of exponent rates $\beta, \gamma_x, \gamma_t>0$, the
  following conditions are fulfilled:
\begin{enumerate}

\item[(i)] $\| \psiL(u) - \Psi(u) \|_{L^p(\Omega, \cH)} \lesssim
    (1+\|u\|_{L^p(\Omega, \cHO)}) h_\ell^{\beta/2}$, \\ 
\text{ for all }$p\ge 2$ \text{ and } $u\in \cap_{p\ge 2}L^p(\Omega, \cHO)$,

\item[(ii)] for all $u \in \cHO$,
\[
\|(I- \cP_\ell) u \|_{\cH} \lesssim \|u\|_{\cHO} h_\ell^{\beta/2}
\quad \text{and} \quad 
\|(\Pi_\ell - \cP_\ell)u\|_{\cH} \lesssim \|u\|_{\cHO}  h^{\beta/2}_\ell,
\]
  
\item[(iii)] 
  the computational cost of applying $\Pi_\ell$ to any element of
  $\cH$ is $\cO(N_\ell)$ and that of applying
  $\psiL$ to any element of $\cH$ is 
  \[
  \text{Cost}(\psiL) \eqsim h_\ell^{-(d\gamma_x + \gamma_t)},
  \]
  where $d$ denotes the dimension of the spatial domain of elements in $\cH$,
  and $d\gamma_x + \gamma_t \ge d$.

 \end{enumerate}
\end{assumption}

\subsection{The MLEnKF method}

MLEnKF computes particle paths on a hierarchy of finite-dimensional function spaces with
accuracy levels determined by the solvers
$\{\psiL: L^p(\Omega,\cH)\times \Omega \to L^p(\Omega,\cH_\ell)
\}$.  
Let $v^{\ell}_{n}$ and $\hv^\ell_{n}$ respectively represent
prediction and updated ensemble state at time $n$ of a particle on
resolution level $\ell$, i.e., with dynamics governed by $\Psi^\ell$.
For an ensemble-size sequence $\{M_\ell\}_{\ell=0}^L \subset \bbN\setminus\{1\}$
that is further described in~\eqref{eq:chooseMlr}, the initial setup for MLEnKF
consists of a hierarchy of ensembles $\{\hv^{0}_{0,i}\}_{i=1}^{M_0}$
and
$\{(\hv^{\ell-1}_{0,i},
\hv^{\ell}_{0,i})_{i=1}^{M_\ell}\}_{\ell=1}^L$.  For $\ell=0$,
$\{\hv^{0}_{0,i}\}_{i=1}^{M_0}$ is a sequence of i.i.d. random
variables with $\hv^{0}_{0,i} \sim \bbP_{\Pi_0 u_0}$, and for
$\ell\ge1$,
$\{(\hv^{\ell}_{0,i}, \hv^{\ell-1}_{0,i})\}_{i=1}^{M_\ell}$ is a
sequence of i.i.d.~random variable 2-tuples with
$\hv^{\ell}_{0,i} \sim \bbP_{\Pi_\ell u_0}$ and pairwise coupling through
$\hv^{\ell-1}_{0,i} = \Pi_{\ell-1} \hv^{\ell}_{0,i}$.
MLEnKF approximates the
initial reference distribution $\bbP_{u_0|Y_0}$ (recalling the convention
$Y_0 = \emptyset$, so that $u_0|Y_0 = u_0$) by the multilevel-Monte-Carlo-based and signed empirical measure
\[
\hat \mu^{\rm ML}_0 = \frac{1}{M_0} \sum_{i=1}^{M_0} \delta_{
  \hv^{0}_{0,i}} + 
\sum_{\ell=1}^{L} \frac{1}{M_\ell} 
\sum_{i=1}^{M_\ell}( \delta_{\hv^{\ell}_{0,i}} -\delta_{\hv^{\ell-1}_{0,i}} ).
\]
Similar to EnKF, the mapping
\[
\{ (\hv^{0}_{n,i})_{i=1}^{M_0}, \{(\hv^{\ell-1}_{n,i}, \hv^{\ell}_{n,i})_{i=1}^{M_\ell}\}_{\ell=1}^L\}
\mapsto
\{ (\hv^{0}_{n+1,i})_{i=1}^{M_0}, \{(\hv^{\ell-1}_{n+1,i}, \hv^{\ell}_{n+1,i})_{i=1}^{M_\ell}\}_{\ell=1}^L\}
\]
represents the transition of the MLEnKF hierarchy of ensembles over
one prediction-update step and
\[
\hat \mu^{\rm ML}_0 = \frac{1}{M_0} \sum_{i=1}^{M_0} \delta_{
  \hv^{0}_{n,i}} + 
\sum_{\ell=1}^{L} \frac{1}{M_\ell} 
\sum_{i=1}^{M_\ell}( \delta_{\hv^{\ell}_{n,i}} -\delta_{\hv^{\ell-1}_{n,i}} ),
\]
represents the empirical distribution of the updated MLEnKF at time $n$. 
The MLEnKF prediction step consists of
simulating all particle paths on all resolution one observation-time
forward:
\begin{equation*}
  v^{0}_{n+1,i} = \Psi^{0}(\hv^{0}_{n,i},\omega_{0,i}), 
\end{equation*}
for $\ell =0$ and $i=1,2,\ldots, M_0$, and the pairwise coupling
\begin{equation}\label{eq:DlvHatDef}
\begin{split}
  v^{\ell-1}_{n+1,i}  	& = \Psi^{\ell-1}(\hv^{\ell-1}_{n,i},\omegaLI),\\
  v^{\ell}_{n+1,i} 	& = \Psi^\ell(\hv^{\ell}_{n,i}, \omegaLI),
  \end{split}
\end{equation}
for $\ell =1,\ldots,L$ and $i = 1,2, \ldots, M_\ell$.
Note here that the driving noise in the second
argument of the dynamics $\Psi^{\ell-1}$ and $\Psi^{\ell}$ is pairwise
coupled, and otherwise independent.
For the update step, 
the MLEnKF prediction covariance matrix is given
by the following multilevel sample-covariance estimator
\begin{equation}
\label{eq:mlSampleMoments}
\begin{split}
\cml_{n+1}  &= \sum_{\ell=0}^L \cov_{M_\ell}[ v^{\ell}_{n+1}]
- \cov_{M_\ell}[ v^{\ell-1}_{n+1}],
\end{split}
\end{equation}
and the multilevel Kalman gain is defined by
\begin{equation}
 \kml{n+1}  = \cml_{n+1} H^* 
 (S^{\rm ML}_{n+1})^{-1}, \text{ where } 
  S^{\rm ML}_{n+1}  := (H \cml_{n+1} H^*)^+    + \Gamma,
\label{eq:newkay}
\end{equation}
where
\begin{equation}\label{eq:covzee}
(H \cml_{n+1} H^*)^+ := \sum_{ \substack{ i=1\\ \lambda_i\geq 0} }^m \lambda_i q_i q_i^\transpose,
\end{equation}
with $(\lambda_j,q_j)_{j=1}^m$ denoting the eigenpairs of $H \cml_{n+1} H^* \in \bbR^{m\times m}$.
The new observation $y_{n+1}$ is assimilated into the hierarchy of ensembles
by the following multilevel extension of EnKF at the zeroth level:
\begin{equation*}\label{eq:upsamps0}
\begin{split}
  \yTildeL{0}{n+1,i}		& = y_{n+1} + \eta^0_{n+1,i} \\
  \hv^{0}_{n+1,i}  & =     (I - \Pi_0 \kml{n+1} H ) \vHatL{0}{n+1,i} + 
  \Pi_{0} \kml{n+1} \yTildeL{0}{n+1,i},  
  \end{split}
\end{equation*}
for $i=1,2,\ldots, M_0$ with $\{\eta^0_{n+1,i}\}_{i=1}^{M_0}$ ,
and for each of the higher levels, $\ell=1,\ldots,L$, the pairwise coupling of perturbed observations
\begin{equation}\label{eq:upsamps}
\begin{split}
  \yTildeL{\ell}{n+1,i}		& = y_{n+1} + \etaL_{n+1,i} \\
  \hv^{\ell-1}_{n+1,i}  & =     (I - \Pi_{\ell-1} \kml{n+1} H ) \vHatL{\ell-1}{n+1,i} + 
  \Pi_{\ell-1} \kml{n+1} \yTildeL{\ell}{n+1,i},  \\
  \hv^{\ell}_{n+1,i}  & =     (I - \Pi_\ell \kml{n+1} H ) \vHatL{\ell}{n+1,i} + 
  \Pi_{\ell} \kml{n+1} \yTildeL{\ell}{n+1,i},  
  \end{split}
\end{equation}
for $i=1,\ldots,M_\ell$, with the sequence
$\{\etaL_{n+1,i}\}_{i,\ell}$ being independent and identically
$N(0, \Gamma)-$distributed. It is precisely the multiplication with
the Kalman gain in the update step that correlates all the MLEnKF
particles. In comparison to standard MLMC where all samples
except the pairwise coupled ones are independent, the
this global correlation in MLEnKF substantially complicates
the convergence analysis of the method.
\begin{remark}
  Although unlikely, the multilevel sample prediction covariance
  $\cml_{n+1}$ may have negative eigenvalues and, worst case, this could
  lead to $S^{\rm ML}_{n+1}= H \cml_{n+1} H^* + \Gamma$ becoming a singular
  matrix. The impetus for replacing the matrix $(H \cml_{n+1} H^*)$
  with its positive semidefinite ``counterpart''
  $(H \cml_{n+1} H^*)^+$ in the Kalman gain formula~\eqref{eq:newkay}
  is to ensure that $S^{\rm ML}_{n+1}$ is invertible and to obtain the
  bound $|(S^{\rm ML}_{n+1})^{-1}| \le |\Gamma^{-1}|$.
\end{remark}

The following notation denotes the (signed) empirical measure of the multilevel
ensemble $\{(\hv^{\ell-1}_{n,i},\hv^{\ell}_{n,i})_{i=1}^{M_\ell}\}_{\ell=0}^L$:
\begin{equation}
\hat \mu^{\rm ML}_n = \frac{1}{M_0} \sum_{i=1}^{M_0} \delta_{
  \hv^{0}_{n,i}} + 
\sum_{\ell=1}^{L} \frac{1}{M_\ell} 
\sum_{i=1}^{M_\ell}( \delta_{\hv^{\ell}_{n,i}} -\delta_{\hv^{\ell-1}_{n,i}} ),
\label{eq:mlemp}
\end{equation}
and for any QoI $\varphi: \cH \rightarrow \bbR$, let 
\[
\hat \mu_n^{\rm ML}[\varphi] := \int \varphi d\hat \mu^{\rm ML}_n = 
\sum_{\ell=0}^{L}  \frac{1}{M_\ell}\sum_{i=1}^{M_\ell} 
\prt{\varphi(\hv^{\ell}_{n,i}) - \varphi(\hv^{\ell-1}_{n,i})}.
\]

We conclude this section with an estimate that relates to the
computational cost of one MLEnKF update step.
\begin{proposition}\label{prop:covcost}
  Given an MLEnKF hierarchy of prediction ensembles
  \[
    \{ (v^{0}_{n+1,i})_{i=1}^{M_0}, \{(v^{\ell-1}_{n+1,i}, v^{\ell}_{n+1,i})_{i=1}^{M_\ell}\}_{\ell=1}^L\},
    \]
  the cost of constructing the multilevel Kalman Gain $\kml{n+1}$ is proportional to
  $\sum_{\ell=0}^L m N_\ell M_\ell$. And if Assumption~\ref{ass:mlrates}(iii) holds,
  then the cost of updating the $\ell$-th level ensemble
  \[
    (v^{\ell-1}_{n+1,i}, v^{\ell}_{n+1,i})_{i=1}^{M_\ell} \mapsto
    (v^{\ell-1}_{n+1,i},\hv^{\ell}_{n+1,i})_{i=1}^{M_\ell}
  \]
  by~\eqref{eq:upsamps} is proportional to $m N_\ell M_\ell$.
\end{proposition}	
	
\begin{proof}
Notice that it is not required to compute the full multilevel prediction covariance $C^{\rm ML}_{n+1}$ 
in order to build the MLEnKF Kalman gain, but rather only 
\begin{equation}\label{eq:rMLExpression}
R^{\rm ML}_{n+1} := \cml_{n+1} H^* = \sum_{\ell=0}^L \Big( \cov_{M_\ell}[ v^{\ell}_{n+1}, 
H v^{\ell}_{n+1}]
- \cov_{M_\ell}[ v^{\ell-1}_{n+1}, H v^{\ell-1}_{n+1}] \Big).
\end{equation}
(The advantage of storing $R^{\rm ML}_{n+1} \in \bbR^{N_L \times m}$ rather than
$C^{\rm ML}_{n+1} \in \bbR^{N_L \times N_L}$ is the dimensional reduction obtained for large $L$,
since then $N_L \gg m$.)

For the Kalman gain, the cost of computing
$\cov_{M_\ell}[ v^{\ell}_{n+1}, H v^{\ell}_{n+1}] \in \bbR^{N_\ell
  \times m}$, is proportional to $m N_\ell M_\ell$.
There are also the insignificant one-time costs of
constructing and inverting $S^{\rm ML}_{n+1}$, and
the matrix multiplication $R^{\rm ML}_{n+1}(S^{\rm ML}_{n+1})^{-1}$.
In total, these costs are proportional to $N_Lm^2$.

The cost of updating the $\ell$-th level ensemble by~\eqref{eq:upsamps}
contains the one-time cost of the matrix multiplications $\Pi_\ell \kml{n+1}$
which by Assumption~\ref{ass:mlrates}(iii) is proportional to $mN_\ell$.
For each particle, the cost of computing $H \vHatL{\ell}{n+1,i}$ is
proportional to $N_\ell$, since $\vHatL{\ell}{n+1,i} \in \cH_\ell$, and
the cost of computing $(\Pi_\ell \kml{n+1})(H \vHatL{\ell}{n+1,i})$
and $(\Pi_\ell \kml{n+1})\yTildeL{\ell}{n+1,i}$ are both proportional
to $mN_\ell$.

\end{proof}

\subsection{MLEnKF algorithms}
\label{subsec:mlenkfAlg}

A subtlety with computing~\eqref{eq:rMLExpression} efficiently is that
the summands will be elements of different sized tensor spaces
since 
\[
\cov_{M_\ell}[ v^{\ell-1}_{n}, H v^{\ell-1}_{n} ]\in
\bbR^{N_{\ell-1} \times m}
\quad \text{while} \quad 
\cov_{M_\ell}[ v^{\ell}_{n}, H v^{\ell}_{n}] \in \bbR^{N_{\ell} \times m}
\]
for $\ell =1,2,\ldots,L$. The algorithm presented below efficiently
computes~\eqref{eq:rMLExpression} through performing all arithmetic
operations in the tensor space of lowest possible dimension available
at the current stage of computations. When
Proposition~\ref{prop:covcost} applies, the computational cost of the
algorithm is~\mbox{$\cO(m\sum_{\ell=0}^L M_\ell N_\ell)$}.  For ease of
exposition, we will in the sequel employ the convention
$v^{-1}_{n,i} = \hv^{-1}_{n,i} =0$ for all $n,i$.

\begin{algorithm}[H]
  \caption{Computing the auxiliary variable $R^{\rm ML}_{n}$}
  \begin{algorithmic}\label{alg:rAlgorithm}
  \STATE{{\bf Input:} Observation operator \mbox{$H \in L(\cV,\bbR^{m})$} and prediction ensemble $\{(v^{\ell-1}_{n,i},v^{\ell}_{n,i})_{i=1}^{M_\ell}\}_{\ell=0}^L$.}
  \STATE{{\bf Output:} $R^{\rm ML}_{n}$.}
  \STATE{Initialize $R^{\rm ML}_{n} = 0 \in \bbR^{N_L \times m}$.}
  \FOR{$\ell =0$ \TO $L-1$}
  \STATE{Update the submatrix $R^{\rm ML}_n(1:N_\ell, :) \in \bbR^{N_\ell \times m}$ consisting of
    the $N_\ell$ first rows and all columns of $R^{\rm ML}_n$ as follows:
  \[
    R^{\rm ML}_n(1:N_\ell, :) = R^{\rm ML}_n(1:N_\ell, :)
    + \cov_{M_\ell}[ v^{\ell}_{n}, H v^{\ell}_{n} ]- \cov_{M_{\ell+1}}[ v^{\ell}_{n}, H v^{\ell}_{n}].
  \]}
  \ENDFOR
  \STATE{Lastly, add finest level sample covariance:
    \[
    R^{\rm ML}_n = R^{\rm ML}_n + \cov_{M_L}[ v^{L}_{n}, H v^{L}_{n}].
    \]
    }
  \RETURN $R^{\rm ML}_n$.
  \end{algorithmic}
\end{algorithm}

In Algorithm~\ref{alg:mlenkf}, we summarize
the main steps for one predict-update iteration of the MLEnKF method.

\begin{algorithm}[H]
  \caption{MLEnKF predict-update iteration}
  \begin{algorithmic}\label{alg:mlenkf}
    \STATE{{\bf Input:} Hierarchy of projection operators $\{\Pi_\ell: \cH \to \cH_\ell\}$, observation $H \in L(\cV,\bbR^{m})$,
      observation noise covariance matrix $\Gamma$, and multilevel update ensemble 
      $\{(\hv^{\ell-1}_{n,i},\hv^{\ell}_{n,i})_{i=1}^{M_\ell}\}_{\ell=0}^L$.}
  \STATE{{\bf Output:} multilevel update ensemble 
      $\{(\hv^{\ell-1}_{n+1,i},\hv^{\ell}_{n+1,i})_{i=1}^{M_\ell}\}_{\ell=0}^L$.}
  \STATE{{\bf Predict:}}
  \FOR{$\ell =0$ \TO $L$}
  \FOR{$i =0$ \TO $M_\ell$}
  \STATE{Compute particle pair paths $(v^{\ell-1}_{n+1,i}, v^{\ell}_{n+1,i})$
    according to~\eqref{eq:DlvHatDef}.}
  \ENDFOR
  \ENDFOR
  \STATE{{\bf Update:}}
  \STATE{Compute $R^{\rm ML}_{n+1}$ by Algorithm~\ref{alg:rAlgorithm}, and $K^{\rm ML}_{n+1}$ by~\eqref{eq:newkay} (using that $R^{\rm ML}_{n+1} = \cml_{n+1} H^*$).}
  \FOR{$\ell =0$ \TO $L$}
  \FOR{$i =0$ \TO $M_\ell$}
  \STATE{Generate the perturbed observation $\tilde y^\ell_{n+1,i}$ and update the particle
    pair $(\hv^{\ell-1}_{n+1,i},\hv^{\ell}_{n+1,i})$ by~\eqref{eq:upsamps}.}
  \ENDFOR
  \ENDFOR
  \RETURN $\{(\hv^{\ell-1}_{n+1,i},\hv^{\ell}_{n+1,i})_{i=1}^{M_\ell}\}_{\ell=0}^L$.
  \end{algorithmic}
\end{algorithm}



\section{Theoretical Results}
\label{sec:theory}

In this section we derive theoretical results on the approximation
error and computational cost of weakly approximating the MFEnKF
filtering distribution by MLEnKF. We begin by stating the main
theorem of this paper. It gives an upper
bound for the computational cost of achieving a sought accuracy in
$L^p(\Omega)$-norm when using the MLEnKF method to approximate
the expectation of a QoI.  The theorem may be considered an
extension to spatially extended models of the earlier work
\cite{ourmlenkf}.  

\begin{theorem}[MLEnKF accuracy vs.~cost]\label{thm:main}
Consider a Lipschitz continuous QoI 
$\varphi:\cH \to \bbR$ and suppose
Assumption~\ref{ass:mlrates} holds. For a given
$\varepsilon >0$, let $L$ and $\{M_\ell\}_{\ell=0}^L$ be defined under the
constraints $L = \lceil 2\kl{d}\log_\kappa(\varepsilon^{-1})/\beta\rceil$ and
\begin{equation}\label{eq:chooseMlr}
\kl{M_\ell \eqsim  
 \begin{cases} 
   h_\ell^{(\beta+d\gamma_x+\gamma_t)/2}  h^{-\beta}_L, & \text{if} \quad \beta >
   d\gamma_x +\gamma_t, \\
   h_\ell^{(\beta+d\gamma_x +\gamma_t)/2}  L^2 h^{-\beta}_L, & \text{if} \quad  \beta = d\gamma_x +\gamma_t, \\
   h_\ell^{(\beta + d\gamma_x +\gamma_t)/2} h^{-(\beta + d\gamma_x +\gamma_t)/2}_L, &\text{if} \quad \beta< d\gamma_x +\gamma_t. 
 \end{cases}}
\end{equation}
Then, {for any $p\ge 2$ and $n \in \bbN$},
\begin{equation}
\|\hat \mu^{\rm ML}_n [\varphi] - \hat \mfmu_n [\varphi] \|_{L^p(\Omega)} \lesssim |\log(\varepsilon)|^n\varepsilon,
\label{eq:lperror}
\end{equation}
where we recall that $\hat \mu^{\rm ML}_n$ denotes the MLEnKF empirical measure~\eqref{eq:mlemp},
and $\hat \mfmu_n$ denotes the mean-field EnKF distribution at time $n$ (meaning
$\hmfv_{n} \sim \hat \mfmu_n$).

The computational cost of the MLEnKF estimator
\[
\cost{\hat \mu^{\rm ML}_n [\varphi]}:= \sum_{\ell=0}^L M_\ell \cost{\Psi^\ell}
\]
satisfies 
\kl{
\begin{equation*}\label{eq:mlenkfCosts2}
\cost{\hat \mu^{\rm ML}_n [\varphi]} \eqsim 
\begin{cases}
\varepsilon^{-2}, & \text{if} \quad \beta > d\gamma_x +\gamma_t,\\              
\varepsilon^{-2} \abs{\log(\varepsilon)}^3, & \text{if} \quad \beta = d\gamma_x +\gamma_t,\\
\varepsilon^{- 2(d\gamma_x +\gamma_t)/\beta}, & \text{if} \quad \beta < d\gamma_x+\gamma_t.
\end{cases}
\end{equation*}
}
\end{theorem}
The proof of this theorem is presented at the end of this section, and
it depends upon the intermediary results presented prior to the proof.
\begin{remark}
  The constraint $d\gamma_x +\gamma_t \ge d$ in
  Assumption~\ref{ass:mlrates}(iii) was imposed to ensure that the
  computational cost of the forward simulation,
  Cost$(\Psi^\ell) \eqsim h_\ell^{-(d\gamma_x +\gamma_t) }$, is either
  linear or superlinear in $N_\ell$.  In view of Proposition
  \ref{prop:covcost}, the share of the total cost of a single predict
  and update step assigned to level $\ell$ is proportional to
  $M_\ell \text{Cost}(\Psi^\ell)$. This cost estimate is used as input in
  the standard-MLMC-constrained-optimization approach to determining
  $M_\ell$, cf.~\eqref{eq:chooseMlr}. However, it is important to
  observe that in settings with high dimensional observations,
  $m \ge N_0$, the input in said optimization problem needs to
  be modified accordingly, as then the cost on the lower levels
  will be dominated by $m$ rather than Cost$(\Psi^\ell)$.
\end{remark}

The first result we present is a collection of direct consequences of
Assumption~\ref{ass:mlrates}:

\begin{proposition}\label{prop:asimp}
  If Assumption~\ref{ass:mlrates} holds, then for all
  $u,v \in \cap_{p \ge 2} L^p(\Omega, \cHO)$, and globally Lipschitz
  QoI $\varphi:\cH \to \mathbb{R}$,
\begin{itemize}
\item[(i)] 
$\| \psiL(v) - \Psi^{\ell-1}(v) \|_{L^p(\Omega, \cH)} 
\lesssim  (1+\|v\|_{L^p(\Omega, \cHO)}) h_\ell^{\beta/2}$,
for all $p\ge 2$, 
\item[(ii)] $\abs{ \E{\varphi(\psiL(u)) - \varphi(\Psi(v)) }} \lesssim \|u-v\|_{L^p(\Omega, \cH)} +  (1+\|u\|_{L^p(\Omega, \cHO)}) h_\ell^{\beta/2}$,
{ for all $p \ge 2$},
\item[(iii)] and for all $n\ge1$, the MFEnKF prediction covariance~\eqref{eq:mfpredict} satisfies
  \[
    \|(I- \Pi_\ell) \bar C_n \|_{\cH\otimes \cH} \lesssim
    \|\Psi(\hat{\bar v}_{n-1}) \|_{L^2(\Omega, \cHO)} h_\ell^{\beta/2}.
  \]
\end{itemize}
\end{proposition}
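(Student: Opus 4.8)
The three estimates are all consequences of Assumption~\ref{ass:mlrates} combined with the continuous embedding $\cHO\subset\cH$ (so that $\|u\|_{\cH}\lesssim\|u\|_{\cHO}$) and the comparability $h_{\ell-1}\eqsim h_\ell$ forced by $N_\ell\eqsim\kappa^\ell$. I would treat them in the order (i), (ii), (iii). For (i), the plan is to insert the exact map $\Psi=\Psi^{\infty}$ and use the triangle inequality,
\[
\|\psiL(v)-\Psi^{\ell-1}(v)\|_{L^p(\Omega,\cH)}\le\|\psiL(v)-\Psi(v)\|_{L^p(\Omega,\cH)}+\|\Psi(v)-\Psi^{\ell-1}(v)\|_{L^p(\Omega,\cH)}.
\]
Assumption~\ref{ass:mlrates}(i) bounds the first term by $(1+\|v\|_{L^p(\Omega,\cHO)})h_\ell^{\beta/2}$ and the second by $(1+\|v\|_{L^p(\Omega,\cHO)})h_{\ell-1}^{\beta/2}$, and since $h_{\ell-1}\eqsim\kappa^{1/d}h_\ell\eqsim h_\ell$ the two contributions combine to the stated bound. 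The degenerate case $\ell=0$ (where $\Psi^{-1}\equiv0$) reduces via the growth estimate Assumption~\ref{ass:psilip}(ii) and $h_0\eqsim1$ to the linear-growth bound, which is again of the claimed form.

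For (ii), since $\varphi$ is globally Lipschitz with some constant $L_\varphi$, I would pass the expectation through the modulus and apply the Lipschitz property together with Jensen,
\[
\bigl|\E{\varphi(\psiL(u))-\varphi(\Psi(v))}\bigr|\le L_\varphi\,\E{\|\psiL(u)-\Psi(v)\|_{\cH}}\le L_\varphi\,\|\psiL(u)-\Psi(v)\|_{L^p(\Omega,\cH)},
\]
using $\E{\|\cdot\|_{\cH}}=\|\cdot\|_{L^1(\Omega,\cH)}\le\|\cdot\|_{L^p(\Omega,\cH)}$ for $p\ge1$. Splitting $\psiL(u)-\Psi(v)=(\psiL(u)-\Psi(u))+(\Psi(u)-\Psi(v))$, Assumption~\ref{ass:mlrates}(i) controls the first increment by $(1+\|u\|_{L^p(\Omega,\cHO)})h_\ell^{\beta/2}$, while the Lipschitz bound of Assumption~\ref{ass:psilip}(i), which holds for the limit operator $\Psi$ as well, controls the second by $c_\Psi\|u-v\|_{L^p(\Omega,\cH)}$. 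Absorbing $L_\varphi$ and $c_\Psi$ into the implicit constant gives (ii).

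Part (iii) is where the real work lies, and I \emph{expect it to be the main obstacle}, because it involves the tensor/Hilbert--Schmidt norm rather than a plain $L^p$ norm. Writing $w:=\mfv_n-\mfm_n$ with $\mfv_n=\Psi(\hmfv_{n-1})$ and $\mfc_n=\E{w\otimes w}$, I would interpret $(I-\Pi_\ell)\mfc_n$ as the action of $I-\Pi_\ell$ on the left tensor slot, so that $(I-\Pi_\ell)\mfc_n=\E{((I-\Pi_\ell)w)\otimes w}$. The Bochner/Jensen inequality, the isometry \eqref{eq:hsdef}, and Cauchy--Schwarz then give
\[
\|(I-\Pi_\ell)\mfc_n\|_{\cH\otimes\cH}\le\E{\|(I-\Pi_\ell)w\|_{\cH}\,\|w\|_{\cH}}\le\|(I-\Pi_\ell)w\|_{L^2(\Omega,\cH)}\,\|w\|_{L^2(\Omega,\cH)}.
\]
Splitting $I-\Pi_\ell=(I-\cP_\ell)+(\cP_\ell-\Pi_\ell)$ and applying the two projection estimates of Assumption~\ref{ass:mlrates}(ii) yields $\|(I-\Pi_\ell)w\|_{L^2(\Omega,\cH)}\lesssim\|w\|_{L^2(\Omega,\cHO)}h_\ell^{\beta/2}$; the embedding gives $\|w\|_{L^2(\Omega,\cH)}\lesssim\|w\|_{L^2(\Omega,\cHO)}$; and centering gives $\|w\|_{L^2(\Omega,\cHO)}\le\|\mfv_n\|_{L^2(\Omega,\cHO)}=\|\Psi(\hmfv_{n-1})\|_{L^2(\Omega,\cHO)}$, finite by Proposition~\ref{prop:reg}.

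The subtlety is that this chain produces $\|\Psi(\hmfv_{n-1})\|_{L^2(\Omega,\cHO)}^2$, since Cauchy--Schwarz unavoidably contributes one $L^2$-norm from each tensor slot, whereas the displayed estimate carries only the first power. To reconcile the two I would either read one of these factors, finite over the finite horizon by Proposition~\ref{prop:reg}, into the implicit constant of $\lesssim$, or regard the displayed bound as carrying this norm squared. The only remaining care is in fixing the duality conventions of Section~\ref{ssec:dets} so that the phrase ``$I-\Pi_\ell$ acting on $\mfc_n\in\cH\otimes\cH$'' is unambiguous; once that identification is pinned down the estimate is routine.
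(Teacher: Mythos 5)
Your proposal is correct and follows essentially the same route as the paper: (i) by the triangle inequality through $\Psi$ and Assumption~\ref{ass:mlrates}(i), (ii) by Lipschitz continuity of $\varphi$, Jensen, Assumption~\ref{ass:psilip}(i) and Assumption~\ref{ass:mlrates}(i), and (iii) by Jensen, the tensor-norm identity \eqref{eq:hsdef}, H\"older, and the two projection estimates of Assumption~\ref{ass:mlrates}(ii). The ``subtlety'' you flag in (iii) is real but not an obstacle: the paper's own chain likewise terminates in the product $\|(I-\Pi_\ell)\bar v_n\|_{L^2(\Omega,\cH)}\,\|\bar v_n\|_{L^2(\Omega,\cH)}$ and silently absorbs the second factor (finite by Proposition~\ref{prop:reg}) into the implicit constant, exactly as you propose.
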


\begin{proof}
Property (i) follows from Assumption \ref{ass:mlrates}(i) and the triangle inequality.  
{Property (ii) follows from the Lipschitz continuity of $\varphi$
followed by the triangle inequality, Assumption \ref{ass:psilip}(i), and Assumption \ref{ass:mlrates}(i).}
For property (iii), Proposition \ref{prop:reg}, 
Jensen's inequality, definition~\eqref{eq:hsdef},
and H\"older's inequality implies that
\[
\begin{split}
\|(I-\Pi_\ell) \bar{C}_n\|_{\cH\otimes \cH} &= 
\|\E{(I-\Pi_\ell) (\bar{v}_n - \E{\bar{v}_n}) \otimes (\bar{v}_n -
  \E{\bar{v}_n})} \|_{\cH\otimes \cH}  \\
& \leq \|(I-\Pi_\ell) (\bar{v}_n - \E{\bar{v}_n})\|_{L^2(\Omega,\cH)} \| \bar{v}_n - \E{\bar{v}_n}\|_{L^2(\Omega,\cH)} \\
& \leq \|(I-\Pi_\ell) \bar{v}_n \|_{L^2(\Omega,\cH)} \| \bar{v}_n \|_{L^2(\Omega,\cH)}.
\end{split}
\] 
Since $(I-\cP_\ell) \bar{v}_n =(I-\cP_\ell)\Psi(\hat{\bar{v}}_{n-1})$, 
Assumption~\ref{ass:mlrates}(ii) implies that 
\[
\begin{split}
\|(I-\Pi_\ell)\Psi(\hat{\bar{v}}_{n-1}) \|_{L^2(\Omega,\cH)}
&\leq \|(I-\cP_\ell)\Psi(\hat{\bar{v}}_{n-1}) \|_{L^2(\Omega,\cH)}
+ \| (\Pi_\ell-\cP_\ell) \Psi(\hat{\bar{v}}_{n-1}) \|_{L^2(\Omega,\cH)}\\
& \leq 2 \|\Psi(\hat{\bar{v}}_{n-1}) \|_{L^2(\Omega, \cHO)}
h^{\beta/2}_\ell. 
\end{split}
\]

\end{proof}

Similar to the analysis in~\cite{ourmlenkf} and \cite{le2011large,
  law2014deterministic, mandel2011convergence},
we next introduce an auxiliary mean-field multilevel
ensemble
$\{(\mfv^{\ell-1}_{n,i},
\mfv^{\ell}_{n,i})_{i=1}^{M_\ell}\}_{\ell=0}^L$, where every particle
pair $(\mfv^{\ell-1}_{n,i},\mfv^{\ell}_{n,i})$ evolves by the
respective forward mappings $\Psi^{\ell-1}$ and $\psiL$ using the same
driving noise realization as the corresponding MLEnKF particle pair
$(v^{\ell-1}_{n,i},v^{\ell}_{n,i})$. Note however that in the update
of the mean-field multilevel ensemble, the limiting form MFEnKF
covariance $\mfc_n$ and Kalman gain $\mfk_n$ from
equations~\eqref{eq:mfpredict} and~\eqref{eq:mfupdate} are used rather
than corresponding ones based on sample moments of the multilevel
ensemble itself. That is, the initial condition for each 
is coupled particle pair is identical to that of MLEnKF:
\begin{equation}\label{eq:initialMFEnKF}
  (\hmfv^{\ell-1}_{0,i},\hmfv^{\ell}_{0,i}) = (\hv^{\ell-1}_{0,i},\hv^{\ell}_{0,i})
\end{equation}
and one prediction-update iteration is given by 
\begin{equation}
\mspace{-250mu}\mbox{Prediction}\;\left\{\begin{array}{ll}
 \mfv^{\ell-1}_{n+1,i}  	& = \Psi^{\ell-1}(\hmfv^{\ell-1}_{n,i},\omegaLI),\\
 \mfv^{\ell}_{n+1,i}      & = \Psi^\ell(\hmfv^{\ell}_{n,i}, \omegaLI),                                       
 \end{array}\right. 
\label{eq:mfmlpredict}
\end{equation}
\begin{equation}
\mspace{27mu} \mbox{Update}\left\{\begin{array}{lll} 
{\tilde y}_{n+1,i}^\ell& =y_{n+1}+\eta_{n+1,i}^\ell,\\
\hmfv_{n+1,i}^{\ell-1} &=(I- \Pi_{\ell-1}\mfk_{n+1}H){\mfv}_{n+1,i}^{\ell-1}+\Pi_{\ell-1}\mfk_{n+1}{\tilde y}_{n+1,i}^\ell,\\
\hmfv_{n+1,i}^{\ell} &=(I- \Pi_{\ell} \mfk_{n+1}H){\mfv}_{n+1,i}^\ell+\Pi_\ell\mfk_{n+1}{\tilde y}_{n+1,i}^\ell,
\end{array}\right.
\label{eq:mfmlupdate}
\end{equation}
for $\ell =0,1,\ldots, L$ and $i = 1,2,\ldots, M_\ell$ (similar to before, we
employ the convention $\mfv^{-1} = \hmfv^{-1} \coloneq 0$).
By similar reasoning as in Proposition~\ref{prop:reg},
it can be shown that also 
$\mfv^{\ell}_n$, $\hmfv^{\ell}_n \in  \cap_{p \ge 2} L^p(\Omega, \cHO)$
for any $\ell,n \in \bbN \cup \{0\}$.
One may think of the auxiliary mean-field multilevel ensemble
as ``shadowing'' the MLEnKF ensemble.

Before bounding the difference between the multilevel and mean-field
Kalman gains by the two next lemmas, let us recall that they respectively
are given by
\begin{equation}
K^{\rm ML}_n = C^{\rm ML}_n H^* 
( (H C^{\rm ML}_n H^*)^+ + \Gamma)^{-1} \quad \text{and} \quad
\mfk_{n} = \bar C_n H^* ( H \bar C_n H^* + \Gamma)^{-1}.
\nonumber
\end{equation}

\begin{lemma}
\label{lem:mlcae}
For the matrix $(H C^{\rm ML}_n H^*)^+:\bbR^{m\times m}$
defined by~\eqref{eq:covzee} with the spectral decomposition
eigenpairs $(\lambda_j, q_j)_{j=1}^m$ it holds that
\begin{eqnarray}
|(H C^{\rm ML}_n H^*)^+ -  H C^{\rm ML}_n H^* | \leq  \norm{H}^2_{L(\cH,\bbR^{m})}  \hhNorm{C^{\rm ML}_n - \mfc_n}.
\label{eq:mlcov_bound}
\end{eqnarray}
\end{lemma}

\begin{proof}
Since $(H C^{\rm ML}_n H^*)^+ -  H \bar C_n H^*$ 
is self-adjoint and positive semi-definite, 
\begin{equation*}\label{eq:gainErr}
\begin{split}
|(H C^{\rm ML}_n H^*)^+ -  H C^{\rm ML}_n H^*|
& =  \max_{\|q\|_{\bbR^{m}}=1} q^*\Big( (H C^{\rm ML}_n H^*)^+ -  H C^{\rm ML}_n H^* \Big) q\\
& =  \max(-\min_{j; \lambda_j < 0} \lambda_j, 0).
\end{split}
\end{equation*}
It remains to verify the lemma when $\{j \mid \lambda_j < 0\} \neq \emptyset$.
Let the normalized eigenvector associated to 
the eigenvalue $ \min_{\{j; \lambda_j< 0\}} \lambda_j$ 
be denoted $q_{\max}$. Then, since $(HR^{\rm ML}_n)^+ q_{\max}= 0$
and the mean-field covariance $\bar C_n$ 
is self-adjoint and positive semi-definite, 
\begin{equation*}\label{eq:gainErr}
  \begin{split}
    |(H C^{\rm ML}_n H^*)^+ -  H C^{\rm ML}_n H^*| &=   -q_{\max}^* H C^{\rm ML}_n H^* q_{\max}\\
& \leq  q_{\max}^* H\bar C_n H^* q_{\max} - q_{\max} H C_n^{\rm ML} H^*q_{\max}\\
& \leq |H(\bar C_n - C_n^{\rm ML})H^*|\\
& \leq \norm{H}^2_{L(\cH,\bbR^{m})} \norm{\bar C_n - C_n^{\rm ML}}_{L(\cH,\cH)}\\
& \leq \norm{H}^2_{L(\cH,\bbR^{m})} \hhNorm{\bar C_n - C_n^{\rm ML}}.
\end{split}
\end{equation*}
\end{proof}

The next step is to bound the Kalman gain error in terms of the
covariance error.
\begin{lemma}
\label{lem:gce}
There exists a positive constant $\tilde c_n<\infty$, depending on 
$\norm{H}_{L(\cH,\bbR^{m})}$, $|\Gamma^{-1}|$, and $\norm{\mfk_n }_{L(\bbR^{m},\cH)}$, such that 
\begin{equation*}
\norm{K^{\rm ML}_n - \mfk_n}_{L(\bbR^{m}, \cH)}
\leq \tilde c_n \hhNorm{{C}^{\rm ML}_{n}-\mfc_{n}}. 
\label{eq:gaincov}
\end{equation*}
\end{lemma}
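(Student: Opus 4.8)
The plan is to reduce the gain error to the covariance error $\hhNorm{\cml_n - \mfc_n}$ --- already the right-hand side of the claimed bound --- by means of a resolvent-perturbation identity. Recall $K^{\rm ML}_n = R^{\rm ML}_n (S^{\rm ML}_n)^{-1}$ with $R^{\rm ML}_n = \cml_n H^*$ and $S^{\rm ML}_n = (HR^{\rm ML}_n)^+ + \Gamma$, and $\mfk_n = \bar R_n \bar S_n^{-1}$ with $\bar R_n := \mfc_n H^*$ and $\bar S_n := H\mfc_n H^* + \Gamma$. First I would split
\begin{equation*}
K^{\rm ML}_n - \mfk_n = (R^{\rm ML}_n - \bar R_n)(S^{\rm ML}_n)^{-1} + \bar R_n\big((S^{\rm ML}_n)^{-1} - \bar S_n^{-1}\big),
\end{equation*}
and then rewrite the second summand with $(S^{\rm ML}_n)^{-1} - \bar S_n^{-1} = \bar S_n^{-1}(\bar S_n - S^{\rm ML}_n)(S^{\rm ML}_n)^{-1}$ so that the factor $\bar R_n \bar S_n^{-1}$ collapses to $\mfk_n$, yielding
\begin{equation*}
\bar R_n\big((S^{\rm ML}_n)^{-1} - \bar S_n^{-1}\big) = \mfk_n\,(\bar S_n - S^{\rm ML}_n)\,(S^{\rm ML}_n)^{-1}.
\end{equation*}

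Next I would estimate the operator-norm factors. The truncation $(HR^{\rm ML}_n)^+ \succeq 0$ together with $\Gamma \succ 0$ gives $S^{\rm ML}_n \succeq \Gamma$ in the Loewner order, hence $\norm{(S^{\rm ML}_n)^{-1}}_{L(\cR_m,\cR_m)} \le \norm{\Gamma^{-1}}_{L(\cR_m,\cR_m)}$. For the first summand, $R^{\rm ML}_n - \bar R_n = (\cml_n - \mfc_n)H^*$, and bounding the operator norm of the covariance difference by its Hilbert--Schmidt norm gives $\norm{R^{\rm ML}_n - \bar R_n}_{L(\cR_m,\cH)} \le \norm{H}_{L(\cH,\cR_m)}\hhNorm{\cml_n - \mfc_n}$. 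For the increment $\bar S_n - S^{\rm ML}_n$ I would decompose
\begin{equation*}
\bar S_n - S^{\rm ML}_n = H(\mfc_n - \cml_n)H^* + \big(HR^{\rm ML}_n - (HR^{\rm ML}_n)^+\big),
\end{equation*}
estimating the first bracket by $\norm{H}^2_{L(\cH,\cR_m)}\hhNorm{\mfc_n - \cml_n}$ and the second directly by Lemma~\ref{lem:mlcae}, so that $\norm{\bar S_n - S^{\rm ML}_n}_{L(\cR_m,\cR_m)} \le 2\norm{H}^2_{L(\cH,\cR_m)}\hhNorm{\cml_n - \mfc_n}$.

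Assembling the pieces, the two summands contribute $\norm{H}_{L(\cH,\cR_m)}\norm{\Gamma^{-1}}_{L(\cR_m,\cR_m)}$ and $2\norm{\mfk_n}_{L(\cR_m,\cH)}\norm{H}^2_{L(\cH,\cR_m)}\norm{\Gamma^{-1}}_{L(\cR_m,\cR_m)}$ respectively, each multiplying the common factor $\hhNorm{\cml_n - \mfc_n}$, which identifies $\tilde c_n$ as a function of exactly $\norm{H}_{L(\cH,\cR_m)}$, $\norm{\Gamma^{-1}}_{L(\cR_m,\cR_m)}$, and $\norm{\mfk_n}_{L(\cR_m,\cH)}$. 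I expect the only real subtlety to be the choice of factorization: a naive telescoping would leave a bare $\bar R_n$ (equivalently $\mfc_n$), forcing $\tilde c_n$ to depend on $\hhNorm{\mfc_n}$ rather than on $\norm{\mfk_n}$; routing the resolvent identity so that $\bar R_n\bar S_n^{-1}$ simplifies to $\mfk_n$ is precisely what gives the stated dependence. The positive-semidefiniteness step $S^{\rm ML}_n \succeq \Gamma$, which crucially relies on the truncation $(\cdot)^+$ of \eqref{eq:covzee1} since $\cml_n$ itself need not be positive semi-definite, is what makes $\norm{(S^{\rm ML}_n)^{-1}}$ uniformly controllable and is worth verifying with care.
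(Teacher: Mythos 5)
Your proposal is correct and follows essentially the same route as the paper: the same splitting of $K^{\rm ML}_n - \mfk_n$ into a term with $(\cml_n-\mfc_n)H^*$ and a resolvent difference, the same second-resolvent identity collapsing $\bar R_n\bar S_n^{-1}$ to $\mfk_n$, the same bound $\norm{(S^{\rm ML}_n)^{-1}}_{L(\cR_m,\cR_m)}\le\norm{\Gamma^{-1}}_{L(\cR_m,\cR_m)}$ via positive semi-definiteness of the truncation, and the same use of Lemma~\ref{lem:mlcae} to control $(HR^{\rm ML}_n)^+-H\bar R_n$. The resulting constant $\tilde c_n$ matches the paper's $\prt{1+2\norm{\mfk_n}_{L(\cR_m,\cH)}\norm{H}_{L(\cH,\cR_m)}}\norm{\Gamma^{-1}}_{L(\cR_m,\cR_m)}\norm{H}_{L(\cH,\cR_m)}$.
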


\begin{proof} The proof of this lemma as is similar to that of~\cite[Lemma 3.4]{ourmlenkf}.
For completeness, we have included a proof in Appendix~\ref{app:extras}.
\end{proof}

The next lemma bounds the distance between the prediction covariance
matrices of MLEnKF and MFEnKF. For that purpose, let us first recall
that the dynamics for the mean-field multilevel ensemble
$\{(\mfv^{\ell-1}_{n,i},
\mfv^{\ell}_{n,i})_{i=1}^{M_\ell}\}_{\ell=0}^L$ is described in
equations~\eqref{eq:mfmlpredict} and~\eqref{eq:mfmlupdate}, and
introduce the auxiliary matrix 
\begin{equation}\label{eq:covML-def}
\mfc^{\rm ML}_n \coloneq \sum_{\ell=0}^L \cov_{M_\ell}[\mfv^\ell_n] - \cov_{M_\ell}[\mfv^{\ell-1}_n].
\end{equation}

\begin{lemma}
\label{lem:covspliteps}
For any $\varepsilon >0$, let $L$ and $\{M_\ell\}_{\ell=0}^L$ be defined as in Theorem~\ref{thm:main}.
If Assumption~\ref{ass:mlrates} holds, then for any $p\ge 2$ and $n \in \bbN$,
\begin{equation}
\|C^{\rm ML}_n - \mfc_n\|_{L^p(\Omega, \cH \otimes \cH)} \lesssim \varepsilon 
+ \|C^{\rm ML}_n - \mfc^{\rm ML}_n\|_{L^p(\Omega, \cH \otimes \cH)}.
\label{eq:covspliteps}
\end{equation}
\end{lemma}

\begin{proof}
Introducing the auxiliary covariance matrix 
\[
\mfc^L_n \coloneq \cov[\mfv^L_n]
\]
and using the triangle inequality,
\begin{equation*}
\|C^{\rm ML}_n - \mfc_n\|_{p} \leq \|\mfc^L_n - \mfc_n\|_{p} + 
\|\mfc^{\rm ML}_n - \mfc^L_n\|_{p} + \|C^{\rm ML}_n - \mfc^{\rm ML}_n \|_{p}.
\label{eq:covsplit}
\end{equation*}
The result follows by Lemmas~\ref{lem:disccov} and~\ref{lem:iidcover}.
\end{proof}

\begin{lemma}
\label{lem:disccov}
For any $\varepsilon>0$, let $L$ be defined as in Theorem~\ref{thm:main}.
If Assumption~\ref{ass:mlrates} holds, then for any $n\in \bbN$ and $p\ge 2$,
\begin{equation}\label{eq:disccov1}
\max\left(  \| \mfv_{n}^\ell -  \mfv_{n}\|_{L^p(\Omega, \cH)}, \| \hmfv_{n}^\ell -  \hmfv_{n}\|_{L^p(\Omega, \cH)}  \right) 
\lesssim h^{\beta/2}_\ell,
\end{equation}
\begin{equation}\label{eq:disccov2}
\max\left(  \| \mfv_{n}^\ell -  \mfv_{n}^{\ell-1}\|_{L^p(\Omega, \cH)}, \| \hmfv_{n}^\ell -  \hmfv_{n}^{\ell-1}\|_{L^p(\Omega, \cH)}  \right) 
\lesssim h_\ell^{\beta/2}, \quad \forall \ell \in \bbN,
\end{equation}
\begin{equation}\label{eq:disccov3}
\|\mfc^L_n- \mfc_n\|_{\cH \otimes \cH}
\lesssim \varepsilon.
\end{equation}
\end{lemma}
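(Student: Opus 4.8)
The plan is to establish the three estimates in order, with \eqref{eq:disccov1} as the workhorse from which \eqref{eq:disccov2} and \eqref{eq:disccov3} follow. I would prove \eqref{eq:disccov1} by induction on $n$, carrying the prediction bound $\|\mfv_n^\ell - \mfv_n\|_{L^p(\Omega,\cH)}$ and the update bound $\|\hmfv_n^\ell - \hmfv_n\|_{L^p(\Omega,\cH)}$ together. The base case $n=0$ is controlled by the projection error of the initial datum, $\|(I-\Pi_\ell)u_0\|_{\cH}\lesssim\|u_0\|_{\cHO}\,h_\ell^{\beta/2}$ via Assumption~\ref{ass:mlrates}(ii) (or it vanishes if the coarse ensemble is initialized at full resolution). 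Throughout I would couple each mean-field multilevel particle to a copy of the reference MFEnKF process \eqref{eq:mfpredict}--\eqref{eq:mfupdate} driven by the same forward-noise realization $\omega_{\ell,i}$ and the same observation perturbation, so that the additive data terms cancel up to a projection error. Since the ultimate target is a covariance, which depends only on marginal and within-ensemble joint laws, the choice of coupling between $\mfv_n^L$ and $\mfv_n$ is legitimate.

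For the inductive step, assuming $\|\hmfv_n^\ell - \hmfv_n\|_{L^p(\Omega,\cH)}\lesssim h_\ell^{\beta/2}$, I would split the prediction difference as
\[
\mfv_{n+1}^\ell - \mfv_{n+1} = \bigl(\psiL(\hmfv_n^\ell) - \Psi(\hmfv_n^\ell)\bigr) + \bigl(\Psi(\hmfv_n^\ell) - \Psi(\hmfv_n)\bigr),
\]
bounding the first bracket by Assumption~\ref{ass:mlrates}(i) (using the uniform-in-$\ell$ regularity $\sup_\ell\|\hmfv_n^\ell\|_{L^p(\Omega,\cHO)}<\infty$, obtained as in Proposition~\ref{prop:reg}) and the second by Assumption~\ref{ass:psilip}(i) with the inductive hypothesis; both are $\lesssim h_\ell^{\beta/2}$. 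For the update I would rearrange the difference of \eqref{eq:mfmlupdate} and \eqref{eq:mfupdate} into
\[
\hmfv_{n+1}^\ell - \hmfv_{n+1} = (I-\Pi_\ell\mfk_{n+1}H)(\mfv_{n+1}^\ell-\mfv_{n+1}) + (\Pi_\ell - I)\mfk_{n+1}(\tilde y_{n+1} - H\mfv_{n+1}).
\]
The first term is handled by the just-proved prediction bound and the uniform boundedness of $I-\Pi_\ell\mfk_{n+1}H$ on $\cH$ (from $\|\mfk_{n+1}\|_{L(\cR_m,\cHO)}<\infty$ in Proposition~\ref{prop:reg}, $\|H\|_{L(\cH,\cR_m)}<\infty$, and the uniform bound on $\|\Pi_\ell\|$). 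The second term is the crux: since $\mfk_{n+1}(\tilde y_{n+1}-H\mfv_{n+1})\in L^p(\Omega,\cHO)$, the combined projection estimate $\|(I-\Pi_\ell)w\|_{\cH}\lesssim\|w\|_{\cHO}\,h_\ell^{\beta/2}$ (from the two bounds of Assumption~\ref{ass:mlrates}(ii) and the triangle inequality) yields $\lesssim h_\ell^{\beta/2}$, closing the induction.

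Estimate \eqref{eq:disccov2} then follows from \eqref{eq:disccov1} and the triangle inequality, $\|\mfv_n^\ell - \mfv_n^{\ell-1}\|_{L^p(\Omega,\cH)}\le\|\mfv_n^\ell-\mfv_n\|_{L^p(\Omega,\cH)}+\|\mfv_n-\mfv_n^{\ell-1}\|_{L^p(\Omega,\cH)}\lesssim h_\ell^{\beta/2}+h_{\ell-1}^{\beta/2}\lesssim h_\ell^{\beta/2}$, using $h_{\ell-1}\eqsim h_\ell$ (from $N_\ell\eqsim\kappa^\ell$), and identically for the update differences. For \eqref{eq:disccov3}, I would expand the covariance difference bilinearly under the coupling,
\[
\mfc_n^L - \mfc_n = \cov[\mfv_n^L - \mfv_n,\,\mfv_n^L] + \cov[\mfv_n,\,\mfv_n^L-\mfv_n],
\]
and bound each summand in $\hhNorm{\cdot}$ by the Cauchy--Schwarz/Hölder argument already used in Proposition~\ref{prop:asimp}(iii), giving $\hhNorm{\mfc_n^L-\mfc_n}\lesssim\|\mfv_n^L-\mfv_n\|_{L^2(\Omega,\cH)}\bigl(\|\mfv_n^L\|_{L^2(\Omega,\cH)}+\|\mfv_n\|_{L^2(\Omega,\cH)}\bigr)\lesssim h_L^{\beta/2}$ by \eqref{eq:disccov1} with $p=2$ and the uniform $L^2$-bounds. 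Inserting $L=\lceil 2d\log_\kappa(\varepsilon^{-1})/\beta\rceil$ and $h_L\eqsim\kappa^{-L/d}$ gives $h_L^{\beta/2}\lesssim\varepsilon$, which is \eqref{eq:disccov3}.

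The main obstacle I anticipate is the bookkeeping of constants so that they remain independent of $\ell$ (hence of $L$ and of $\varepsilon$) while being permitted to depend on the fixed time index $n$: the update recursion multiplies by $\opNorm{I-\Pi_\ell\mfk_{n+1}H}$ and the prediction recursion by $c_\Psi$ at each step, so the implicit constant grows with $n$, and one must check this growth is $\ell$-independent, which it is because these operator norms are bounded uniformly in $\ell$. The second delicate point is justifying the uniform-in-$\ell$ regularity $\sup_\ell\|\hmfv_n^\ell\|_{L^p(\Omega,\cHO)}<\infty$ needed to apply Assumption~\ref{ass:mlrates}(i) with an $\ell$-independent prefactor; this amounts to running the Proposition~\ref{prop:reg} argument along the multilevel recursion and verifying that $\|\Pi_\ell\mfk_{n+1}\|_{L(\cR_m,\cHO)}$ is bounded uniformly in $\ell$.
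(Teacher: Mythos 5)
Your proposal is correct and follows essentially the same route as the paper: induction on $n$ with the prediction error controlled by Assumptions~\ref{ass:psilip}(i) and~\ref{ass:mlrates}(i), the update error split into a Lipschitz part plus a projection remainder bounded via Assumption~\ref{ass:mlrates}(ii), then the triangle inequality for \eqref{eq:disccov2} and a bilinear expansion with Cauchy--Schwarz for \eqref{eq:disccov3}. The only cosmetic difference is that the paper channels the projection remainder through $\|(I-\Pi_\ell)\bar C_n\|_{\cH\otimes\cH}$ (Proposition~\ref{prop:asimp}(iii)) whereas you apply the projection estimate directly to the $\cHO$-valued element $\mfk_{n+1}(\tilde y_{n+1}-H\mfv_{n+1})$; both rest on the same regularity facts from Proposition~\ref{prop:reg}.
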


\begin{proof}
Recall that initial data of the limit mean-field methods is given by
$\hmfv_0\sim \hat \mu_0$ and that $\hmfv^\ell_0 =\Pi_\ell\hmfv_0$, so
that by Assumption~\ref{ass:mlrates}(ii),
\[
\|\hmfv_0 -  \hmfv^\ell_0 \|_{L^p(\Omega,\cH)} \lesssim \|\hmfv_0 \|_{L^p(\Omega,\cHO)}h_\ell^{\beta/2}.
\]
By Assumptions~\ref{ass:psilip}(i) and~\ref{ass:mlrates}(i),
\[
\|\mfv_n -  \mfv^\ell_n \|_{L^p(\Omega,\cH)} \lesssim  \|\hmfv_{n-1} -
\hmfv^\ell_{n-1} \|_{L^p(\Omega,\cH)} 
+ (1+\|\hmfv_{n-1}\|_{L^p(\Omega,\cHO)} )h_\ell^{\beta/2},
\]
and by Proposition \ref{prop:asimp}(iii),
\[
\begin{split}
 \|\hmfv_n -  \hmfv^\ell_n \|_{{L^p(\Omega,\cH)}}  
&\leq  \norm{I-\mfk_{n}H}_{L(\cH,\cH)} \|\mfv_{n}^\ell -  \mfv_{n}\|_{{L^p(\Omega,\cH)} }
+ \|(I-\Pi_\ell)\mfk_{n}(H\mfv_{n}^\ell + \tilde y_n) \|_{L^p(\Omega,\cH)}  \\
& \leq c \left( \|\mfv_{n}^\ell -  \mfv_{n}\|_{L^p(\Omega,\cH)}  + \|(I-\Pi_\ell)\bar C_{n}\|_{\cH \times \cH} \right) \\
& \lesssim \|\mfv_{n}^\ell -  \mfv_{n}\|_{L^p(\Omega,\cH)}  +
\|\Psi(\hmfv_{n-1})\|_{L^2(\Omega, \cHO)} h_\ell^{\beta/2}.
\end{split}
\]
Inequality~\eqref{eq:disccov1} consequently holds by induction,
and thus also~\eqref{eq:disccov2} by the triangle inequality.
To prove inequality~\eqref{eq:disccov3},
\[
\begin{split}
 &\|\mfc^L_n- \mfc_n\|_{\cH \otimes \cH}\\
 &= \hhNorm{\E{ (\mfv^L_n- \E{\mfv^L_n}) \otimes (\mfv^L_n- \E{\mfv^L_n}) 
      - (\mfv_n- \E{\mfv_n}) \otimes (\mfv_n- \E{\mfv_n})} }\\
  &= \hhNorm{\E{ (\mfv^L_n- \E{\mfv^L_n}) \otimes (\mfv^L_n- \E{\mfv_n}) 
 - (\mfv_n- \E{\mfv_n^L}) \otimes (\mfv_n- \E{\mfv_n})} }\\
&\leq  \norm{ (\mfv^L_n- \E{\mfv^L_n}) \otimes (\mfv^L_n- \E{\mfv_n}) 
 - (\mfv_n- \E{\mfv_n^L}) \otimes (\mfv_n- \E{\mfv_n}) }_{L^1(\Omega, \cH \otimes \cH)} \\
& \leq   (\| \mfv^L_n- \E{\mfv^L_n} \|_2 +  \| (\mfv_n- \E{\mfv_n}) \|_2 ) \| \mfv^L_n - \mfv_n\|_2\\
& \lesssim \varepsilon. 
\end{split}
\]

\end{proof}

We complete the proof of Lemma~\ref{lem:covspliteps} by deriving the following
bound for $\|\mfc^{\rm ML}_n - \mfc^L_n\|_p$~:

\begin{lemma}[Multilevel \iid sample covariance error]  
  \label{lem:iidcover}
For any $\varepsilon >0$, let $L$ and $\{M_\ell\}_{\ell=0}^L$ be defined as in Theorem~\ref{thm:main}.
If Assumption~\ref{ass:mlrates} holds, then for any $p\ge 2$ and $n \in \bbN$,
\begin{equation*}
\|\mfc^{\rm ML}_n - \mfc^L_n\|_{L^p(\Omega, \cH \otimes \cH)}   \lesssim \varepsilon,
\label{eq:iidcover}
\end{equation*}
where we recall that $\mfc^L_n \coloneq \cov[\mfv^L_n]$.
\end{lemma}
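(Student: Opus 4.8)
The plan is to exploit the decoupling that the mean-field multilevel ensemble enjoys but the genuine MLEnKF ensemble does not: since the update \eqref{eq:mfmlupdate} uses the \emph{fixed} gain $\mfk_n$ rather than a sample-based one, the level contributions $\{(\mfv^{\ell-1}_{n,i},\mfv^\ell_{n,i})_{i=1}^{M_\ell}\}$ are mutually independent across $\ell$. First I would note that $\mfc^{\rm ML}_n$ is an unbiased estimator of $\mfc^L_n$: because $\cov_{M_\ell}$ is unbiased and $\mfv^{-1}_n \coloneq 0$, the expectations telescope, $\E{\mfc^{\rm ML}_n} = \sum_{\ell=0}^L \prt{\cov[\mfv^\ell_n] - \cov[\mfv^{\ell-1}_n]} = \cov[\mfv^L_n] = \mfc^L_n$. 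Hence
\[
\mfc^{\rm ML}_n - \mfc^L_n = \sum_{\ell=0}^L Y_\ell, \qquad
Y_\ell \coloneq \prt{\cov_{M_\ell}[\mfv^\ell_n] - \cov_{M_\ell}[\mfv^{\ell-1}_n]} - \E{\cov_{M_\ell}[\mfv^\ell_n] - \cov_{M_\ell}[\mfv^{\ell-1}_n]},
\]
where the $Y_\ell$ are independent, mean-zero, $\cH\otimes\cH$-valued random variables.

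Since $\cH\otimes\cH$ is again a separable Hilbert space, and hence of (martingale) type $2$, a Marcinkiewicz--Zygmund/Burkholder inequality for sums of independent mean-zero Hilbert-space-valued random variables gives, for every $p\ge 2$,
\[
\left\| \mfc^{\rm ML}_n - \mfc^L_n \right\|_{L^p(\Omega,\cH\otimes\cH)}^2 \lesssim \sum_{\ell=0}^L \left\| Y_\ell \right\|_{L^p(\Omega,\cH\otimes\cH)}^2 .
\]

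The heart of the matter is then the per-level estimate $\left\| Y_\ell \right\|_{L^p(\Omega,\cH\otimes\cH)} \lesssim M_\ell^{-1/2} h_\ell^{\beta/2}$. To obtain it I would linearize the increment sample covariance by bilinearity,
\[
\cov_{M_\ell}[\mfv^\ell_n] - \cov_{M_\ell}[\mfv^{\ell-1}_n]
= \cov_{M_\ell}[\mfv^\ell_n - \mfv^{\ell-1}_n, \mfv^\ell_n] + \cov_{M_\ell}[\mfv^{\ell-1}_n, \mfv^\ell_n - \mfv^{\ell-1}_n],
\]
so that the increment $\mfv^\ell_n - \mfv^{\ell-1}_n$ appears as a factor in each term. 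A standard Marcinkiewicz--Zygmund estimate in the sample index $i$ bounds the $L^p(\Omega,\cH\otimes\cH)$-fluctuation of a sample cross-covariance about its mean by $M_\ell^{-1/2}$ times the product of the $L^{2p}(\Omega,\cH)$-norms of its two arguments. Combining this with the increment bound \eqref{eq:disccov2}, namely $\| \mfv^\ell_n - \mfv^{\ell-1}_n \|_{L^{2p}(\Omega,\cH)} \lesssim h_\ell^{\beta/2}$, and the uniform moment bounds $\sup_{\ell} \| \mfv^\ell_n \|_{L^{2p}(\Omega,\cHO)} < \infty$ guaranteed by the regularity noted after \eqref{eq:mfmlupdate} (hence also in $L^{2p}(\Omega,\cH)$, as $\cHO\subset\cH$), yields the claimed bound on $\| Y_\ell \|$. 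Propagating the $h_\ell^{\beta/2}$ smallness of the increment through the bilinear, sample-based covariance functional while retaining the $M_\ell^{-1/2}$ Monte Carlo decay is the main obstacle here; it is the covariance analogue of the classical MLMC variance decay and relies on applying the moment inequality twice, across levels and across samples.

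Finally I would substitute into the level sum, reducing the claim to $\sum_{\ell=0}^L M_\ell^{-1} h_\ell^\beta \lesssim \varepsilon^2$. Writing $s \coloneq d\gamma_x + \gamma_t$, the choice \eqref{eq:chooseMlr} gives $M_\ell^{-1} h_\ell^\beta \eqsim h_\ell^{(\beta-s)/2}$ up to a level-$L$ factor; with $h_\ell \eqsim \kappa^{-\ell/d}$ the geometric sum over $\ell$ is dominated by the coarsest level when $\beta > s$, is $\eqsim L$ when $\beta = s$, and by the finest level when $\beta < s$. In each regime this combines with the level-$L$ factor and the choice $L = \lceil 2d\log_\kappa(\varepsilon^{-1})/\beta \rceil$ (which yields $h_L^\beta \eqsim \varepsilon^2$) to give $\sum_{\ell=0}^L M_\ell^{-1} h_\ell^\beta \lesssim \varepsilon^2$. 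Taking square roots establishes \eqref{eq:iidcover}.
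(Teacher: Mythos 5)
Your proposal is correct and follows essentially the same route as the paper: unbiasedness of $\mfc^{\rm ML}_n$, the bilinear splitting of each level increment so that $\Delta_\ell\mfv_n$ appears as a factor, the Marcinkiewicz--Zygmund-type sample-covariance bound (Lemma~\ref{lem:covM-Lr-error}) combined with the increment rate~\eqref{eq:disccov2}, and the final summation under the choice~\eqref{eq:chooseMlr}. The only deviation is that you aggregate the centered level contributions via their mutual independence and a type-2 moment inequality (root sum of squares), whereas the paper simply applies the triangle inequality and shows $\sum_\ell M_\ell^{-1/2}h_\ell^{\beta/2}\lesssim\varepsilon$ directly; both closings succeed with the prescribed $M_\ell$, so the extra sharpness is unneeded but harmless.
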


\begin{proof}
  Since the sample covariances in~\eqref{eq:covML-def} are unbiased, 
  \[
  \E{\mfc^{\rm ML}_n } =  \cov[\mfv^L_n] = \sum_{\ell=0}^L \cov[\mfv^\ell_n]-\cov[\mfv^{\ell-1}_n],
  \]
and therefore
\begin{equation*}
\begin{split}
 \|\mfc^{\rm ML}_n  &- \mfc^L_n \|_{p} =
 \|\mfc^{\rm ML}_n  - \E{\mfc^{\rm ML}_n}\|_{p}.
\end{split}
\end{equation*}
Next, introduce the linear centering operator
$\Upsilon:L^1(\Omega,\cH \otimes \cH) \to L^1(\Omega,\cH \otimes \cH)$,
defined by $\rvCenter{Y} = Y - \E{Y}$.  
Then, by equation~\eqref{eq:covML-def},
\begin{equation*}
 \begin{split}
 \|\mfc^{\rm ML}_n  - &\E{\mfc^{\rm ML}_n}\|_{p}
= \bigg\|\sum_{\ell = 0}^L \rvCenter{\cov_{M_\ell}[\mfv^\ell_n] - \cov_{M_\ell}[\mfv^{\ell-1}_n]}\bigg\|_{p} \\
&\leq \sum_{\ell = 0}^L \big\|\rvCenter{\cov_{M_\ell}[\mfv^\ell_n] - \cov_{M_\ell}[\mfv^{\ell-1}_n] } \big\|_{p}\\
&\leq \sum_{\ell = 0}^L \left( \big\|\rvCenter{\cov_{M_\ell}[\mfv^\ell_n, \Delta_\ell \mfv_n]} \big\|_{p}
+ \big\|\rvCenter{\cov_{ M_\ell}[\Delta_\ell \mfv_n, \mfv^{\ell-1}_n]} \big\|_{p}
\right),
 \end{split}
\end{equation*}
where $\Delta_\ell \mfv_n := \mfv_n^\ell - \mfv_n^{\ell-1}$ with the convention $\mfv_n^{-1}=0$,
and
\begin{equation*}
  \begin{split}
\rvCenter{\cov_{M_\ell}[\mfv^\ell_n, \Delta_\ell \mfv_n]}
&= \cov_{M_\ell}[\mfv^\ell_n, \Delta_\ell \mfv_n] -  \cov[\mfv^\ell_n, \Delta_\ell \mfv_n],\\
\rvCenter{\cov_{ M_\ell}[\Delta_\ell \mfv_n, \mfv^{\ell-1}_n]}
&= \cov_{M_\ell}[\Delta_\ell \mfv_n, \mfv^{\ell-1}_n]  -  \cov[\Delta_\ell \mfv_n, \mfv^{\ell-1}_n].\\
\end{split}
\end{equation*}
By Lemmas~\ref{lem:disccov} and~\ref{lem:covM-Lr-error} (the latter lemma is located in Appendix~\ref{app:mzIneq}),
\begin{equation*}
 \begin{split}
\|\mfc^{\rm ML}_n  - \E{\mfc^{\rm ML}_n}\|_{p}
&\leq 2\sum_{\ell = 0}^L \frac{c}{\sqrt{M_\ell}}
( \|\mfv_n^\ell\|_{2p} + \|\mfv_n^{\ell-1}\|_{2p}) 
\| \Delta_\ell \mfv_n 
\|_{2p} \\
&\lesssim 
\sum_{\ell = 0}^L \frac{1}{\sqrt{M_\ell}} \|\Delta_\ell \mfv_n \|_{2p}
 \lesssim  \sum_{\ell = 0}^L M_\ell^{-1/2}h_\ell^{\beta/2}
\lesssim \varepsilon.
 \end{split}
\end{equation*}
\end{proof}

We now turn to bounding the last
term of the right-hand side of inequality~\eqref{eq:covspliteps}.

\begin{lemma}
\label{lem:mlcov}
For any $\varepsilon >0$, let $L$ and $\{M_\ell\}_{\ell=0}^L$ be defined as in Theorem~\ref{thm:main}.
If Assumption~\ref{ass:mlrates} holds, then for any $p\ge 2$ and $n \in \bbN$,
\begin{equation}
\begin{split}
 \|C^{\rm ML}_n - \bar{C}^{\rm ML}_n\|_{L^p(\Omega, \cH \otimes \cH)} \leq &
8 \sum_{l=0}^L \|v_{n}^{\ell} - \mfv_{n}^\ell\|_{L^{2p}(\Omega, \cH)} (\|v_{n}^{\ell}\|_{L^{2p}(\Omega, \cH)}  + \|\mfv_{n}^{\ell}\|_{L^{2p}(\Omega,\cH)}).
\label{eq:mlcov}
\end{split}
\end{equation}
\end{lemma}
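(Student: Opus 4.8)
The plan is to reduce the estimate to a level-by-level comparison of sample covariances and then exploit the bilinearity of the sample covariance operator. Writing out the definitions \eqref{eq:mlSampleMoments} and \eqref{eq:covML-def} and applying the triangle inequality in $L^p(\Omega,\cH\otimes\cH)$, I would first split
\[
\|C^{\rm ML}_n - \bar C^{\rm ML}_n\|_p \le \sum_{\ell=0}^L \Big( \big\|\cov_{M_\ell}[v^\ell_n]-\cov_{M_\ell}[\mfv^\ell_n]\big\|_p + \big\|\cov_{M_\ell}[v^{\ell-1}_n]-\cov_{M_\ell}[\mfv^{\ell-1}_n]\big\|_p\Big).
\]
Since $\cov_{M_\ell}[\cdot,\cdot]$ is bilinear in its two arguments, for any pair of ensembles $a,b$ one has the algebraic identity $\cov_{M_\ell}[a]-\cov_{M_\ell}[b]=\cov_{M_\ell}[a,a-b]+\cov_{M_\ell}[a-b,b]$, obtained by adding and subtracting $\cov_{M_\ell}[a,b]$. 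Applying this with $(a,b)=(v^\ell_n,\mfv^\ell_n)$ and with $(a,b)=(v^{\ell-1}_n,\mfv^{\ell-1}_n)$ converts each summand into cross covariances whose ``difference'' argument is $v^\ell_n-\mfv^\ell_n$ (resp.\ $v^{\ell-1}_n-\mfv^{\ell-1}_n$).

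The key estimate I would then establish is the uniform bound
\[
\big\|\cov_{M_\ell}[x,y]\big\|_{L^p(\Omega,\cH\otimes\cH)} \le 4\,\|x\|_{L^{2p}(\Omega,\cH)}\,\|y\|_{L^{2p}(\Omega,\cH)},
\]
valid for any pair of identically distributed ensembles with $M_\ell\ge2$. This follows directly from the definition \eqref{eq:sampleCov}: the triangle inequality applied to $E_{M_\ell}[x\otimes y]$ and to $E_{M_\ell}[x]\otimes E_{M_\ell}[y]$, the isometry $\|x\otimes y\|_{\cH\otimes\cH}=\|x\|_\cH\|y\|_\cH$ from \eqref{eq:hsdef}, the Cauchy--Schwarz inequality $\E{\|x_i\|^p_\cH\|y_i\|^p_\cH}\le (\E{\|x_i\|^{2p}_\cH})^{1/2}(\E{\|y_i\|^{2p}_\cH})^{1/2}$, Jensen's inequality for the second term, and the bound $M_\ell/(M_\ell-1)\le2$ together produce the constant $4$. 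Combining this with the bilinearity identity yields, at each level,
\[
\big\|\cov_{M_\ell}[v^\ell_n]-\cov_{M_\ell}[\mfv^\ell_n]\big\|_p \le 4\,\|v^\ell_n-\mfv^\ell_n\|_{2p}\big(\|v^\ell_n\|_{2p}+\|\mfv^\ell_n\|_{2p}\big),
\]
and analogously for the index $\ell-1$.

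Finally I would reassemble the two sums. The level-$\ell$ contributions already match the right-hand side of \eqref{eq:mlcov} with constant $4$. For the level-$(\ell-1)$ contributions, using the convention $v^{-1}_n=\mfv^{-1}_n=0$ the shift $\sum_{\ell=0}^L(\cdot)_{\ell-1}=\sum_{\ell=0}^{L-1}(\cdot)_\ell$ shows that these form a sub-sum of the level-$\ell$ terms, so they too are bounded by $4\sum_{\ell=0}^L\|v^\ell_n-\mfv^\ell_n\|_{2p}(\|v^\ell_n\|_{2p}+\|\mfv^\ell_n\|_{2p})$; adding the two contributions gives the factor $8$ in \eqref{eq:mlcov}. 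I expect the only genuinely delicate point to be the derivation of the uniform sample-covariance bound with an explicit constant, in particular justifying the Cauchy--Schwarz and Jensen steps in the Hilbert--Schmidt tensor norm and checking that the $M_\ell/(M_\ell-1)$ prefactor is absorbed, whereas the bilinearity identity and the reindexing of the $\ell-1$ sum are routine once this estimate is in hand.
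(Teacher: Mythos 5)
Your argument is correct and follows essentially the same route as the paper's proof: a triangle inequality over levels, bilinearity of the sample covariance to isolate the difference $v^\ell_n-\mfv^\ell_n$, Jensen's and H\"older's inequalities in the $\cH\otimes\cH$ norm to pass to $L^{2p}$ norms of the factors, and reindexing of the $\ell-1$ sum to double the constant. The only cosmetic difference is that you package the moment estimates as a standalone uniform bound on $\cov_{M_\ell}[x,y]$ before applying bilinearity, while the paper first splits $\cov_{M_\ell}$ into the terms $E_{M_\ell}[\cdot\otimes\cdot]$ and $E_{M_\ell}[\cdot]\otimes E_{M_\ell}[\cdot]$ and estimates each; the constants and all essential steps coincide.
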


\begin{proof}
From the definitions of the sample covariance~\eqref{eq:sampleCov} and multilevel sample covariance~\eqref{eq:mlSampleMoments},
one obtains the bounds
\[
\begin{split}
\|C^{\rm ML}_n - \bar{C}^{\rm ML}_n\|_{p} & \leq 
  \sum_{\ell=0}^L \Big( \|\cov_{M_\ell}[v_n^\ell] - \cov_{M_\ell}[\mfv_n^\ell]\|_{p}  \\
&+ \|\cov_{M_\ell}[v_n^{\ell-1}] - \cov_{M_\ell}[\mfv_n^{\ell-1}]\|_{p}\Big), 
\end{split}
\]
and
\[
\begin{split}
\norm{\cov_{M_\ell}[v_n^\ell] - \cov_{M_\ell}[\mfv_n^\ell]}_{p}
&\leq \frac{M_\ell}{M_\ell-1}\norm{E_{M_\ell}[v_n^\ell \otimes v_n^\ell] - E_{M_\ell}[\mfv_n^\ell \otimes \mfv_n^\ell]}_{p} \\
 & + \frac{M_\ell}{M_\ell-1} \norm{E_{M_\ell}[v_n^\ell] \otimes E_{M_\ell}[ v_n^\ell] - E_{M_\ell}[\mfv_n^\ell ] \otimes E_{M_\ell}[\mfv_n^\ell]}_{p}\\
& \eqcolon I_{1} + I_{2}.
\end{split}
\]
The bilinearity of the sample covariance yields that
\begin{equation}\label{eq:I1}
I_1 \leq 2\norm{E_{M_\ell}[(v_n^\ell - \mfv_n^\ell) \otimes v_n^\ell] }_{p}
 + 2\norm{E_{M_\ell}[\mfv_n^\ell \otimes (v_n^\ell - \mfv_n^\ell) ]}_{p}
\end{equation}
and 
\begin{equation*}\label{eq:I2}
I_2 \leq 2\norm{E_{M_\ell}[(v_n^\ell - \mfv_n^\ell)]  \otimes E_{M_\ell}[v_n^\ell] }_{p} 
 + 2\norm{E_{M_\ell}[\mfv_n^\ell\hh{]} \otimes E_{M_\ell}[(v_n^\ell - \mfv_n^\ell) ]}_{p}.
\end{equation*}
For bounding $I_1$ we use Jensen's and H\"older's inequalities: 
\[
\begin{split}
\norm{E_{M_\ell}[(v_n^\ell - \mfv_n^\ell) \otimes v_n^\ell] }_{p}^p
& = \E{ \hhNorm{E_{M_\ell}[(v_n^\ell - \mfv_n^\ell)  \otimes v_n^\ell]}^p}\\
& \leq \E{E_{M_\ell}\Big[\hNorm{v_n^\ell - \mfv_n^\ell}^p  \hNorm{v_n^\ell}^p\Big]} \\
& = \E{ \hNorm{v_n^\ell - \mfv_n^\ell}^p  \hNorm{v_n^\ell}^p} \\
& \leq \norm{v_n^\ell - \mfv_n^\ell}_{2p}^p \norm{v_n^\ell}_{2p}^p.
\end{split}
\]
The second summand of inequality~\eqref{eq:I1} is bounded similarly, and we obtain
\[
I_1 \leq 2\norm{v_n^\ell - \mfv_n^\ell}_{2p} 
\parenthesis{\norm{v_n^\ell}_{2p} + \norm{\mfv_n^\ell}_{2p}}.
\]
The $I_2$ term can also be bounded with similar steps 
as in the preceding argument so that also
\[
I_2 \leq 2\norm{v_n^\ell - \mfv_n^\ell}_{2p}  
\parenthesis{\norm{v_n^\ell}_{2p }+ \norm{\mfv_n^\ell}_{2p}}.
\]
The proof is finished by summing the contributions of $I_1$ and $I_2$ over all levels.
\end{proof}

The propagation of error in update steps of MLEnKF
is governed by the magnitude $\|\bar C_n - C_n^{\rm ML}\|_p$,
i.e., the distance between the MFEnKF prediction covariance and the MLEnKF
prediction covariance. The next lemma makes use of Lemma~\ref{lem:mlcov}
to bound the distance between the mean-field multilevel ensemble
$\{(\hmfv^{\ell-1}_{n,i},\hmfv^{\ell}_{n,i})_{i=1}^{M_\ell}\}_{\ell=0}^L$
and the MLEnKF ensemble
$\{(\hv^{\ell-1}_{n,i}, \hv^{\ell}_{n,i})_{i=1}^{M_\ell}\}_{\ell=0}^L$.
\begin{lemma}[Distance between ensembles.] 
\label{lem:ensdist}
For any $\varepsilon >0$, let $L$ and $\{M_\ell\}_{\ell=0}^L$ be defined as in Theorem~\ref{thm:main}.
If Assumption~\ref{ass:mlrates} holds, then for any $p\ge 2$ and $n \in \bbN$,
\begin{equation}
\sum_{\ell=0}^L  \|\hv_{n}^{\ell}- \hmfv_{n}^{\ell}\|_{L^p(\Omega, \cH)} \lesssim |\log(\varepsilon)|^n\varepsilon.
\label{eq:ensdist}
\end{equation}
\end{lemma}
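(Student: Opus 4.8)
The plan is to argue by induction on the assimilation step $n$, establishing the slightly stronger statement that for \emph{every} $p\ge 2$ the quantity
\[
e_n^{(p)} \coloneq \sum_{\ell=0}^L \|\hv^{\ell}_{n} - \hmfv^{\ell}_{n}\|_{L^p(\Omega,\cH)}
\]
obeys $e_n^{(p)} \lesssim |\log(\varepsilon)|^n \varepsilon$, with the implied constant allowed to depend on $p$ and $n$ but not on $\varepsilon$. The base case is immediate, since both ensembles are initialised by $\hv^{\ell}_{0} = \hmfv^{\ell}_{0} = \Pi_\ell u_0$, whence $e_0^{(p)}=0$. Because the MLEnKF and mean-field multilevel particles are driven by the same noise $\omegaLI$, comparing the predict steps \eqref{eq:DlvHatDef} and \eqref{eq:mfmlpredict} and invoking Assumption~\ref{ass:psilip}(i) contracts the per-particle distance, $\|v^{\ell}_{n+1} - \mfv^{\ell}_{n+1}\|_{L^p(\Omega,\cH)} \le c_\Psi \|\hv^{\ell}_{n} - \hmfv^{\ell}_{n}\|_{L^p(\Omega,\cH)}$; exchangeability of the particles lets me work with a single representative index throughout.

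The heart of the update step is the identity obtained by subtracting \eqref{eq:mfmlupdate} from \eqref{eq:upsamps},
\begin{equation*}
\begin{split}
\hv^{\ell}_{n+1} - \hmfv^{\ell}_{n+1}
&= (I - \Pi_\ell \mfk_{n+1} H)(v^{\ell}_{n+1} - \mfv^{\ell}_{n+1}) \\
&\quad - \Pi_\ell(\kml{n+1} - \mfk_{n+1}) H (v^{\ell}_{n+1} - \mfv^{\ell}_{n+1}) \\
&\quad + \Pi_\ell(\kml{n+1} - \mfk_{n+1})(\yTildeL{\ell}{n+1} - H \mfv^{\ell}_{n+1}),
\end{split}
\end{equation*}
where I have deliberately replaced the random gain $\kml{n+1}$ by the \emph{deterministic} mean-field gain $\mfk_{n+1}$ in the leading operator, absorbing the discrepancy into the last two terms. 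This is precisely how I circumvent the difficulty that $\kml{n+1}$ is random and correlated with $v^{\ell}_{n+1} - \mfv^{\ell}_{n+1}$. The first term is then harmless: since $\mfk_{n+1}$ is deterministic with $\|\mfk_{n+1}\|_{L(\cR_m,\cHO)}<\infty$ (Proposition~\ref{prop:reg}) and the interpolants $\Pi_\ell$ are uniformly bounded (as maps $\cHO\to\cH$, by Assumption~\ref{ass:mlrates}(ii)), the operator norm $\|I - \Pi_\ell \mfk_{n+1} H\|_{L(\cH,\cH)}$ is bounded uniformly in $\ell$, so after the predict contraction this contribution is $\lesssim \|\hv^{\ell}_{n} - \hmfv^{\ell}_{n}\|_{L^p(\Omega,\cH)}$.

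The two gain-error terms are controlled through Lemma~\ref{lem:gce}, which bounds $\|\kml{n+1} - \mfk_{n+1}\|_{L(\cR_m,\cH)}$ pathwise by $\hhNorm{\cml_{n+1} - \mfc_{n+1}}$; taking $L^{2p}(\Omega)$-norms and chaining Theorem~\ref{thm:covspliteps} with Lemma~\ref{lem:mlcov} yields $\|\kml{n+1} - \mfk_{n+1}\|_{L^{2p}(\Omega,L(\cR_m,\cH))} \lesssim \varepsilon + \sum_{\ell}\|v^{\ell}_{n+1} - \mfv^{\ell}_{n+1}\|_{L^{4p}(\Omega,\cH)} \lesssim \varepsilon + e_n^{(4p)}$, where I also use that $\|\mfv^{\ell}_{n+1}\|_{L^{4p}}$ is bounded uniformly in $\ell$ (Proposition~\ref{prop:reg}-type reasoning) and, via the bootstrap $\|v^{\ell}\|\le\|\mfv^{\ell}\|+\|v^{\ell}-\mfv^{\ell}\|$, so is $\|v^{\ell}_{n+1}\|_{L^{4p}}$. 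Applying H\"older's inequality and summing over $\ell=0,\dots,L$, the middle term gives $\|\kml{n+1}-\mfk_{n+1}\|_{L^{2p}}\sum_\ell \|v^{\ell}_{n+1}-\mfv^{\ell}_{n+1}\|_{2p} \lesssim (\varepsilon + e_n^{(4p)})\,e_n^{(2p)}$, whereas the last term gives $\|\kml{n+1}-\mfk_{n+1}\|_{L^{2p}}\sum_\ell \|\yTildeL{\ell}{n+1}-H\mfv^{\ell}_{n+1}\|_{2p}$; here each of the $L+1$ innovation summands is $O(1)$ uniformly in $\ell$, so the sum is $\lesssim L \lesssim |\log(\varepsilon)|$, producing the factor $(\varepsilon + e_n^{(4p)})|\log(\varepsilon)|$.

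Collecting the three contributions gives the recursion
\[
e_{n+1}^{(p)} \lesssim e_n^{(p)} + (\varepsilon + e_n^{(4p)})\,|\log(\varepsilon)| + (\varepsilon + e_n^{(4p)})\,e_n^{(2p)},
\]
and inserting the induction hypothesis $e_n^{(q)}\lesssim |\log(\varepsilon)|^n\varepsilon$ (valid for all $q\ge2$) shows the dominant term to be $|\log(\varepsilon)|^n\varepsilon\cdot|\log(\varepsilon)| = |\log(\varepsilon)|^{n+1}\varepsilon$, while the product term is of lower order $O(|\log(\varepsilon)|^{2n}\varepsilon^2)$; this closes the induction. The successive doubling of the integrability exponent ($p\to 2p\to 4p$) is harmless because the estimate is proved for all $p\ge2$ simultaneously and only finitely many steps are iterated. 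I expect the main obstacle to be exactly this update interplay: one must (i) detach the estimate from the random gain via the $\mfk_{n+1}$-substitution above, and (ii) correctly locate the single extra factor $|\log(\varepsilon)|\eqsim L$ per assimilation step, which originates solely from summing the $O(1)$ innovation terms $\yTildeL{\ell}{n+1}-H\mfv^{\ell}_{n+1}$ across the $L+1$ levels and is what accumulates into the $|\log(\varepsilon)|^n$ prefactor of \eqref{eq:ensdist}.
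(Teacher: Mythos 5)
Your proposal is correct and follows essentially the same route as the paper: induction on $n$ with the estimate held for all $p\ge 2$ simultaneously to absorb the $p\to 2p\to 4p$ doubling, the predict-step contraction from Assumption~\ref{ass:psilip}(i), the update decomposition with the deterministic mean-field gain $\mfk_{n+1}$ in the leading operator and the gain error controlled by Lemma~\ref{lem:gce} together with Theorem~\ref{thm:covspliteps} and Lemma~\ref{lem:mlcov}, and the extra $|\log(\varepsilon)|\eqsim L$ factor per step arising from summing the $O(1)$ innovation terms over the $L+1$ levels. Your three-term splitting of the update recombines algebraically to the paper's two-term version (the gain error acting on $\tilde y_{n+1}^{\ell}-Hv_{n+1}^{\ell}$), so the arguments coincide.
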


\begin{proof} The proof is similar to that of~\cite[Lemma
  3.10]{ourmlenkf}. For completeness, a proof is given in Appendix
  \ref{app:extras}.
\end{proof}

With the bound between MLEnKF and its multilevel MFEnKF shadow, that
conveniently for analysis consists of independent particles, we are
finally ready to prove the main result.

\begin{proof}[Proof of Theorem~\ref{thm:main}] 

  
By the triangle inequality, 
\begin{align}
\nonumber
\|\hat \mu^{\rm ML}_n [\varphi] - \hat \mfmu_n [\varphi]\|_{p} & \leq 
\|\hat \mu^{\rm ML}_n [\varphi] - \hat \mfmu^{\rm ML}_n [\varphi]\|_{p}  
 + \|\hat\mfmu^{\rm ML}_n [\varphi] - \hat \mfmu^L_n[\varphi]\|_{p} \\
&+ \|\hat \mfmu^L_n[\varphi] - \hat \mfmu_n [\varphi]\|_{p},
\label{eq:lpertri}
\end{align}
where $\hat \mfmu^{\rm ML}_n$ denotes the empirical measure associated
to the mean-field multilevel ensemble
$\{(\hmfv^{\ell-1}_{n,i},
\hmfv^{\ell}_{n,i}))_{i=1}^{M_\ell}\}_{\ell=0}^L$, and
$\hat \mfmu^L_n$ denotes the probability measure associated to
$\hmfv^L$. The two first summands on the right-hand side above
relate to the statistical error, whereas the last relates
to the bias.

By the Lipschitz continuity of the QoI $\varphi$,
the triangle inequality, Lemma~\ref{lem:ensdist},
and using the conventions $\varphi(\hv_n^{-1})=0$ and $\varphi(\hmfv_n^{-1})=0$,
the first term satisfies the following bound 
\begin{equation*} \label{eq:finensembles}
\begin{split}
\norm{\hat \mu^{\rm ML}_n [\varphi] - \hat \mfmu^{\rm ML}_n [\varphi]}_{p} 
&= \left\| \sum_{\ell=0}^L E_{M_\ell}\big[\varphi(\hv_n^{\ell}) - \varphi(\hv_n^{\ell-1}) - 
(\varphi(\hmfv_n^\ell) - \varphi(\hmfv_n^{\ell-1}))\big] \right\|_{p}\\
& \leq \sum_{\ell=0}^L \parenthesis{\norm{ \varphi(\hv_n^{\ell}) - \varphi(\hmfv_n^\ell) }_{p}
+ \norm{ \varphi(\hv_n^{\ell-1}) - \varphi(\hmfv_n^{\ell-1}) }_{p}} \\
& \leq c_\varphi \sum_{\ell=0}^L \parenthesis{\norm{ \hv_n^{\ell} - \hmfv_n^\ell }_{p}
+ \norm{ \hv_n^{\ell-1} - \hmfv_n^{\ell-1} }_{p}} \\
& \lesssim |\log(\varepsilon)|^n\varepsilon.
\end{split}
\end{equation*}

For the second summand~{of~\eqref{eq:lpertri}}, we employ the telescoping property 
\[
  \hat \mfmu^L_n[\varphi] = \sum_{\ell=0}^L \E{  \varphi(\hat \mfmu^\ell_n) - \varphi(\hat \mfmu^{\ell-1}_n)},
\]
and Lemmas~\ref{lem:disccov} and~\ref{lem:mz} to obtain
\begin{equation*}\label{eq:finvar}
 \begin{split}
\norm{\hat{\bar{\mu}}^{\rm ML}_n [\varphi] - \hat \mfmu^L_n[\varphi]}_{p}  & \leq
\sum_{\ell=0}^L \norm{E_{M_\ell}\Big[\varphi(\hmfv_n^\ell) - \varphi(\hmfv_n^{\ell-1}) - 
\E{\varphi(\hmfv_n^\ell) - \varphi(\hmfv_n^{\ell-1})} \Big]}_{p}  \\
 & \leq c \sum_{\ell=0}^L M_\ell^{-1/2}\norm{ \varphi(\hmfv_n^\ell) - \varphi(\hmfv_n^{\ell-1}) }_{p} \\
  & \leq \tilde c \sum_{\ell=0}^L M_\ell^{-1/2}\| \hmfv_n^\ell - \hmfv_n^{\ell-1}\|_{p} \\
& \lesssim \sum_{\ell=0}^L M_\ell^{-1/2}h_\ell^{\beta/2} \lesssim \varepsilon.
 \end{split}
 \end{equation*}
Finally, the bias term in~\eqref{eq:lpertri} satisfies 
\begin{equation}\label{eq:finbias}
\begin{split}
\|\hat \mfmu^L_n[\varphi] - \hat \mfmu_n [\varphi]\|_{p} 
 =  |\hat \mfmu^L_n[\varphi] - \hat \mfmu_n [\varphi]| 
 = \abs{\E{\varphi(\hmfv^L_n) - \varphi(\hmfv_n)} }
\lesssim \varepsilon,
\end{split}
\end{equation}
where the last step follows from the Lipschitz continuity of the
QoI and Lemma~\ref{lem:disccov}. 
\end{proof}

\begin{remark}\label{rem:logTermIsFishy}
  Theorem~\ref{thm:main} shows the cost-to-accuracy performance of
  MLEnKF with a disconcerting logarithmic penalty factor
  in~\eqref{eq:lperror} that grows geometrically in $n$.  The same
  penalty appears in the work~\cite{ourmlenkf}, yet the numerical
  experiments there indicate a rate of convergence that is uniform in
  $n$. The discrepancy between theory and practice may be an artifact
  of conservative bounds used in the proof of said theorem.  By
  imposing further regularity constraints on the dynamics and the QoI,
  we were able to obtain an error bound without said logarithmic
  penalty factor for an alternative finite-dimensional-state-space
  MLEnKF method with local-level Kalman
  gains~\cite{hoel2020multilevel}.  As an alternative to imposing
  further regularity constraints, we also suspect that ergodicity of
  the MFEnKF process may be used to avoid the geometrically growing
  the logarithmic penalty factor. Recently, there has been much work
  on the stability of EnKF~\cite{del2016stability, del2016stability1, tong2016nonlinear}.
\end{remark}

We conclude this section with a result on the
cost-to-accuracy performance of EnKF. It shows that
MLEnKF generally outperforms EnKF.

\begin{theorem}[EnKF accuracy vs.
  cost]\label{thm:mainEnKF}
  {Consider a Lipschitz continuous QoI $\varphi:\cH \to \bbR$, and
    suppose Assumption~\ref{ass:mlrates} holds. For a given
    $\varepsilon >0$, let $L$ and $M$ be defined under the respective
    constraints
    $L = \lceil 2\kl{d} \log_\kappa(\varepsilon^{-1})/\beta\rceil$ and
    $ M \eqsim \varepsilon^{-2}$.  Then, for any $n \in \bbN$ and
    $p \ge 2$,}
\begin{equation*}
\|\hat{\mu}^{\rm MC}_n [\varphi] - \hat{\bar{\mu}}_n [\varphi] \|_{L^p(\Omega,\cH)} \lesssim \varepsilon,
\label{eq:lperrorEnKF}
\end{equation*}
where $\hat{\mu}^{\rm MC}_n$ denotes the EnKF empirical measure,
cf.~equation~\eqref{eq:emp}, with particle evolution given by the EnKF
predict and update formulae at resolution level $L$ (i.e., using the
numerical approximation $\Psi^L$ in the prediction and the projection
operator $\Pi_L$ in the update).

The computational cost of the EnKF estimator
\[
\cost{\hat{\mu}^{\rm MC}_n [\varphi]} := M \cost{\Psi^L}
\]
satisfies
\begin{equation*}\label{eq:mlenkfCostsEnKF}
\cost{\hat{\mu}^{\rm MC}_n [\varphi]} \eqsim \varepsilon^{-2(1+(d\gamma_x\kl{+\gamma_t})/\beta)}.
\end{equation*}
\end{theorem}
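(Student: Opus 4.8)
The plan is to mirror the proof of Theorem~\ref{thm:main}, but now for the single-level estimator, which is the degenerate case $L=L$, $M_\ell = M$ concentrated at the finest level only. As before, I would split the error via the triangle inequality into a statistical part and a bias part. Specifically, I would introduce the mean-field EnKF shadow ensemble $\{\hmfv^L_{n,i}\}_{i=1}^M$, evolving with the same driving noise as the actual EnKF ensemble at resolution level $L$ but using the true limiting gain $\mfk_n$ in the update, and write
\begin{equation*}
\|\hat \mu^{\rm MC}_n(\varphi) - \hat{\bar\mu}_n(\varphi)\|_p
\le \|\hat\mu^{\rm MC}_n(\varphi) - \hat{\bar\mu}^L_n(\varphi)\|_p
 + \|\hat{\bar\mu}^L_n(\varphi) - \hat{\bar\mu}_n(\varphi)\|_p,
\end{equation*}
where $\hat{\bar\mu}^L_n$ is the empirical measure of the mean-field shadow and $\hat{\bar\mu}_n$ is the mean-field limiting measure. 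The bias term $\|\hat{\bar\mu}^L_n(\varphi) - \hat{\bar\mu}_n(\varphi)\|_p$ is controlled exactly as in~\eqref{eq:finbias}, using Lipschitz continuity of $\varphi$ together with the discretization bound~\eqref{eq:disccov1} of Lemma~\ref{lem:disccov}, giving a contribution $\lesssim h_L^{\beta/2} \lesssim \varepsilon$ by the choice $L = \lceil 2d\log_\kappa(\varepsilon^{-1})/\beta\rceil$.

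The statistical term $\|\hat\mu^{\rm MC}_n(\varphi) - \hat{\bar\mu}^L_n(\varphi)\|_p$ would be handled by an induction argument analogous to Lemma~\ref{lem:ensdist}, but now with only a single resolution level. One shows by Lipschitz continuity of $\varphi$ that this term is bounded by $c_\varphi \|\hv^L_n - \hmfv^L_n\|_p$ plus a Monte Carlo fluctuation term of order $M^{-1/2}$ coming from the sample covariance error. The single-level analogue of the covariance bound (Lemma~\ref{lem:mlcov} with $L$ collapsed to one term, combined with the single-level version of Lemma~\ref{lem:iidcover}) yields $\|C^{\rm MC}_n - \bar C_n\|_p \lesssim \varepsilon + M^{-1/2}$, and the Kalman gain error is controlled through Lemma~\ref{lem:gce}. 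With $M \eqsim \varepsilon^{-2}$ the fluctuation term is of order $M^{-1/2} \eqsim \varepsilon$, so the induction closes and delivers $\|\hv^L_n - \hmfv^L_n\|_p \lesssim |\log(\varepsilon)|^n \varepsilon$, whence $\|\hat\mu^{\rm MC}_n(\varphi) - \hat{\bar\mu}^L_n(\varphi)\|_p \lesssim |\log(\varepsilon)|^n\varepsilon$. Combining the two parts gives~\eqref{eq:lperrorEnKF} up to the same geometric logarithmic penalty flagged in Remark~\ref{rem:logTermIsFishy}, which I would absorb silently into the $\lesssim \varepsilon$ statement as the authors do.

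For the cost~\eqref{eq:mlenkfCostsEnKF}, I would invoke Proposition~\ref{prop:covcost}: a single EnKF step at level $L$ with $M$ particles costs $\eqsim M \times \mathrm{Cost}(\Psi^L)$, and by Assumption~\ref{ass:mlrates}(iii), $\mathrm{Cost}(\Psi^L) \eqsim h_L^{-(d\gamma_x + \gamma_t)}$. Since $h_L^{-\beta} \eqsim \varepsilon^{-2}$ forces $h_L^{-(d\gamma_x+\gamma_t)} \eqsim \varepsilon^{-2(d\gamma_x+\gamma_t)/\beta}$, multiplying by $M \eqsim \varepsilon^{-2}$ and noting the time horizon $N$ is fixed yields $\cost{\mathrm{EnKF}} \eqsim \varepsilon^{-2(1+(d\gamma_x+\gamma_t)/\beta)}$, which is the claimed cost. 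The main obstacle I anticipate is bookkeeping the single-level induction so that the Monte Carlo fluctuation is genuinely $O(M^{-1/2})$ with a constant uniform in $L$ (equivalently in $\varepsilon$); one must verify that the $L^{2p}$-moment bounds on $v^L_n$ and $\mfv^L_n$ appearing in the covariance estimate remain bounded as $L\to\infty$, which follows from Proposition~\ref{prop:reg} and the uniform-in-$\ell$ moment control already established for the mean-field ensemble.
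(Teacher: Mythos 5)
Your proposal follows essentially the same route as the paper's sketch: introduce the mean-field shadow ensemble $\{\hmfv^L_{n,i}\}_{i=1}^M$ driven by the same noise but updated with the limiting gain $\mfk_n$, split off the discretization bias (handled as in~\eqref{eq:finbias} via Lemma~\ref{lem:disccov}), bound the Monte Carlo fluctuation of the shadow ensemble by Lemma~\ref{lem:mz}, control the ensemble distance $\|\hv^L_{n,1}-\hmfv^L_{n,1}\|_p$ by a single-level analogue of the Lemma~\ref{lem:ensdist} induction using Lemmas~\ref{lem:gce} and~\ref{lem:mlcov}, and read off the cost from Proposition~\ref{prop:covcost} and Assumption~\ref{ass:mlrates}(iii). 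The paper organizes this as a three-term triangle inequality ($I+II+III$) whereas you fold the sampling fluctuation and the bias into one term, but the content is identical.

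One correction: you claim the single-level induction still delivers $\|\hv^L_n-\hmfv^L_n\|_p\lesssim|\log(\varepsilon)|^n\varepsilon$ and that the authors "silently absorb" this into $\lesssim\varepsilon$. In fact the geometric logarithmic penalty in Lemma~\ref{lem:ensdist} originates from the sum over the $L+1$ levels in each update step (the factor $\sum_{\ell=0}^L(\|\tilde y^\ell_n\|_{2p'}+\|H\|_{L(\cH,\cR_m)}\|v^\ell_n\|_{2p'})\eqsim L$); with a single level this factor is $\cO(1)$ uniformly in $\varepsilon$, the recursion closes with a constant depending only on $n$, and~\eqref{eq:termIIIAssumption} holds with no logarithm. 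So the theorem's bound $\lesssim\varepsilon$ is genuine, not a suppressed $|\log(\varepsilon)|^n\varepsilon$; your argument proves the stated result once this is observed, and no hand-waving of a log factor is needed.
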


\begin{proof}[Sketch of proof]
By the triangle inequality,
\[
\begin{split}
\|\hat{\bar{\mu}}_n[\varphi] -\hat{\mu}^{\rm MC}_n [\varphi]
\|_{L^p(\Omega)} & \leq 
\norm{\hat{\bar{\mu}}_n[\varphi] -  \hat{\bar{\mu}}_n^L [\varphi]}_{L^p(\Omega)}
+\norm{\hat{\bar{\mu}}^L[\varphi] - \hat{\bar{\mu}}_n^{\rm MC} [\varphi] }_{L^p(\Omega)} \\
& +  \norm{\hat{\bar{\mu}}_n^{\rm MC} [\varphi] -\hat{\mu}^{\rm MC}_n [\varphi]}_{L^p(\Omega)}
 \eqcolon I + II + III,
\end{split}
\]
where $\hat{\bar{\mu}}_n^{\rm MC}$ denotes the empirical measure associated
to the EnKF ensemble $\{\hmfv^L_{n,i} \}_{i=1}^M$ and
$\hat{\bar{\mu}}^{L}_n$ denotes the empirical measure associated to
$\hmfv^L_n$. 
It follows by inequality~\eqref{eq:finbias} that $I \lesssim \varepsilon$.

{For the second term, 
the Lipschitz continuity of the QoI $\varphi$ implies there exists 
a positive scalar $c_\varphi$ such that
$|\varphi(x)| \leq c_\varphi (1+\hNorm{x})$}.
Since $\hat{\bar{v}}_n^L \in L^p(\Omega, \cHO)$ for any $n \in \bbN$
and $p\ge 2$, it follows by Lemma~\ref{lem:mz} (on the Hilbert space $\bbR$) that
\begin{equation*}
\begin{split}
II \leq \norm{E_{M}[ \varphi(\hmfv^L_n)] - \E{\varphi(\hmfv^L_n)} }_{L^p(\Omega)}  \leq M^{-1/2} c_\varphi\norm{\hmfv^L_n }_{L^p(\Omega,\cH)} 
\lesssim \varepsilon.
\end{split}
\end{equation*}

For the last term, let us first assume that for any $p \ge 2$ and 
 $n \in \bbN$, 
\begin{equation}\label{eq:termIIIAssumption}
\norm{\hv_{n}^L -  \hmfv_{n}^L }_{L^p(\Omega,\cH)} \lesssim \varepsilon,
\end{equation}
for the {single particle dynamics $\hv_{n,1}^L$ and $\hmfv_{n,1}^L$ respectively
 associated to the EnKF ensemble $\{\hv_{n,i}^L \}_{i=1}^M$ and 
the mean-field EnKF ensemble $\{\hmfv_{n,i}^L\}_{i=1}^M$.
Then the Lipschitz continuity of $\varphi$, the fact
that $\hv_{n,1}^L, \hmfv_{n,1}^L \in L^p(\Omega, \cHO)$}
for any $n\in \bbN$ and  $p\ge 2$ holds (when assuming~\eqref{eq:termIIIAssumption}),
and the triangle inequality yield that
\[
\begin{split}
III = \norm{ E_{M}[ \varphi(\hv_n^L) - \varphi(\hmfv_n^L)] }_{L^p(\Omega)}
\leq c_\varphi \norm{\hv_n^L - \hmfv_n^L }_{L^p(\Omega,\cH)} 
\lesssim \varepsilon.
\end{split}
\]
\noindent All that remains is to verify~\eqref{eq:termIIIAssumption}, but we
omit this as it can be done by similar steps as for the proof of inequality~\eqref{eq:ensdist}.

\end{proof}

\section{MLEnKF-adapted numerical methods for a class of stochastic partial differential equations}
\label{sec:numericalAnalysis}

In this section we develop an MLEnKF-adapted version of the
exponential Euler method, for the purpose of solving a family of
stochastic reaction-diffusion equations.  For a relatively large 
class of problems, we derive an $L^p(\Omega, \cH)$-convergence rate
$\beta$ for pairwise coupled numerical solutions,
cf.~Assumption~\ref{ass:mlrates}, which will needed when
implementing MLEnKF.

\subsection{The stochastic reaction-diffusion eqquation}
\label{subsec:she}
We consider the following stochastic partial differential equation (SPDE)
\begin{equation}
\begin{split}
du &= (\Delta u + f(u)) dt + B dW, \qquad (t,x) \in (0,T] \times (0,1),\\
u(0,\cdot) &= u_0,\\
u(t,0) &= u(t,1) = 0, \mspace{164mu}  t\in (0,T],
\end{split}
\label{eq:she}
\end{equation}
where $T>0$, and the reaction $f$, the cylindrical Wiener process $W$ and 
the linear smoothing operator $B$ will be described below.  
Our base-space is $\cK = L^2(0,1)$, we denote by
$A: D(A) =H^2(0,1) \cap H^1_0(0,1)\to \cK$ 
the Laplace operator $\Delta$ with zero-valued Dirichlet boundary conditions 
and $H^k(0,1)$ denotes the
Sobolev space of order $k \in \bbN$.
A spectral decomposition of $-A$ yields 
the sequence of eigenpairs $\{(\lambda_j, \phi_j)\}_{j \in \bbN}$ where
$-A \phi_j = \lambda_j \phi_j$ with
$\phi_j \coloneq \sqrt{2} \sin(j\pi x)$ and $\lambda_j
=\pi^2 j^2$.
$\cK = \overline{\text{span}\{\phi_j\}}$, it follows that 
\[
Av = \sum_{j \in \bbN} -\lambda_j \langle \phi_j, v \rangle_{\cK} \phi_j, \quad \forall v \in D(A),
\]
and eigenpairs of the spectral decomposition 
give rise to the following family of Hilbert spaces
parametrized over $r \in \bbR$:
\begin{equation*}\label{eq:fracSpace}
\begin{split}
\cK_r \coloneq D((-A)^{r}) = \Big\{v:[0,1] \to \bbR \mid & v \text{ is }
\mathcal{B}([0,1])/\mathcal{B}(\bbR)\text{-measurable} 
\\& \text{and } \sum_{j \in \bbN}
\lambda^{2r}_j \abs{\langle \phi_j, v \rangle_{\cK}}^2 < \infty \Big\},
\end{split}
\end{equation*}
with norm $\|\cdot\|_{\cK_r} \coloneq \|(-A)^{r}(\cdot)\|_{\cK}$.
Associated with the probability space $(\Omega, \cF, \bbP)$ and 
normal filtration $\{\cF_t\}_{t \in [0,T]}$, 
the $I_{\cK}$-cylindrical Wiener process is defined by 
\[
W(t, \cdot) = \sum_{j \in \bbN}  W_j(t)\phi_j, 
\]
where $\{W_j :  [0,T] \times \Omega  \to \bbR \}_{j \in \bbN}$ is a
sequence of independent $\cF_t /\cB(\bbR)$-adapted 
standard Wiener processes.
The smoothing operator is defined by
\begin{equation}\label{eq:bDef}
B \coloneq \sum_{j \in \bbN} \lambda_j^{-b} \phi_j \otimes \phi_j,
\end{equation}
with the smoothing paramter $b \ge 0$. It may be shown that
  $B \cK_r = \cK_{r+b}$, and this implies that $B$ becomes progressively
  more smoothing the higher the value of $b$. 

In the remaining part of this section, we assume the following
conditions on the nested Hilbert spaces $\cHO \subset \cH$
and regularity conditions on the initial data $u_0$ and the
reaction term $f$ hold:
\begin{assumption}\label{ass:spde}
The Hilbert spaces $V \subset \cH$
are of the form $\cH = \cK_{r_1}$ and $V = \cK_{r_2}$
for a pair of parameters $r_1,r_2 \in \bbR$ satisfying
\[
\max(0,b-1/4) \le r_1 < r_2 < b +1/4,
\]
the initial data $u_0$ is $\cF_0 /\cB(\cHO)$-measurable
and  $ u_0\in \cap_{p\ge2} L^p(\Omega, V)$,
and the reaction satisfies
\[
f \in \mathrm{Lip}(\cK_{r_1}) \coloneq
\left\{ g \in C(\cK_{r_1}, \cK_{r_1}) \, \Big| \,
\sup_{u,v \in \cK_{r_1}, u \neq v} \frac{ \|g(u) - g(v)\|_{\cK_{r_1}}}{\|u-v\|_{\cK_{r_1}}} < \infty \right\}.
\]
\end{assumption}
   
Under Assumption~\ref{ass:spde} there exists an up to modifications
unique $(\Omega, \cF, \bbP, \{\cF_t\}_{t \in [0,T]})$-mild solution
of~\eqref{eq:she}, which in this setting corresponds to a mapping
$u:[0,T]\times [0,1] \times \Omega \to \bbR$
that is an $\cF_t/\cB(\cK_{r_2})$-predictable stochastic process satisfying
\begin{equation}\label{eq:mildSolution}
u(t) = e^{At}u(0) + \int_0^t e^{A(t-s)} f(u(s)) ds + \int_0^t
  e^{A(t-s)} B dW_s
\end{equation}
$\bbP$-almost surely for all $t\in [0,T]$.
Moreover, for any $p \ge 2$ and $r \in [r_1,r_2]$, it holds that
\begin{equation}\label{eq:sheWellP}
\|u(T,\cdot)\|_{L^p(\Omega, \cK_r )}  \leq C(1+\|u_0\|_{L^p(\Omega, \cK_r)}), 
\end{equation}
where $C>0$ depends on $p$, $r$, and $T$, cf.~\cite{jentzenNotes2015}.

\begin{remark}
  The Dirichlet zero-valued boundary conditions imposed
  in~\eqref{eq:she} only make pointwise sense provided $u(t,\cdot) \in
  \cK_{1/2+\delta}$ for some $\delta>0$ and all $t \in (0,T]$,
    $\bbP$-almost surely.  In lower-reglarity settings, e.g., when
    $u(t,\cdot) \notin C(0,1)$, said boundary condition should be interpreted
    in mild rather than pointwise sense.
\end{remark}

\subsection{The filtering problem}
\label{subsec:filtering}
We consider a discrete-time filtering problem of the
form~\eqref{eq:psiDefinition} and~\eqref{eq:obdef} with the above SPDE
as underlying model with $\Psi(u_n)$ denoting the mild solution
of~\eqref{eq:she} at $T>0$ given the initial data
$u_n \in \cap_{p \ge 2} L^p(\Omega, \cHO )$. The underlying dynamics
at observation times $0,T,2T, \ldots$ is thus described by the dynamics
\[
u_{n+1} = \Psi(u_{n}),  
\]
and the finite-dimensional partial observation of $u_n$ at time $nT$ is given by 
\begin{eqnarray}
y_n = H u_n + \eta_n, \quad \eta_n \sim N(0,\Gamma) ~~ {\rm i.i.d.}
\perp u_n, 
\label{eq:she_obs}
\end{eqnarray} 
where $H u = [H_1(u), \dots, H_m(u)]^\transpose \in \bbR^m$ with $H_i \in \cH^*$ for $i=1,2,\ldots, m$.
Note further that Assumption~\ref{ass:spde} implies that $\cHO \subset \cK_0=\cK$,
so we may represent the mild solution in the basis $\{\phi_j\}$ at any observation
time $nT$:
\[
u_{n} = \sum_{j \in \bbN} u_{n}^{(j)} \phi_j, \quad \text{where} \quad  u_{n}^{(j)} \coloneq
\langle u_{n+1}, \phi_j \rangle_{\cK}.
\]

\subsection{Spatial truncation}
\label{subsec:spatiallyDiscrete}
Before introducing a fully discrete MLEnKF approximation method for the
filtering problem, let us warm up by having a quick look at 
an exact-in-time-truncated-in-space approximation method.
It consists of the hierarchy of subspaces
$\cH_\ell = \cP_\ell \cH = \text{span}(\{\phi_{k}\}_{k=1}^{N_\ell})$,
$\Pi_\ell = \cP_\ell$ and 
\[
\Psi^\ell(u_n) \coloneq \cP_\ell u_{n+1} = \sum_{j=1}^{N_\ell} u_{n+1}^{(j)}
\phi_j, \quad \text{for any} \quad u_n \in \cH.
\]

To verify that this approximation method can be used in 
the MLEnKF framework, it remains to verify
Assumptions~\ref{ass:psilip} and~\ref{ass:mlrates}(i)-(ii),
and to determine the rate parameter $\beta>0$.
The equation~\eqref{eq:mildSolution}, the regularity $f \in \rm{Lip}(\cK_{r_1})$,
the inequality
\begin{equation}\label{eq:lipExpA}
\sup_{v \in \cK_{r}\setminus \{0\} } \frac{ \| e^{At} v
  \|_{\cK_r}}{\|v\|_{\cK_r}} \leq 1, \quad \text{for all} \quad  r \in \bbR \quad \text{and}  \quad t \ge 0,
\end{equation}
and Jensen's inequality imply that for any $p\ge2$, there exists a $C>0$ such that
\[
\begin{split}
&\|\Psi^\ell(u_0) - \Psi^\ell(v_0)\|_{L^p(\Omega,\cH)} \leq  \|\Psi(u_0)-
\Psi(v_0)\|_{L^p(\Omega,\cH)} \\
&\leq  \|u_0 -v_0\|_{L^p(\Omega,\cH)} + \left\| \int_{0}^T e^{A(t-s)} (f(u(s))-f(v(s)))  ds \right\|_{L^p(\Omega,\cH)} \\
&\leq \|u_0 -v_0\|_{L^p(\Omega,\cH)} +   \int_{0}^T
    \|f(u(s))-f(v(s))\|_{L^p(\Omega,\cH)} ds   \\
&= \|u_0 -v_0\|_{L^p(\Omega,\cH)} + 
 C \int_{0}^T \| u(s)-v(s)\|_{L^p(\Omega, \cH)}  ds.
\end{split}
\]

Hence by Gronwall's inequality, 
\[
\|\Psi^\ell(u_0) - \Psi^\ell(v_0)\|_{L^p(\Omega,\cH)}\leq C
\|u_0-v_0\|_{L^p(\Omega, \cH)},
\]
which verifies Assumption~\ref{ass:psilip}(i).
Assumption~\ref{ass:psilip}(ii) follows
from~\eqref{eq:sheWellP}. 
To verify that Assumption~\ref{ass:mlrates}(i) holds with rate 
$\beta= 4(r_2-r_1)$, observe that for any $p\ge 2$,
\begin{equation}\label{eq:spatialRate}
\begin{split}
\|\Psi^\ell(u_0) - \Psi(u_0) \|_{L^p(\Omega, \cH)} 
&= \| \sum_{j>N_\ell} \lambda_j^{r_1} u_1^{(j)} \phi_j \|_{L^p(\Omega, \cK)} \\
&\leq  \lambda_{N_\ell}^{-(r_2-r_1)}  \| \sum_{j> N_\ell} \lambda_{j}^{r_2} u_1^{(j)} \phi_j \|_{L^p(\Omega, \cK)} \\
& \lesssim N_\ell^{-2(r_2-r_1)} \| u_1 \|_{L^p(\Omega, \cHO)} \\
& \lesssim (1 + \| u_0 \|_{L^p(\Omega, \cHO)}) h_\ell^{2(r_2-r_1)},
\end{split}
\end{equation}
where the last inequality follows from~\eqref{eq:sheWellP} and
$h_\ell \eqsim N_\ell^{-1}$.  Assumption~\ref{ass:mlrates}(ii) follows
by a similar shift-space argument.  (Relating to
Assumption~\ref{ass:mlrates}(iii), we leave the question of the
computational cost of this method open for the time being, but see
Section~\ref{subsec:linearFilteringEx} for treatment in one example.)

\subsection{A fully-discrete approximation method}
\label{subsec:fullyDiscrete}
The fully-discrete approximation method
consists of Galerkin approximation in 
space and numerical integration in
time by the exponential Euler scheme, cf.~\cite{jentzen2009, jentzen2011}.
Given a timestep $\Delta t_\ell = T/J_\ell$, let $\{U_{\ell, k}
\}_{k=0}^{J_\ell} \subset \cH_\ell$ with $U_{\ell,0} = \cP_\ell u_0$ 
denote the numerical approximation SPDE~\eqref{eq:she} on level $\ell$.
It is given by the scheme
\[
\begin{split}
U_{\ell,k+1} &= e^{A_\ell \Delta t_\ell } U_{\ell,k} + A_\ell^{-1}
\parenthesis{e^{A_\ell \Delta t_\ell } -I} f_\ell (U_{\ell, k})  
\\
&+ \underbrace{\cP_\ell \int_{k\Delta t_\ell}^{(k+1)\Delta t_\ell} e^{A
    ((k+1)\Delta t_\ell -s)}  B  dW(s) }_{\eqcolon R_{\ell,k}},
\end{split}
\]
where $A_\ell \coloneq \cP_\ell A$ and $f_\ell \coloneq \cP_\ell f$.
The $j$-th mode of the scheme $U^{(j)}_{\ell,k} \coloneq \langle U_{\ell, k},
\phi_j\rangle_{\cK}$ for $j =1,2,\ldots, N_\ell$,  
is given by 
\begin{equation}\label{eq:modeUFine}
U_{\ell, k+1}^{(j)} = e^{-\lambda_j \Delta t_\ell } U_{\ell,k}^{(j)} + \frac{1-
  e^{-\lambda_j \Delta t_\ell }}{\lambda_j}  \parenthesis{f_\ell(U_{\ell, k} )}^{(j)}
+ R_{\ell,k}^{(j)},
\end{equation}
for $k=0,1,\ldots, J_\ell-1$ with i.i.d.
\begin{equation}\label{eq:rMDist}
R_{\ell,k}^{(j)} \sim N\parenthesis{0, \frac{1-e^{-2\lambda_j \Delta
      t_\ell}}{2\lambda_j^{1+2b}}},
\end{equation}
for $j \in \{1,2,\ldots, N_\ell\}$, $k
\in \{0,1,\ldots, J_\ell-1\}$ and $\ell = 0,1, \ldots$
In view of the mode-wise numerical solution, 
the $\ell$-th level solution operator for the fully-discrete 
approximation method is defined by
\[
\widetilde \Psi^\ell(u_0) \coloneq \sum_{j=1}^{N_\ell} U_{\ell, J_\ell}^{(j)} \phi_j.
\]

\subsubsection{Coupling of levels} For a hierarchy of temporal resolutions
$\{\Delta t_\ell = T/J_\ell\}$ with $J_\ell = 2^\ell J_0$, pairwise
correlated solutions $(\widetilde \Psi^{\ell-1}(u_0), \widetilde
\Psi^\ell(u_0))$ are obtained through first generating the fine-level
driving noise $\{R_{\ell,k}\}_{k}$ and the solution $\widetilde
\Psi^\ell(u_0)$ by~\eqref{eq:modeUFine} and thereafter computing the
coarse level solution conditioned on $\{R_{\ell,k}\}_{k}$.
Since $J_\ell = 2J_{\ell-1}$, it follows that
\[
\begin{split}
&\cP_{\ell-1} \int_{k\Delta t_{\ell-1}}^{(k+1)\Delta t_{\ell-1}}  e^{A ((k+1)\Delta
  t_{\ell-1} -s)}  BdW(s) \\
& = e^{A_{\ell-1}\Delta t_\ell} \cP_{\ell-1} \int_{2k\Delta t_{\ell}}^{(2k+1)\Delta t_{\ell}}  e^{A
  ((2k+1)\Delta t_{\ell} -s)}  B dW(s) \\
& \quad + \cP_{\ell-1} \int_{(2k+1)\Delta t_{\ell}}^{2(k+1)\Delta t_{\ell}}  e^{A ((k+1)\Delta t_{\ell-1} -s)} B dW(s).
\end{split}
\]
Consequently
\[
\begin{split}
R_{\ell-1,k} \Big| (R_{\ell,2k}, R_{\ell,2k+1}) & = 
 e^{A_{\ell-1} \Delta t_\ell} \cP_{\ell-1}R_{\ell,2k} + \cP_{\ell-1} R_{\ell,2k+1},
\end{split}
\]
and the conditional coarse-level solution $\widetilde \Psi^{\ell-1}(u_0)
|\{R_{\ell,k}\}_{k}$ is of the form
\begin{equation*}\label{eq:schemeUCoarse}
\begin{split}
U_{\ell-1, k+1} &=  e^{A_{\ell-1} \Delta t_{\ell-1} } U_{\ell-1,k} +
A_{\ell-1}^{-1} \parenthesis{e^{A_{\ell-1} \Delta t_{\ell-1}} - I} f_{\ell-1}( U_{\ell-1, k}) \\
& \quad + e^{A_{\ell-1} \Delta t_\ell} \cP_{\ell-1} R_{\ell,2k} + \cP_{\ell-1}R_{\ell,2k+1}
\end{split}
\end{equation*}
for $k= 0,1,\ldots,J_{\ell-1}-1$, and with the initial condition $U_{\ell-1,0} = \cP_{\ell-1} u_0$.
In other words, the scheme for the $j$-th mode of the coarse level
solution is given by
\begin{equation}\label{eq:modeUCoarse}
\begin{split}
U_{\ell-1, k+1}^{(j)} &=  e^{-\lambda_j \Delta t_{\ell-1} } U_{\ell-1,k}^{(j)} +
\frac{1-e^{-\lambda_j \Delta t_{\ell-1}}}{\lambda_j} \parenthesis{f_{\ell-1}( U_{\ell-1, k})}^{(j)} \\
& + e^{-\lambda_j \Delta t_\ell} R_{\ell,2k}^{(j)} + R_{\ell,2k+1}^{(j)},
\end{split}
\end{equation}
for $j =1,2,\ldots,N_{\ell-1}$ and $k= 0,1,\ldots,J_{\ell-1}-1$, and 
the coarse level solution takes the form
\[
\widetilde \Psi^{\ell-1}(u_0) = \sum_{j=1}^{N_{\ell-1}} U_{\ell-1, J_{\ell-1}}^{(j)} \phi_j.
\]
This coupling approach may be viewed as an extension of the multilevel
coupling of Ornstein--Uhlenbeck processes for stochastic differential
equations~\cite{muller2015improving}.

\subsubsection{Assumptions and convergence rates}
To show that the fully-discrete numerical method may be used in
MLEnKF, it remains to verify that Assumptions~\ref{ass:psilip}
and~\ref{ass:mlrates}(i)-(ii) hold. 

{\bf Assumption~\ref{ass:psilip}(i):} Let $U_{\ell, k}$ and
$\bar U_{\ell, k}$ denote solutions at time $t=k\Delta t_\ell$ 
of the scheme~\eqref{eq:modeUFine} with respective initial data 
$U_{\ell, 0} = \cP_\ell u_0$ and $\bar U_{\ell, 0} = \cP_\ell v_0$
fpr some
$u_0,v_0 \in L^p(\Omega, \cH)$. Then, by~\eqref{eq:lipExpA}, 
and the properties: (a)
for all $\ell \ge 0$ and $v \in \cH_\ell$
\begin{equation*}\label{eq:expLambdaBound}
\| A_\ell^{-1}( e^{A_\ell \Delta t_\ell} - I) v \|_{\cH} =
\norm{ \int_0^{\Delta t_\ell} e^{A_\ell s} v ds }_{\cH} \leq
\int_{0}^{\Delta t_\ell} \| e^{As} v\|_{\cH} ds \leq \|v\|_{\cH} \Delta t_\ell,
\end{equation*}
and (b) $f \in \rm{Lip}(\cH,\cH)$; there exists a $C>0$
such that 
\[
\begin{split}
\|U_{\ell, J_\ell} - \bar U_{\ell, J_\ell} \|_{L^p(\Omega, \cH)} &\leq
(1+C\Delta t_\ell)\| U_{\ell, J_{\ell}-1} - \bar U_{\ell,J_\ell -1}\|_{L^p(\Omega, \cH)} \\
& \leq (1+C\Delta t_\ell)^{T/\Delta t_\ell} \|\cP_\ell(u_0-
v_0)\|_{L^p(\Omega, \cH)}\\
& \leq e^{CT} \|u_0-v_0\|_{L^p(\Omega, \cH)}.
\end{split}
\]
Consequently, for every $p \ge 2$, there exists a $c_{\Psi}>0$ 
such that 
\[
\norm{\widetilde \Psi^\ell(u_0) - \widetilde \Psi^\ell(v_0) }_p \le c_{\Psi}\norm{u_0 - v_0}_p
\]
holds for all $\ell \ge 0$ and $u_0,v_0 \in L^p(\Omega,\cH)$.

{\bf Assumption~\ref{ass:psilip}(ii):}
Under the regularity
constraints imposed by Assumption~\ref{ass:spde}, it holds for all $\ell \in
\bbN$ and $U_{\ell,k} = \sum_{j=1}^{N_\ell} U^{(j)}_{\ell,k} \phi_k$ that
\begin{equation*}\label{eq:boundedNum}
\max_{k \in \{0,1,\ldots, J_\ell\}} \|U_{\ell,k}\|_{L^p(\Omega, \cK_{r} )}  \leq C(1+\|u_0\|_{L^p(\Omega, \cK_{r})}), \quad \forall p
\ge 2 \text{ and } r \in [r_1, r_2),
\end{equation*}
where $C>0$ depends on $r$ and $p$, but not on $\ell$,
cf.~\cite[Lemma 8.2.21]{jentzenNotes2015}.

{\bf Assumption~\ref{ass:mlrates}(i):} 
We begin by 
introducing the auxiliary $\ell$-th level exact-in-time Galerkin approximation 
\[
u^\ell(t) \coloneq  e^{A_\ell t}u_0 + \int_0^t e^{A_\ell (t-s)}
f_\ell(u^\ell(s)) ds + \int_0^t e^{A_\ell (t-s)} dW_s, \quad t \in [0,T],
\] 
and the notation $\widehat \Psi^\ell(u_0) \coloneq u^\ell(T)$.
Assumption~\ref{ass:spde} and~\cite[Corollary 8.1.12]{jentzenNotes2015} imply that
for any $p \ge 2$ and $r \in [r_1, r_2]$
\[
\sup_{\ell\ge 0}  \norm{\widehat \Psi^\ell(u_0)}_{L^p(\Omega, \cK_r)} \lesssim (1+
\norm{u_0}_{L^p(\Omega, \cK_r)}).
\]
The triangle inequality and~\eqref{eq:spatialRate} yield that
\begin{equation*}\label{eq:assIFirstBound}
  \norm{\Psi(u_0) - \widetilde \Psi^\ell(u_0) }_{L^p(\Omega, \cH)}
  \leq \norm{\Psi(u_0) - \widehat \Psi^\ell (u_0) }_{L^p(\Omega,\cH)}
    + \norm{ \widehat \Psi^\ell(u_0) -  \widetilde \Psi^\ell(u_0)}_{L^p(\Omega,\cH)},
\end{equation*}
and by~\cite[Corollary 8.1.11-12 and Theorem
  8.2.25]{jentzenNotes2015}\footnote{In the notation of the lecture
  notes~\cite{jentzenNotes2015}, the parameters $\gamma$, $\beta$ and
  $\eta$, which describe different properties than in this paper, take
  the values $\gamma=r_1$, $\beta=b-1/4$ and $\eta=2(\gamma-\beta)$.},
it respectively holds that for any $p\ge 2$
\[
\norm{\Psi(u_0) -\widehat \Psi^\ell (u_0) }_{L^p(\Omega,\cH)} \lesssim 
(1+ \|u_0\|_{L^p(\Omega, \cHO)}) N_\ell^{2(r_1-r_2)}, 
\]
and 
\begin{equation}\label{eq:convRF}
\norm{\widehat \Psi^\ell(u_0) -  \widetilde \Psi^\ell(u_0) }_{L^p(\Omega,\cH)} \lesssim
 (1+ \|u_0\|_{L^p(\Omega, \cH)})J_\ell^{r_1-r_2}.
\end{equation}
This verifies Assumption~\ref{ass:mlrates}(i) as it leads to the following bound:
for any $p\ge 2$,
\[
\|\Psi(u_0) - \widetilde \Psi^\ell(u_0) \|_{L^p(\Omega, \cH)} 
\lesssim  (1+ \|u_0\|_{L^p(\Omega, \cHO)}) (N_\ell^{2(r_1-r_2)} +
J_\ell^{r_1-r_2}).
\]
Assumption~\ref{ass:mlrates}(ii) only depends on the projection
operator, and thus follows from~\eqref{eq:spatialRate}.

\subsection{Linear forcing}

For the remaining part of this section consider the linear case
$f(u)=u$ of the filtering problem in Section~\ref{subsec:filtering}.
We derive explicit values for the rate exponents $\beta$, $\gamma_x$
and $\gamma_t$ when applying MLEnKF with either the exact-in-time-truncated-in-space
approximation method in Section~\ref{subsec:spatiallyDiscrete} or the
fully-discrete approximation method in Section~\ref{subsec:fullyDiscrete}. 

The exact solution of the $j$-th mode for this linear case is
\begin{eqnarray*}
u_{n+1}^{(j)} = e^{(1-\lambda_j)T} u_n^{(j)} + \xi_n^{(j)}, \quad 
\xi_n^{(j)} \sim N\left[0,\frac{\lambda_j^{-2b}}{2
    (\lambda_j-1)}(1-e^{2(1- \lambda_j )T})\right] \perp u_n^{(j)}.
\label{eq:she_exactmode}
\end{eqnarray*}
Although we see that the underlying dynamics can be solved exactly, the filtering
problem is still non-trivial since correlations between the modes
$\{u_{n+1}^{(j)}\}_j$ will arise from the assimilation of observations~\eqref{eq:she_obs},
unless the observation operator is of the special form
$H(\cdot) = [H_1(\cdot), \ldots, H_m(\cdot)]^\transpose$ with all operator components of the form
$H_i=\phi_j^*$ for some $j\in \bbN$.

Since the Galerkin and spatial
approximation methods coincide in the linear setting,
meaning $\Psi^\ell  = \widehat \Psi^\ell$,
it holds by~\eqref{eq:spatialRate} that for any $p \ge 2$,
\begin{equation}\label{eq:aNumber}
\|\Psi(u_0) -\widehat \Psi^\ell (u_0) \|_{L^p(\Omega,\cH)} \lesssim 
(1+ \|u_0\|_{L^p(\Omega, \cHO)}) N_\ell^{2(r_1-r_2)}.
\end{equation}
Let us next show that the 
time discretization convergence rate~\eqref{eq:convRF}
is improved from $r_1-r_2$ in the above nonlinear setting
to $1$ in the linear setting. We begin by studying the properties of the sequence
$\{\cP_\ell \widetilde \Psi^{m}(u_0)\}_{m=\ell}^\infty$ for a fixed 
$\ell \in \bbN$.
The $j$-th mode projected difference of coupled
solutions for $m>\ell$ is given by
\[
\langle \cP_\ell(\widetilde \Psi^{m}(u_0) - \widetilde \Psi^{m-1}(u_0)), \phi_j \rangle_{\cK} =
\begin{cases} U^{(j)}_{m,J_{m}} - U^{(j)}_{m-1,J_{m-1}}, & \text{if } j\le N_\ell,\\
  0& \text{otherwise},
  \end{cases}
\]
and the difference can be bounded as follows:
\begin{lemma}\label{lem:lemmaConv}
  Consider the SPDE~\eqref{eq:she} with $f(u)=u$, and other
  assumptions as stated in section~\ref{subsec:she}.
  Then for any $\bar u_0 \in L^2(\Omega, \cH)$
  and $m \in \bbN$, the sequence 
  \[
  I_{m,j} \coloneq \left\langle \cP_{m-1} \prt{\widetilde \Psi^{m}(\bar u_0) - \widetilde \Psi^{m-1}(\bar u_0)}, \phi_j \right\rangle_{\cK}, \quad j =1,2,\ldots
  \]
   can be split into three parts
   \[
   I_{m,j}= I_{m,j,1} +I_{m,j,2} + I_{m,j,3},
   \]
   where $I_{m,j,1}, I_{m,j,2},$ and $I_{m,j,3}$ for every $j=1,2,\ldots$
   is a triplet of mutually independent random variables and
  $I_{m,j,1} = I_{m,j,2} = I_{m,j,3}=0$ for all $j> N_{m-1}$.
  Furthermore, there exists a constant $c>0$ that depends on
  $T>0$ and $\lambda_1>1$ such that for any 
  $m \in \bbN$ and all $j \le N_{m-1}$,
  \[
  |I_{m,j,1}|  \le c |\bar u_0^{(j)}| \Delta t_m,
  \]
  and $I_{m,j,2}$ and $I_{m,j,3}$ are mean zero-valued Gaussians with variance bounded
  by 
  \begin{equation}\label{eq:varBound}
  \max\prt{ \E{I_{m,j,2}^2},  \E{I_{m,j,3}^2}} \le c \frac{\Delta t_m^2}{\lambda_{j}^{1+2b}}.
  \end{equation}
   \end{lemma}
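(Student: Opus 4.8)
The plan is to exploit that with $f(u)=u$ the whole scheme decouples across spectral modes, so that for each fixed $j$ both the fine solution $\widetilde\Psi^{m}(\bar u_0)$ and the coupled coarse solution $\widetilde\Psi^{m-1}(\bar u_0)$ are \emph{scalar affine recursions} driven by the single family of fine-level noises $\{R_{m,k}^{(j)}\}$. Introducing the one-step multipliers
\[
a_m \coloneq e^{-\lambda_j \Delta t_m} + \frac{1-e^{-\lambda_j \Delta t_m}}{\lambda_j}, \qquad a_{m-1} \coloneq e^{-\lambda_j \Delta t_{m-1}} + \frac{1-e^{-\lambda_j \Delta t_{m-1}}}{\lambda_j},
\]
with $\Delta t_{m-1}=2\Delta t_m$, the mode recursions \eqref{eq:modeUFine} and \eqref{eq:modeUCoarse} read $U_{m,k+1}^{(j)}=a_m U_{m,k}^{(j)}+R_{m,k}^{(j)}$ and $U_{m-1,k+1}^{(j)}=a_{m-1}U_{m-1,k}^{(j)}+e^{-\lambda_j\Delta t_m}R_{m,2k}^{(j)}+R_{m,2k+1}^{(j)}$. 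One checks $0<a_m,a_{m-1}<1$ since $\lambda_j\ge\lambda_1>1$. First I would solve both recursions by the discrete variation-of-constants formula; since $\cP_{m-1}$ annihilates the modes $j>N_{m-1}$, we have $I_{m,j}=0$ there, while for $j\le N_{m-1}$ the quantity $I_{m,j}=U_{m,J_m}^{(j)}-U_{m-1,J_{m-1}}^{(j)}$ is an explicit affine function of $\bar u_0^{(j)}$ and of $\{R_{m,k}^{(j)}\}_{k=0}^{J_m-1}$.

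Next I would perform the splitting. Re-index the fine Duhamel sum into even/odd pairs by setting $p=J_{m-1}-1-k$, so that it aligns with the coarse sum (which couples $e^{-\lambda_j\Delta t_m}R_{m,2k}^{(j)}+R_{m,2k+1}^{(j)}$). Using $J_m=2J_{m-1}$, this identifies
\[
I_{m,j,1} \coloneq \prt{a_m^{\,J_m}-a_{m-1}^{\,J_{m-1}}}\bar u_0^{(j)}, \quad
I_{m,j,2}\coloneq\sum_{p=0}^{J_{m-1}-1}\prt{a_m A^{p}-e^{-\lambda_j\Delta t_m}B^{p}}R^{(j)}_{m,2(J_{m-1}-1-p)},
\]
with $A\coloneq a_m^2$, $B\coloneq a_{m-1}$, and $I_{m,j,3}\coloneq\sum_{p=0}^{J_{m-1}-1}\prt{A^{p}-B^{p}}R^{(j)}_{m,2(J_{m-1}-1-p)+1}$. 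Because the $R^{(j)}_{m,k}$ are i.i.d.~by \eqref{eq:rMDist} and independent of $\bar u_0$, the term $I_{m,j,1}$ (a function of $\bar u_0^{(j)}$ only), the even-index combination $I_{m,j,2}$, and the odd-index combination $I_{m,j,3}$ are mutually independent; the latter two are linear combinations of centred Gaussians, hence mean-zero Gaussian, and all three vanish for $j>N_{m-1}$.

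For the deterministic term I would write $I_{m,j,1}=(A^{J_{m-1}}-B^{J_{m-1}})\bar u_0^{(j)}$ and compute exactly $A-B=-\lambda_j^{-1}(1-\lambda_j^{-1})(1-e^{-\lambda_j\Delta t_m})^2$, so $0<A\le B<1$. The factorization $A^{N}-B^{N}=(A-B)\sum_{i=0}^{N-1}A^{i}B^{N-1-i}$ with $N=J_{m-1}$ gives $|A^{N}-B^{N}|\le|A-B|\,N B^{N-1}$. The consistency factor $|A-B|\le\lambda_j^{-1}(1-e^{-\lambda_j\Delta t_m})^2$ carries a power of $\lambda_j$ (it is $\lesssim\lambda_j\Delta t_m^2$ when $\lambda_j\Delta t_m\le1$), while $N B^{N-1}$ supplies the parabolic smoothing $B^{J_{m-1}}\lesssim e^{-c(\lambda_j-1)T}$. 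Combining them through $\sup_{x\ge\lambda_1}x\,e^{-c(x-1)T}<\infty$ and $\sup_{M}M\rho^{M-1}<\infty$, and splitting into the regimes $\lambda_j\Delta t_m\le1$ and $\lambda_j\Delta t_m>1$, yields $|I_{m,j,1}|\le c|\bar u_0^{(j)}|\Delta t_m$ with $c=c(T,\lambda_1)$.

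Finally the variances: each equals $\sum_p(\text{coeff})^2\cdot\tfrac{1-e^{-2\lambda_j\Delta t_m}}{2\lambda_j^{1+2b}}$. For $I_{m,j,3}$ I would use $|A^{p}-B^{p}|\le|A-B|\,pB^{p-1}$, the summation $\sum_{p}p^2B^{2(p-1)}\lesssim(1-B^2)^{-3}$, together with $1-B^2\gtrsim(\lambda_j-1)\Delta t_m$ and $|A-B|^2\lesssim\lambda_j^{-2}(1-e^{-\lambda_j\Delta t_m})^4$, to land at the claimed $\lesssim\Delta t_m^2\lambda_j^{-(1+2b)}$; the extra contribution in $I_{m,j,2}$ coming from $a_m-e^{-\lambda_j\Delta t_m}=\lambda_j^{-1}(1-e^{-\lambda_j\Delta t_m})$ is handled identically via $\sum_p B^{2p}\lesssim(1-B^2)^{-1}$. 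I expect the main obstacle to be precisely these uniform-in-$j$ bounds: the per-step consistency error grows with $\lambda_j$ whereas the step count grows like $\Delta t_m^{-1}$, so a naive accumulation loses a factor of $\lambda_j$; the decisive point is to offset that loss against the exponential smoothing $B^{J_{m-1}}$, which is what necessitates the two-regime analysis and the boundedness of $x\mapsto x^{k}e^{-cx}$ on $[\lambda_1,\infty)$.
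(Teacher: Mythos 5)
Your proposal follows essentially the same route as the paper's Appendix~B proof: solve the decoupled scalar mode recursions explicitly, split the difference into the initial-data term and the even/odd-indexed noise combinations, exploit the cancellation $a_m^2-a_{m-1}=(1-\lambda_j)\lambda_j^{-2}(1-e^{-\lambda_j\Delta t_m})^2$, and trade the $\lambda_j$-growth of the per-step consistency error against the geometric decay of powers of $B$. The one correction needed is in the variance step: the inequality $1-B^2\gtrsim(\lambda_j-1)\Delta t_m$ fails when $\lambda_j\Delta t_m$ is large and should be replaced by $1-B\ge(1-\lambda_1^{-1})(1-e^{-\lambda_j\Delta t_{m}})$ followed by $(1-e^{-\lambda_j\Delta t_m})/\lambda_j\le\Delta t_m$, which is exactly what the paper uses and slots directly into your argument.
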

\begin{proof}
See Appendix~\ref{app:proofLemmaConv}.
\end{proof}

By Lemma~\ref{lem:lemmaConv} and Assumption~\ref{ass:spde},
there exists a $C>0$ depending on $p$, $T$, $\lambda_1$ and $b +1/4-r_1$
such that for any $m > \ell$ and $u_0 \in \cap_{p\ge 2} L(\Omega, \cHO)$,
\begin{equation}\label{eq:impBound}
\begin{split}
  & \norm{\cP_\ell\prt{ \widetilde \Psi^m(u_0) - \widetilde \Psi^{m-1}(u_0) }}_{L^p(\Omega,\cH)}^2 \leq
  \norm{\cP_{m-1}\prt{ \widetilde \Psi^m(u_0) - \widetilde \Psi^{m-1}(u_0) }}_{L^p(\Omega,\cH)}^2 \\
  &\leq \norm{\sum_{j=1}^{\infty} (I_{m,j,1} +I_{m,j,2}+I_{m,j,3}) \phi_j }_{L^p(\Omega,\cH)}^2\\
  &\le 3 \norm{ \sum_{j=1}^{\infty} I_{m,j,1} \phi_j }_{L^{p}(\Omega,\cH)}^2
  + 3 \norm{ \sum_{j=1}^{\infty} I_{m,j,2} \phi_j }_{L^{p}(\Omega,\cH)}^2
  +3 \norm{ \sum_{j=1}^{\infty} I_{m,j,3} \phi_j }_{L^{p}(\Omega,\cH)}^2\\
  &\le 3c^2\Delta t_m^2 \norm{ u_0 }_{L^{p}(\Omega,\cH)}^2 +
   3\norm{ \sum_{j=1}^{\infty} I_{m,j,2}^2 \langle \phi_j,\phi_j \rangle_\cH  }_{L^{p/2}(\Omega)} 
   + 3\norm{ \sum_{j=1}^{\infty} I_{m,j,3}^2 \langle \phi_j,\phi_j \rangle_\cH  }_{L^{p/2}(\Omega)} \\
  & \le 3c^2\Delta t_m^2 \norm{ u_0 }_{L^{p}(\Omega,\cH)}^2 +
   3 \sum_{j=1}^{\infty} \prt{\norm{ I_{m,j,2}^2}_{L^{p/2}(\Omega)} + \norm{ I_{m,j,2}^2}_{L^{p/2}(\Omega)}}
   \lambda_j^{2r_1}\\
   &\le  3c^2 \Delta t_m^2\prt{ \norm{ u_0 }_{L^{p}(\Omega,\cH)}^2 + 2\sum_{j=1}^{\infty} \lambda_j^{2(r_1-b)-1}}\\
 & \le  C\prt{1+\norm{ u_0 }_{L^{p}(\Omega,\cH)}}^2 \Delta t_{m}^2.
 \end{split}
\end{equation}
Here, the sixth inequality follows from $I_{m,j,2}$ and
$I_{m,j,3}$ being mean-zero-valued Gaussians with variance bounded
by~\eqref{eq:varBound}, which implies that for any $p\ge 2$,
there exists a constant $C>0$ depending on $p$ such that
\[
\max_{r \in \{2,3\}} \norm{ I_{m,j,r}^2}_{L^{p/2}(\Omega)}= \max_{r \in \{2,3\}} \norm{ I_{m,j,r}}_{L^{p}(\Omega)}^2  \le C \frac{\Delta t_m^2}{\lambda_j^{1+2b}}
\]
holds for all $j \in \bbN$.
And the last inequality follows from the assumption $r_1<b+1/4$, which implies that
$2(r_1-b)-1<-1/2$ and hence 
\[
  \sum_{j=1}^\infty \lambda_j^{2(r_1-b)-1} <  \sum_{j=1}^\infty (j^2)^{2(r_1-b)-1} <\infty.
\]

From inequality~\eqref{eq:impBound} we deduce that
$\{\cP_\ell \widetilde \Psi^{m}(u_0)\}_{m=\ell}^\infty$ is $L^p(\Omega, \cH)$-Cauchy
and that there exists a constant
$C>0$ depending on $p$, $T$, $\lambda_1$ and $b +1/4-r_1$
such that
\begin{equation}\label{eq:timeDiscRate}
\begin{split}
\|\cP_\ell \Psi(u_0) - \widetilde \Psi^\ell(u_0) \|_{L^p(\Omega, \cH)}
& \leq \sum_{m=\ell+1}^\infty \|\cP_\ell(\widetilde \Psi^{m}(u_0) - \widetilde \Psi^{m-1}(u_0) ) \|_{L^p(\Omega, \cH)}\\
& \leq C (1+\|u_0\|_{L^p(\Omega, \cH )})  \sum_{m=\ell+1}^\infty  \Delta t_{m} \\
& \leq C (1+\|u_0\|_{L^p(\Omega, \cH )})  \Delta t_{\ell}\sum_{k=1}^\infty  2^{-k} \\
& = C(1+\|u_0\|_{L^p(\Omega, \cH )})  J_\ell^{-1}.
\end{split}
\end{equation}
In view of the preceding inequality and~\eqref{eq:aNumber}
we obtain the following $L^p$-strong convergence rate for the fully
discrete scheme:
\begin{theorem}
  Consider the SPDE~\eqref{eq:she} with $f(u)=u$ and other assumptions as
  stated in section~\ref{subsec:she}. Then for all $p\ge2$ and $\ell \in \bbN\cup\{0\}$,
  there exists a $C>0$ such that
\begin{equation}\label{eq:assIError}
\|\Psi(u_0) - \widetilde \Psi^\ell(u_0) \|_{L^p(\Omega, \cH)}  \leq 
C (1+ \|u_0\|_{L^p(\Omega, \cHO)})(N_\ell^{2(r_1-r_2)} + J_\ell^{-1}), \quad \forall \ell \ge 0,
\end{equation}
where $C$ depends on $r_1,r_2$ and $p$, but not on $\ell$.  
\end{theorem}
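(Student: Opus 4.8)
The plan is to combine the two convergence estimates already established in this subsection by inserting the intermediate, exact-in-time but spatially truncated solution $\cP_\ell \Psi(u_0)$ and applying the triangle inequality:
\[
\|\Psi(u_0) - \widetilde \Psi^\ell(u_0)\|_{L^p(\Omega,\cH)}
\le \|\Psi(u_0) - \cP_\ell\Psi(u_0)\|_{L^p(\Omega,\cH)}
+ \|\cP_\ell\Psi(u_0) - \widetilde \Psi^\ell(u_0)\|_{L^p(\Omega,\cH)}.
\]
The first summand is the purely spatial Galerkin-truncation error and the second is the purely temporal error. Recalling that in the linear setting $\widehat\Psi^\ell(u_0)=\cP_\ell\Psi(u_0)$, both quantities have in effect already been controlled in the preceding discussion, so the remaining task is to assemble them.

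First I would bound the spatial term. Since $\Psi(u_0)-\cP_\ell\Psi(u_0) = (I-\cP_\ell)\Psi(u_0)$ merely discards the modes $j>N_\ell$, the estimate~\eqref{eq:aNumber} (which rests on the mode-wise truncation bound~\eqref{eq:spatialRate}) gives $\|\Psi(u_0)-\cP_\ell\Psi(u_0)\|_{L^p(\Omega,\cH)} \lesssim (1+\|u_0\|_{L^p(\Omega,\cHO)}) N_\ell^{2(r_1-r_2)}$; the gain of two powers of $\lambda_{N_\ell}$ per unit of regularity comes from passing from $\cK_{r_2}$ to $\cK_{r_1}$ together with the a priori bound~\eqref{eq:sheWellP}.

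For the temporal term I would invoke~\eqref{eq:timeDiscRate}, which carries the real content. Its argument writes $\cP_\ell\Psi(u_0)-\widetilde \Psi^\ell(u_0)$ as the telescoping sum $\sum_{m>\ell}\cP_\ell(\widetilde \Psi^{m}(u_0)-\widetilde \Psi^{m-1}(u_0))$, bounds each increment in $L^p(\Omega,\cH)$ by $C(1+\|u_0\|_{L^p(\Omega,\cH)})\Delta t_m$ via the mode-wise splitting of Lemma~\ref{lem:lemmaConv} and the estimate~\eqref{eq:impBound}, and then sums the geometric series $\sum_{m>\ell}\Delta t_m \lesssim \Delta t_\ell \lesssim J_\ell^{-1}$. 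This yields $\|\cP_\ell\Psi(u_0)-\widetilde \Psi^\ell(u_0)\|_{L^p(\Omega,\cH)} \lesssim (1+\|u_0\|_{L^p(\Omega,\cH)}) J_\ell^{-1}$. The main obstacle is not the telescoping but the per-increment bound itself: one must control the deterministic drift contribution $I_{m,j,1}$ and the two independent mean-zero Gaussian noise contributions $I_{m,j,2},I_{m,j,3}$ separately, and verify that the sum over modes of the variances~\eqref{eq:varBound} converges, which is precisely where the hypothesis $r_1<b+1/4$ enters through $\sum_{j} \lambda_j^{2(r_1-b)-1}<\infty$.

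Finally I would add the two bounds and absorb the norm, using that $\cHO\subset\cH$ carries the stronger norm, so $\|u_0\|_{L^p(\Omega,\cH)} \le \|u_0\|_{L^p(\Omega,\cHO)}$. This produces the claimed estimate~\eqref{eq:assIError} with a constant $C$ that, since $\lambda_1=\pi^2$ and $T$ are fixed and $b$ is part of the problem data, depends only on $p,r_1,r_2$ and is independent of $\ell$.
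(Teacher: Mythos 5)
Your proposal is correct and follows exactly the paper's route: the theorem is obtained by the triangle inequality through the intermediate exact-in-time Galerkin solution $\widehat\Psi^\ell(u_0)=\cP_\ell\Psi(u_0)$ (which coincides with the spatial truncation in the linear case), bounding the spatial term by~\eqref{eq:aNumber} and the temporal term by the telescoping estimate~\eqref{eq:timeDiscRate} built on Lemma~\ref{lem:lemmaConv} and~\eqref{eq:impBound}. Your accounting of where $r_1<b+1/4$ enters and of the absorption of $\|u_0\|_{L^p(\Omega,\cH)}$ into $\|u_0\|_{L^p(\Omega,\cHO)}$ matches the paper's argument.
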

\begin{remark}
  To the best of our knowledge, the $L^p$-strong time-discretization
  convergence rate~\eqref{eq:timeDiscRate} is an
  improvement of the literature in two ways. First, for $p=2$,
  it is slightly higher than $\cO(\log(\Delta t^{-1}) \Delta
  t)$, which is the best rate in the literature, cf.~\cite{jentzen2009}.
  And second, this is the first proof of order 1 $L^p$-strong time-discretization
  convergence rate for any $p\ge 2$.
\end{remark}

\smallskip

\subsubsection{Error equilibration}
\label{subsec:errorEquilibration}
The temporal and spatial discretization errors of~\eqref{eq:assIError}
are equlibrated through determining the base $\kappa >1$ that induces a sequence
$\{N_\ell = N_0 \kappa^\ell\}$ such that 
$N_\ell^{2(r_1-r_2)} \eqsim J_\ell^{-1} \eqsim 2^{-\ell}$. The solution is
$\kappa = 2^{(r_2-r_1)/2}$, which yields the following $L^p$-strong
convergence rate in~\eqref{eq:assIError}:
\[
\|\Psi(u_0) - \widetilde \Psi^\ell(u_0) \|_{L^p(\Omega, \cH)}  \lesssim
(1+ \|u_0\|_{L^p(\Omega, \cHO)}) h_\ell^{2(r_2-r_1)}. 
\]
In view of Assumption~\ref{ass:mlrates},
MLEnKF with the fully-discrete approximation method
yields the  convergence rate
$\beta = 4(r_2-r_1)$ 
and the computational cost rates $\gamma_x = 1$ 
and $\gamma_t = 2(r_2-r_1)$ in the considered linear setting.

\begin{remark} [MLEnKF time]
Note that one could consider applying the SDE version of
\cite{ourmlenkf} to a fixed finite approximation of the SPDE.
However, in this case we would be incurring a fixed baseline cost
associated to that discretization.  In comparison to using a single
level method, there would be a gain in efficiency, as a result of
using the multilevel identity with respect to the time discretization.
But this would be still substantially less efficient than accounting
also for the spatial approximation in the multilevel method, as we do
in the method considered here.
\end{remark}

\section{Numerical examples}
\label{sec:numex}

In this section we present numerical performance studies of EnKF and
MLEnKF applied to two different filtering problems with underlying
dynamics given by the SPDE~\eqref{eq:she}. In the first example, the
reaction term of the SPDE is linear, and in the second example we
consider a nonlinear, and thus more challenging, reaction term.

\subsection{Discretization parameters and the relationship between
  computational cost and accuracy}
\label{subsec:discretizationParams}
If we neglect the logarithmic factor in~\eqref{eq:lperror},
as is motivated by Remark~\ref{rem:logTermIsFishy},
then Theorems~\ref{thm:main} and~\ref{thm:mainEnKF}
respectively imply the following relations between mean squared error (MSE)
and computational cost
\begin{equation*}
  \cost{\hat \mu^{\rm ML}_n [\varphi] }^{\min(1,\beta/(d \gamma_x + \gamma_t))}
  \|\hat \mu^{\rm ML}_n [\varphi] - \hat \mfmu_n [\varphi] \|_2^2 \lesssim
\begin{cases}
  1 & \text{if } \beta \neq d\gamma_x +\gamma_t, \\
  L^3 & \text{if } \beta = d\gamma_x +\gamma_t,
\end{cases}
\end{equation*}
and
\begin{equation*}
\cost{\hat \mu^{\rm MC}_n [\varphi]}^{\beta/(\beta +d \gamma_x +\gamma_t)}
  \|\hat \mu^{\rm MC}_n [\varphi] - \hat \mfmu_n [\varphi] \|_2^2
      \lesssim 1.
\end{equation*}
In other words, 
\begin{equation}\label{eq:mlMseCost}
  \|\hat \mu^{\rm ML}_n [\varphi] - \hat \mfmu_n [\varphi] \|_2^2
   \lesssim \begin{cases}
    \cost{\hat \mu^{\rm ML}_n [\varphi]}^{-1} & \text{if } \beta > d\gamma_x + \gamma_t,\\
    L^3 \cost{\hat \mu^{\rm ML}_n [\varphi]}^{-1} & \text{if } \beta = d\gamma_x + \gamma_t,\\
    \cost{\hat \mu^{\rm ML}_n [\varphi]}^{-\beta/(d\gamma_x+\gamma_t)} & \text{if } \beta < d\gamma_x + \gamma_t,
  \end{cases}
\end{equation}
and 
\begin{equation}\label{eq:mcMseCost}
  \|\hat \mu^{\rm MC}_n [\varphi] - \hat \mfmu_n [\varphi] \|_2^2
    \lesssim \cost{\hat \mu^{\rm MC}_n[\varphi]}^{-\beta/(\beta+ d\gamma_x + \gamma_t)}.
\end{equation}
For all test
problems, we use the observation-time interval $T=1/2$, $N = 40$ observation times, $N_\ell=
2^{\ell+2}$, and, when relevant $J_\ell = 2^{\ell+2}$ (i.e., for the fully-discrete
numerical method). The approximation error, which we refer to as the
mean squared error (MSE),
is defined as the sum of the squared QoI error over the
observation times and averaged over 100 realizations of the respective
filtering methods.  That is,
\[
\text{MSE(MLEnKF)} \coloneq \frac{1}{100}\sum_{i=1}^{100} \sum_{n=0}^{N}
  \abs{\hat\mu^{\rm ML}_{n,i} [\varphi] - \hat \mfmu_n [\varphi]}^2 \approx \sum_{n=0}^{N}
  \|\hat\mu^{\rm ML}_{n} [\varphi] - \hat \mfmu_n [\varphi]\|_2^2,
\]
where $\{\hat \mu^{\rm ML}_{\cdot,i} [\varphi]\}_{i=1}^{100}$ is a sequence of i.i.d.~QoI
evaluations induced from i.i.d.~realizations of the MLEnKF. And similarly,
\[
\text{MSE(EnKF)} \coloneq \frac{1}{100}\sum_{i=1}^{100} \sum_{n=0}^{N}
\abs{\hat \mu^{\rm MC}_{n,i} [\varphi] - \hat \mfmu_n [\varphi]}^2
\approx \sum_{n=0}^{N} \|\hat \mu^{\rm MC}_{n} [\varphi] - \hat \mfmu_n [\varphi]\|_2^2.
\]
In the examples below we numerically verify that the considered
numerical methods respectively fulfill~\eqref{eq:mlMseCost}
and~\eqref{eq:mcMseCost}, when the above computational cost expressions
are replaced/approximated by the wall-clock runtime of the
computer implementations of the respective methods. More precisely,
we numerically verify that the following approximate asymptotic inequalities hold:
\begin{equation}\label{eq:mlMseRuntime}
\text{MSE(MLEnKF)}  \lessapprox \begin{cases}
    \text{Runtime(MLEnKF)}^{-1}  & \text{if } \beta > d\gamma_x + \gamma_t,\\
    L^3 \text{Runtime(MLEnKF)}^{-1}  & \text{if } \beta = d\gamma_x + \gamma_t,\\
    \text{Runtime(MLEnKF)}^{-\beta/(d\gamma_x+\gamma_t)} & \text{if } \beta < d\gamma_x + \gamma_t,
  \end{cases}
\end{equation}
and
\begin{equation}\label{eq:mcMseRuntime}
  \|\hat \mu^{\rm MC}_n [\varphi] - \hat \mfmu_n [\varphi] \|_2^2
    \lessapprox \text{Runtime(EnKF)}^{-\beta/(\beta+ d\gamma_x + \gamma_t)}.
\end{equation}

\subsection{Linear filtering problems}
\label{subsec:linearFilteringEx}

We consider the filtering problem in Section~\ref{subsec:filtering} with the
linear forcing $f(u)=u$ in the underlying dynamics~\eqref{eq:she},
smoothing parameter $b=1/2$, approximation space parameters
$r_1=1/4+\upsilon$ and $r_2 = 3/4-\upsilon$ with $\upsilon = 10^{-4}$,
observation functional
\[
H =\delta_{0.5} = \sqrt{2} \sum_{j=1}^\infty \sin(j\pi/2) \phi_j^*,
\]
observation noise parameter $\Gamma=0.5$,
QoI
\begin{equation}\label{eq:QoIEx1And2}
\varphi = 1^* = \sum_{j \in \bbN} \frac{\sqrt{2}(1-\cos(j\pi))}{j\pi} \phi_{j}^*, 
\end{equation}
and initial data
\[
u_0(x) = 1-2\left|x-\frac{1}{2}\right| =  \sum_{j\in \bbN} (-1)^{j-1} \frac{4\sqrt{2}}{((2j-1)\pi)^2}  \phi_{2j-1}(x).
\]
We note that $H, \varphi \in \cH^*$ and $u_0 \in \cHO$.
Figure~\ref{fig:linearExampleSim} illustrates one exact-in-time
simulation of the SPDE.
\begin{figure}[htbp]
  \centering
  \includegraphics[width=0.96\textwidth]{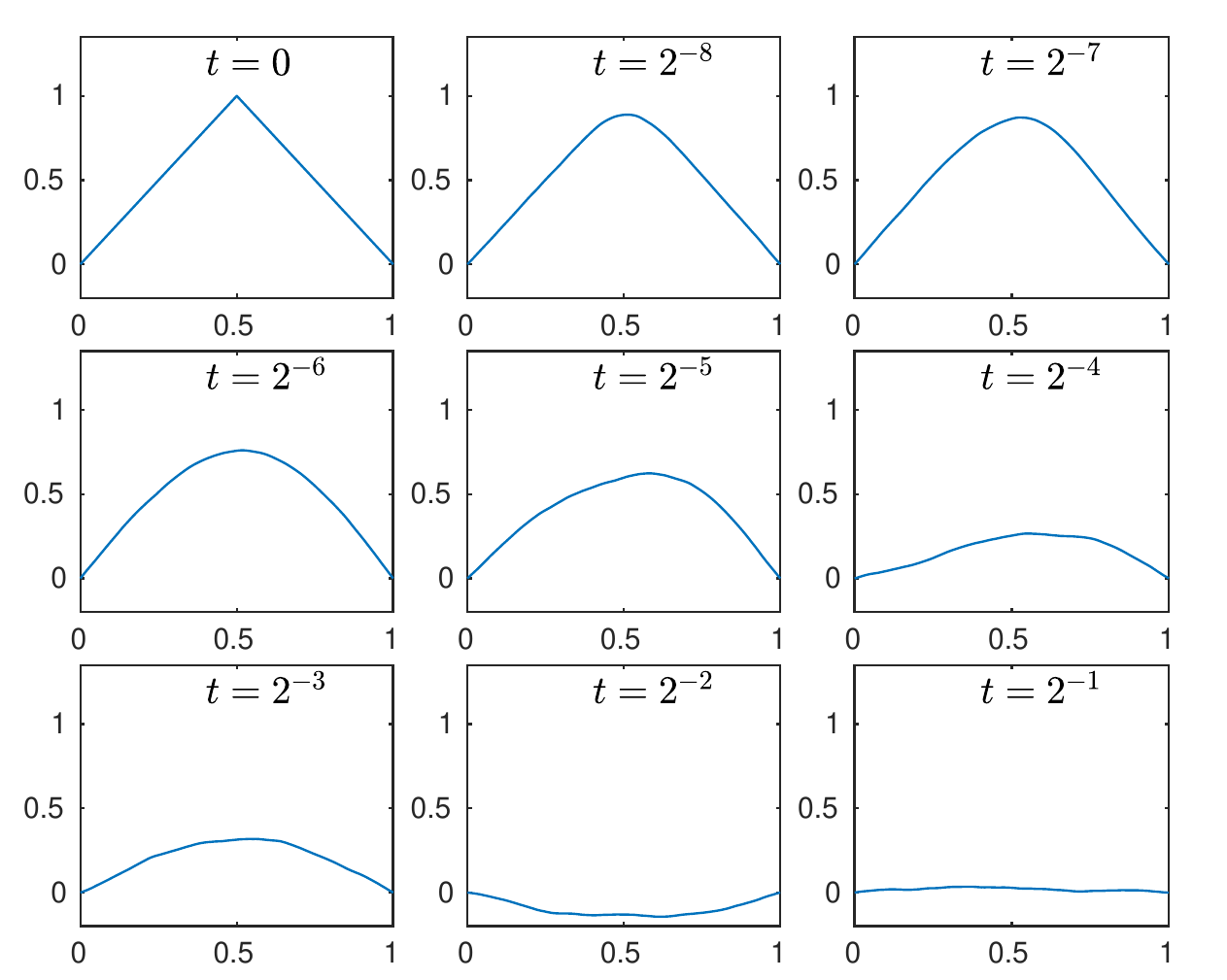}
  \caption{Exact-in-time simulation of the SPDE in
    Section~\ref{subsec:linearFilteringEx}
    over one observation-time interval with spatial resolution
    $N_{10}=2^{12}$.}
  \label{fig:linearExampleSim}
\end{figure}

By the approximation $\beta = 2(r_2-r_1) \approx 2$, application of
the error equilibration in Section~\ref{subsec:errorEquilibration}
yields ($d\gamma_x = 1$,$\gamma_t = 1$) for the fully-discrete method
and ($d\gamma_x = 1$, $\gamma_t = 0$) for the spatially-discrete
method.  Figure~\ref{fig:ex2MseVsCostSpat} and the left subfigure of
Figure~\ref{fig:ex2MseVsCostFull} display the runtime-to-MSE
performance for the spatially-discrete and fully-discrete methods,
respectively.
\begin{figure}[]
  \centering
  \includegraphics[width=0.58\textwidth]{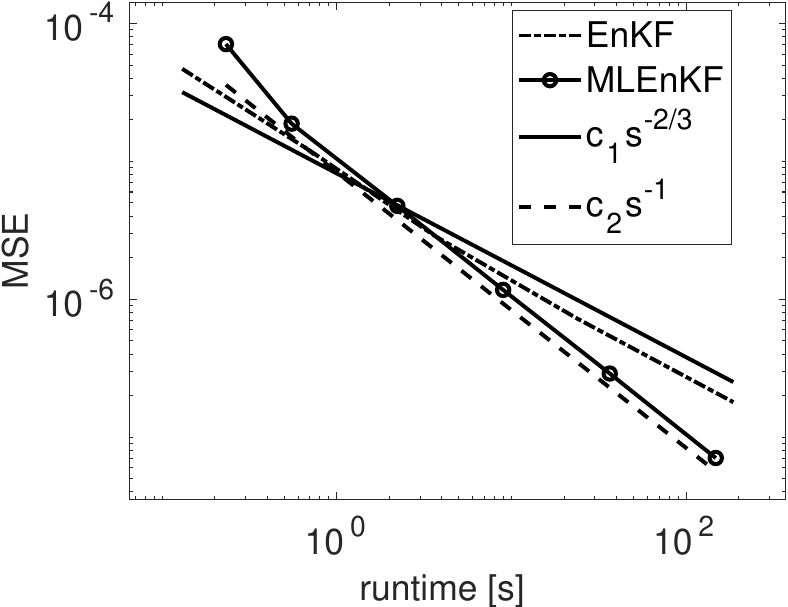}
  \caption{Runtime-to-MSE comparison for the filtering problem in
    Section~\ref{subsec:linearFilteringEx} using the spatially-discrete
    method.}
  \label{fig:ex2MseVsCostSpat}
\end{figure}
The right subfigure of Figure~\ref{fig:ex2MseVsCostFull} displays the
graph of
\[
(\text{Runtime(MLEnKF)}, \, \text{MSE(MLEnKF)}\times \text{Runtime(MLEnKF)}/L^3),
\]
(where $\text{MSE(MLEnKF)}$ in the second argument denotes the ``MSE'' obtained for a given ``Runtime''). 
The numerical observations are consistent with the approximate theoretical
predictions~\eqref{eq:mlMseRuntime} and~\eqref{eq:mcMseRuntime}.
\begin{figure}[htbp]
  \centering
  \includegraphics[width=0.51\textwidth]{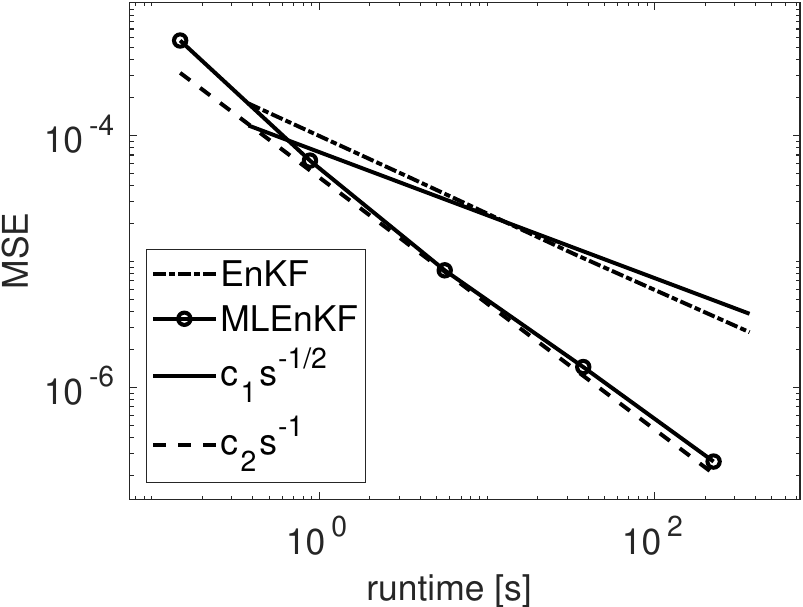}
  \includegraphics[width=0.48\textwidth]{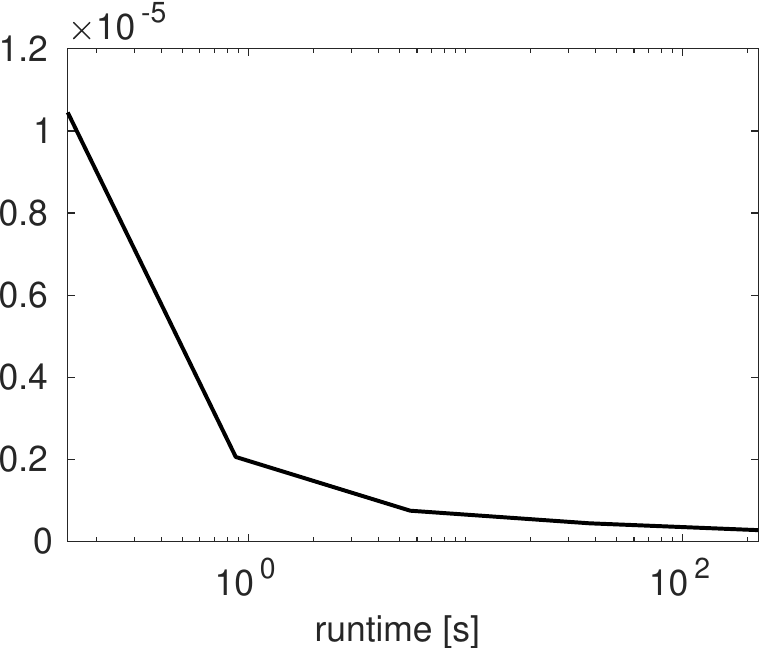}
  \caption{ Left: Runtime-to-MSE comparison for the filtering problem in
    Section~\ref{subsec:linearFilteringEx} using the fully-discrete
    numerical method.  Right: Graph of $(\text{Runtime(MLEnKF)}, \,
    \text{MSE(MLEnKF)}\times \text{Runtime(MLEnKF)}/L^3)$
    for the fully-discrete numerical method. 
        Note the y-axis here, and in future such plots, has linear scaling.}
  \label{fig:ex2MseVsCostFull}
\end{figure}

The reference-solution sequence $\{\hat \mfmu_n [\varphi] \}_{n=1}^N$
that is needed to estimate the MSE in the above figures, is
approximated by Kalman filtering the subspace $\cH_{12} \subset \cH$,
which is an $N_{12} = 2^{14}$-dimensional subspace.  This yields an
accurate approximation, since when the underlying
dynamics~\eqref{eq:she} is linear with Gaussian additive noise, the
full-space Kalman filter distribution equals the reference MFEnKF
distribution $\hat \mfmu$. Furthermore, EnKF and MLEnKF solutions are
computed with ensemble particles at no higher spatial resolution than
$\cH_{9}$ in the cost-to-accuracy studies.

\subsection{A nonlinear filtering problem}
\label{subsec:nonlinearFilteringEx}
We seek the mild solution to the following nonlinear SPDE with periodic boundary conditions 
\begin{equation}
\begin{split}
du &= \prt{(\Delta-I) u + \sin(\pi u)} dt + B  dW, \qquad (t,x) \in (0,T] \times (0,1),\\
u(0,x) &=  4(x-1/2)^2\\
u(t,0) &= u(t,1), \mspace{240mu}  t\in (0,T],
\end{split}
\label{eq:spdeNonlinear}
\end{equation}
where $W$ and $B$ are described below.  Here, the operator $-A =
(I-\Delta)$ is defined as a mapping $A: H^2(0,1) \cap
H^1_{\text{per}}(0,1)\to \cK = L^2(0,1)$, where $H^1_{\text{per}}(0,1)
:= \{f \in H(0,1) \mid (f-f(0)) \in H^1_0(0,1) \}$.  The periodic
boundary condition is different from the zero-valued boundary
condition in~\eqref{eq:she}, and, in order to
spectrally decompose $-A$, we now express the base-space
$\cK =L^2(0,1)$ by the closure of the span of the Fourier basis
\begin{equation}\label{eq:currentBasis}
  \phi_{k}(x) = \begin{cases} 1, & k=1,\\
    \sqrt{2} \cos(2k\pi x), & k=2,4,6\ldots\,,\\
    \sqrt{2} \sin(2(k-1)\pi x),  & k=3,5,7,\ldots
  \end{cases}
\end{equation}
The operator $-A$ is spectrally decomposed by
\[
-A\phi_k = \lambda_k \phi_k
\]
with
\[
  \lambda_k = \begin{cases}
    1, & k=1,\\
    1+(2k\pi)^2, & k=2,4,6, \ldots\, ,\\
    1 + (2(k-1)\pi)^2 & k=3,5,7, \ldots
    \end{cases}
\]
As in Section~\ref{subsec:she}, we introduce the family of
Hilbert spaces parametrized in $r\in \bbR$
\begin{equation*}
  \begin{split}
\cK_r \coloneq D((-A)^{r}) = \Big\{v:[0,1] \to \bbR \mid & v \text{ is }
\mathcal{B}([0,1])/\mathcal{B}(\bbR)\text{-measurable} 
\\& \text{and } \sum_{j \in \bbN}
\lambda^{2r}_j \abs{\langle \phi_j, v \rangle_{\cK}}^2 < \infty \Big\},
\end{split}
\end{equation*}
with norm $\|\cdot\|_{\cK_r} \coloneq \|(-A)^{r}(\cdot)\|_{\cK}$. As
smoothing operator $B$, we consider~\eqref{eq:bDef} with parameter
$b=1/4$, where $W$ denotes an
$I_{\cK}$-cylindrical Wiener process (both $B$ and $W$ are of course
expanded in the currently considered basis~\eqref{eq:currentBasis}).
We consider the approximation spaces
\[
 \cH = \cK_0  \quad \text{and} \quad \cHO = \cK_{(1-\nu)/2},
\]
where $\nu=10^{-4}$, the QoI~\eqref{eq:QoIEx1And2}
and the observation operator
\[
H = 1_{x>0.5}^* = \frac{1}{2}\phi_0^* + \sum_{k=1}^\infty \frac{\sqrt{2}}{\pi k} \phi_{2k+1}^*.
\]
The spectral representation of the initial data
\[
u(0,\cdot) = \frac{1}{3} + \sum_{k=1} \frac{2\sqrt{2}}{(\pi k)^{2}} \phi_{2k}(\cdot)
\]
implies that $u(0,\cdot) \in \cK_{(1-\nu)/2}$.
Figure~\ref{fig:nonlinearExampleSim} illustrates one simulation of
the SPDE by the numerical scheme described below. 
\begin{figure}[htbp]
  \centering
  \includegraphics[width=0.96\textwidth]{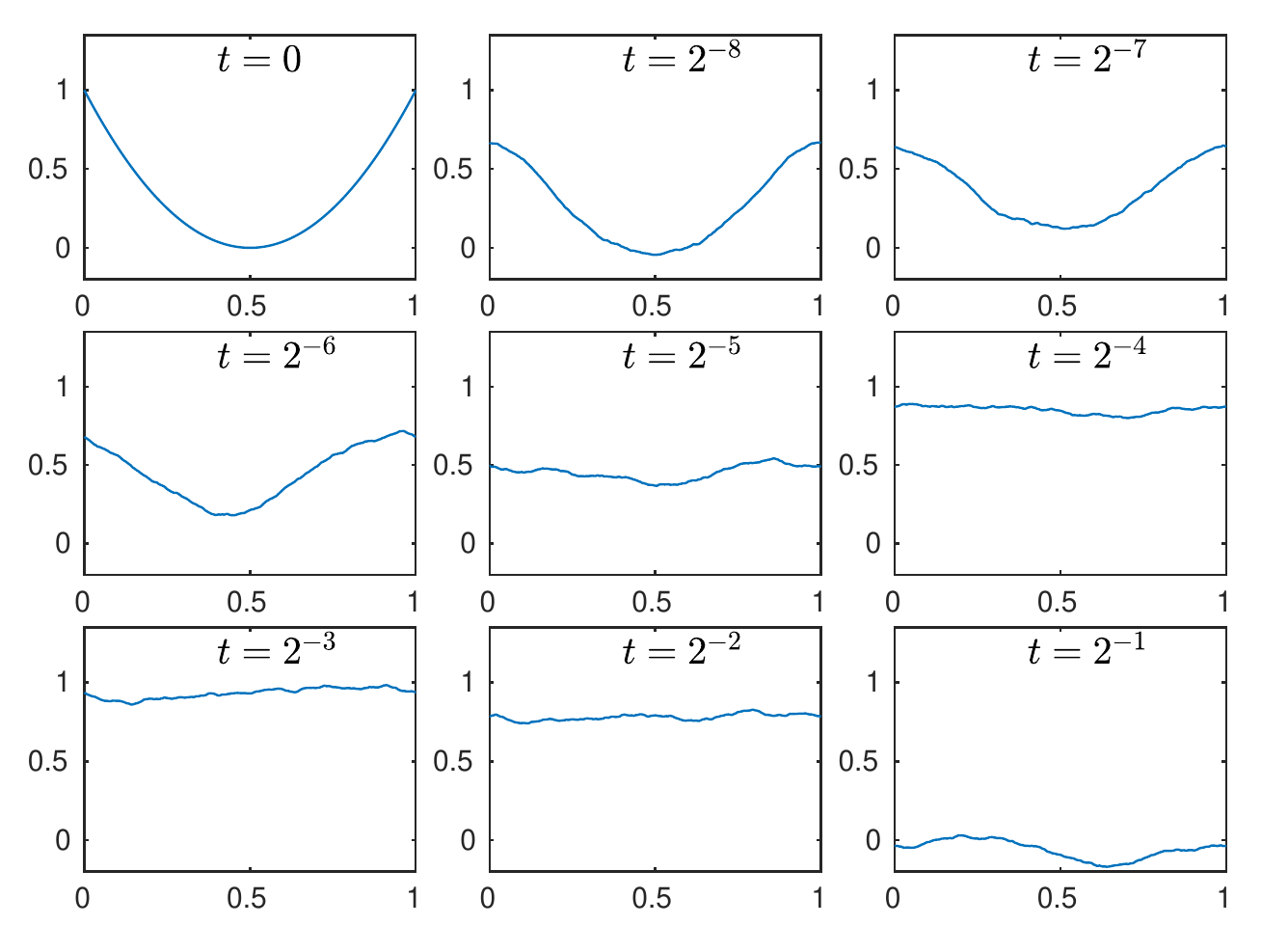}
  \caption{ Simulation of the SPDE~\eqref{eq:spdeNonlinear} over one
    observation-time interval by the numerical scheme
    in Section~\ref{subsec:numSchemeNonlinear} on resolutions
    $N_{10}=J_{10}=2^{12}$.}
  \label{fig:nonlinearExampleSim}
\end{figure}

By the Lipschitz-continuity of the reaction term, it follows that
Assumption~\ref{ass:spde} is fulfilled. Moreover, the well-posedness theory for the
zero-valued boundary condition for the SPDE~\eqref{eq:she} extends to
the current setting, and so does the theory for the fully-discrete
exponential Euler method in Section~\ref{subsec:fullyDiscrete},
cf.~\cite{jentzenNotes2015}.

\subsubsection{Numerical scheme}
\label{subsec:numSchemeNonlinear}
We apply the coupled fully
discrete approximation method described in
Section~\ref{subsec:fullyDiscrete}, but, due to the nonlinearity of the reaction
term $f(u)=\sin(\pi u)$, the spectral representation in the coupled
scheme~\eqref{eq:modeUFine} and~\eqref{eq:modeUCoarse} needs to be
approximated. Namely, the approximation of $[(f_\ell(U_{\ell,k}))^{(1)}, \ldots,
  (f_\ell(U_{\ell,k}))^{(N_\ell)}]$
is obtained by application of the fast Fourier transform (FFT) as follows:
\begin{enumerate}

\item[1.] Given the spectral representation $[U_{\ell, k}^{(1)}, \ldots, U_{\ell, k}^{(N_\ell)}]$ 
  compute the physical-space-on-uniform-mesh representation by the inverse FFT
  \[
    [U_{\ell, k}(0), U_{\ell, k}(1/N_\ell), \ldots, U_{\ell, k}(1 - N_\ell^{-1})]  = \text{IFFT}[U_{\ell, k}^{(1)}, \ldots, U_{\ell, k}^{(N_\ell)}].
  \]

\item[2.] Evaluate the nonlinear reaction term in physical space and approximate the
  spectral representation by FFT
  \begin{multline*}
    [(f_\ell(U_{\ell,k}))^{(1)}, \ldots, (f_\ell(U_{\ell,k}))^{(N_\ell)}]\\
    \approx \text{FFT}[f(U_{\ell, k}(0)), f(U_{\ell, k}(1/N_\ell)), \ldots, f(U_{\ell, k}(1 - N_\ell^{-1}))].
  \end{multline*}
\end{enumerate}
The spectral approximation of the coarse-level reaction term is
obtained analogously. Due to the FFT approximation error in step
2.~above, we cannot directly obtain the rate parameter $\beta$ from
the analysis in Section~\ref{subsec:fullyDiscrete}.  To infer $\beta$,
we instead perform numerical studies of the
$L^p(\Omega,\cH)$-convergence rate of the coupled-level difference of
the FFT-based fully-discrete method $\widetilde \Psi^\ell(u) -
\widetilde \Psi^{\ell-1}(u)$ towards $0$, where the expectation is estimated
with the Monte Carlo method with $M=10^5$ samples:
\begin{equation}\label{eq:testNonlinearRatesMc}
  \prt{\frac{1}{M} \sum_{i=1}^{M} \|\widetilde \Psi^\ell(u_0; \omega_i)
    - \widetilde \Psi^{\ell-1}(u_0; \omega_i)\|_{\cH}^p}^{1/p} \approx
  \|\widetilde \Psi^\ell(u) - \widetilde \Psi^{\ell-1}(u)\|_{L^p(\Omega,\cH)}
\end{equation}
Recalling that $h_\ell^{-1} \eqsim N_\ell \eqsim J_\ell= 2^{2+\ell}$
for the numerical solver $\widetilde \Psi^\ell$, we infer from the
results of the numerical study~\eqref{eq:testNonlinearRatesMc}, which
is provided in Figure~\ref{fig:testNonlinearRates}, that
\begin{equation}\label{eq:testNonlinearRates}
\|\widetilde \Psi^\ell(u) - \Psi(u)\|_p \lessapprox h_\ell^{\beta/2}
\end{equation}
with $\beta =2$. Further numerical studies, which we do not include here,
indicate that the right hand side of~\eqref{eq:testNonlinearRates} may
be decomposed into $\cO(N_\ell^{-1} + J_\ell^{-1})$. On the basis of
these observations, the configuration of discretization
parameters for this problem, $N_\ell \eqsim J_\ell$, is in alignment
with the efficiency-optimized error equilibration strategy in
Section~\ref{subsec:errorEquilibration}.
\begin{figure}[htbp]
  \centering
  \includegraphics[width=0.68\textwidth]{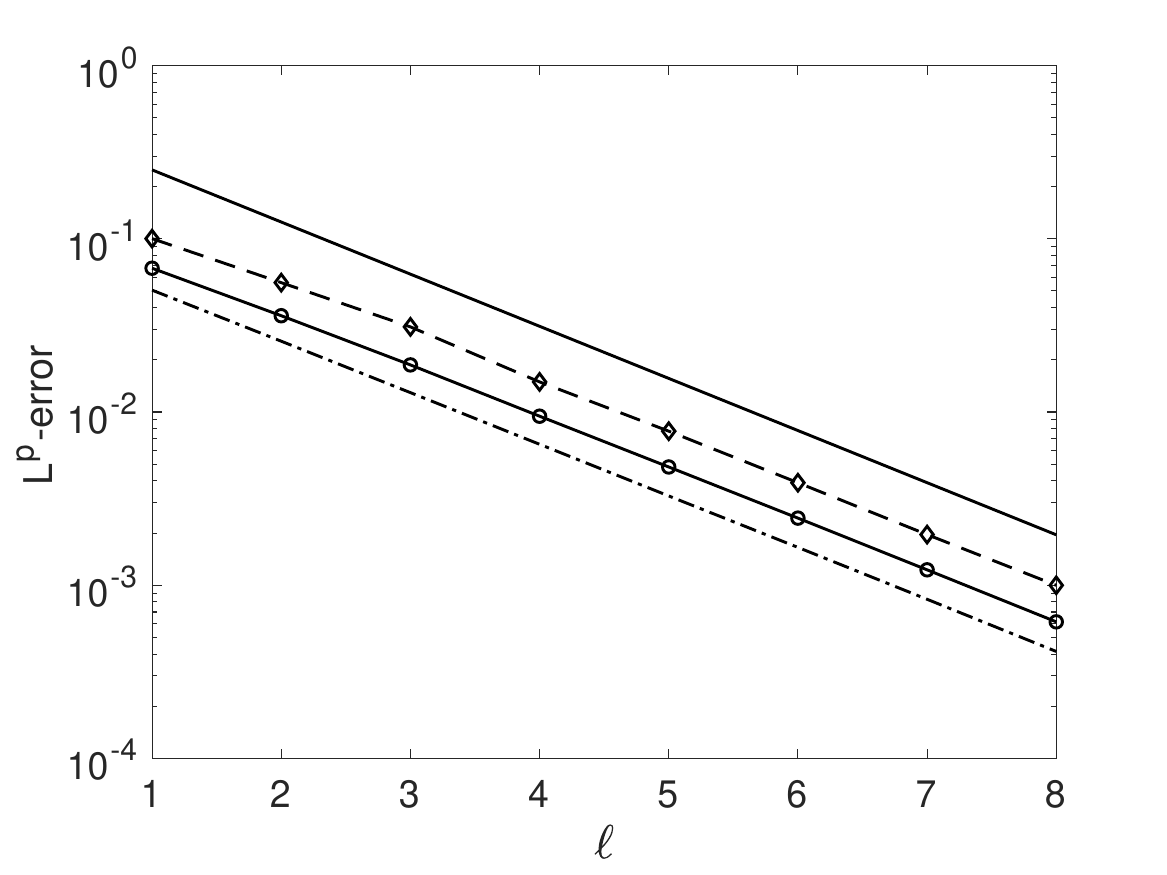}
  \caption{
    Numerical estimates of the error
    \mbox{$\|\widetilde \Psi^\ell(u_0) - \widetilde \Psi^{\ell-1}(u_0)\|_p$}
    by the Monte Carlo method~\eqref{eq:testNonlinearRatesMc}
    for $p=2$ (dash-dot), $p=4$ (solid-circle), and $p=8$ (dash-diamond).
    The solid line represents the reference function $f(\ell) = 2^{-(\ell+1)}$.
  }
  \label{fig:testNonlinearRates}
\end{figure}
The left subfigure in Figure~\ref{fig:nonlinearMseVsCostFull} displays
the results of the runtime-to-MSE studies of EnKF and
MLEnKF.
\begin{figure}[htbp]
  \centering
  \includegraphics[width=0.51\textwidth]{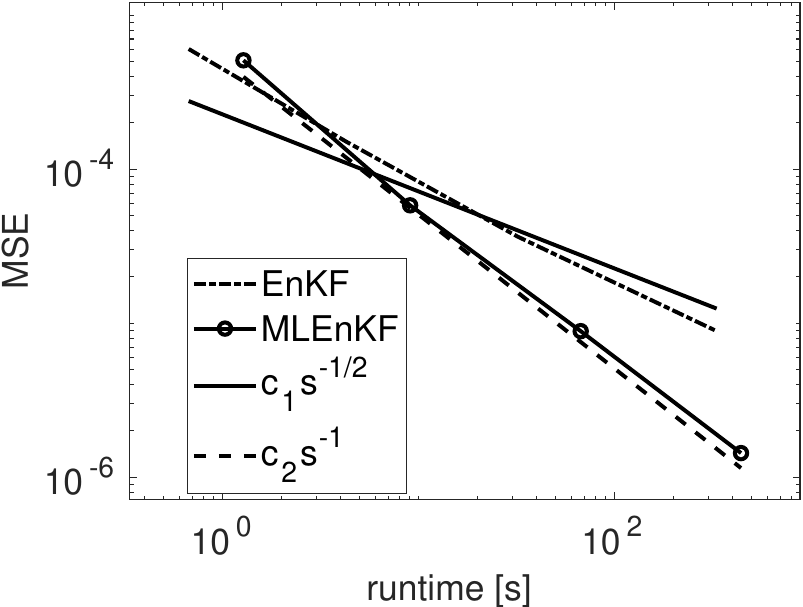}
  \includegraphics[width=0.48\textwidth]{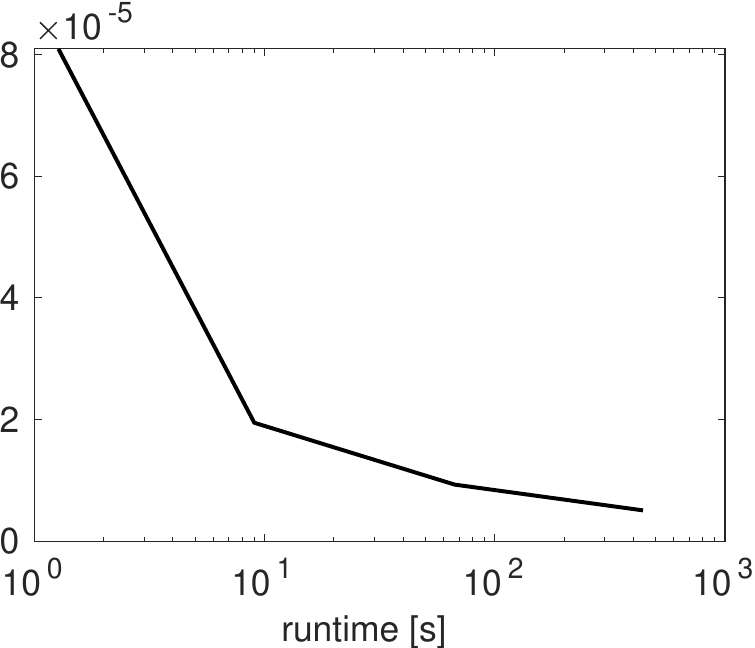}
  \caption{ Left: Runtime-to-MSE for the nonlinear filtering
    problem in Section~\ref{subsec:numSchemeNonlinear} using the fully-discrete method.
    Right: Graph of $(\text{Runtime(MLEnKF)}, \,
    \text{MSE(MLEnKF)}\times \text{Runtime(MLEnKF)}/L^3)$ for the
    fully-discrete method.  }
  \label{fig:nonlinearMseVsCostFull}
\end{figure}
As pseudo-reference solution, we use the approximation
\[
 \frac{1}{200} \sum_{i=1}^{200} \mu^{\rm{ML}}_{n,i}[\varphi]  \approx \hat{\bar{\mu}}_n[\varphi], \quad n=1,2,\ldots, 40,
\]
with the MLEnKF estimator $\mu^{\rm{ML}}_{n,i}[\varphi]$ here being
computed on a finer resolution than all those considered in the
runtime-to-MSE study. The right subfigure in
Figure~\ref{fig:nonlinearMseVsCostFull} displays the graph of
\[
(\text{Runtime(MLEnKF)}, \, \text{MSE(MLEnKF)}\times \text{Runtime(MLEnKF)}/L^3).
\]
Once again, the numerical observations are consistent with the
theoretical asymptotical behavior predicted by~\eqref{eq:mlMseRuntime}
and~\eqref{eq:mcMseRuntime}.

\begin{remark}[MLEnKF versus Multilevel particle filters]
To the best of our knowledge, there does not exist a general multilevel
particle filter for SPDE to this date. 
When the effective dimension on level $\ell$ is $N_\ell$, 
the general requirement for particle filters  
is that the ensemble size on that level is bounded
from below by $ce^{N_\ell}$ particles, for some constant $c>0$.  
Effective dimension refers to the dimension of the space 
over which importance sampling needs to be performed 
\cite{bengtsson2008curse, chatterjee2015sample, agapiou2017importance}.
For example, in the case of full observations, 
the effective dimension can be equal to the state-space dimension.
For MLEnKF, on the other hand, the level $\ell$ ensemble size is always bounded
from above by $\cO(L^2 N_\ell^{(\beta-d\gamma_x-\gamma_t)/(2d)} )$, 
even with full observations. The set of
MLEnKF-tractable problems is therefore substantially larger than
the set of problems tractable by particle filters. 
\end{remark}

\section{Conclusion}
\label{sec:conclusion}

We have presented the design and analysis of a multilevel EnKF method
for infinite-dimensional spatio-temporal processes depending on a
hierarchical decomposition of both the spatial and the temporal
parameters.  We have proved theoretically and provided numerical
evidence that under suitable assumptions, a similar asymptotic
cost-to-accuracy
is obtained for MLEnKF as that one obtains for standard
multilevel Monte Carlo methods.  This result has potential for broad
impact across application areas in which there has been a recent
explosion of interest in EnKF, for example weather prediction and
subsurface exploration.

\appendix

\section{Marcinkiewicz--Zygmund inequalities for separable Hilbert spaces}
\label{app:mzIneq}
In order to prove Lemma~\ref{lem:iidcover}, we will need the following two 
lemmas for extending the Marcinkiewicz--Zygmund inequality from finite-dimensional
state-spaces to separable Hilbert spaces.
\begin{lemma}\cite[Theorem 5.2]{kwiatkowski2014convergence}
\label{lem:mz}
Let $2 \leq p < \infty$ and $X_i \in L^p(\Omega,\cH)$ be \iid samples of $X \in L^p(\Omega,\cH)$. Then 
\begin{equation}\label{eq:LawLargeNum-r}
\|E_M[X] - \E{X}\|_{L^p(\Omega,\cH)} \leq \frac{c_p}{\sqrt{M}} \|X - \E{X}\|_{L^p(\Omega,\cH)}
\end{equation}
where $c_p$ only depends on $p$.
\end{lemma}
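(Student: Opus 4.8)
The plan is to recognize \eqref{eq:LawLargeNum-r} as the Hilbert-space Marcinkiewicz--Zygmund/Rosenthal estimate and to prove it by symmetrization followed by the Kahane--Khintchine inequality, the point being that in a Hilbert space the second moment of a Rademacher sum collapses to a sum of squared norms. First I would center the samples: set $Z_i \coloneq X_i - \E{X}$, so that the $Z_i$ are \iid mean-zero elements of $L^p(\Omega,\cH)$ and $E_M[X] - \E{X} = \frac1M\sum_{i=1}^M Z_i$. Raising~\eqref{eq:LawLargeNum-r} to the $p$-th power and clearing the factor $M^{-p}$, the claim is then equivalent to
\[
\E{\hNorm{\textstyle\sum_{i=1}^M Z_i}^p} \le c_p^p\, M^{p/2}\, \E{\hNorm{Z}^p},
\]
and it is this inequality that I would establish.

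Next I would symmetrize. Introducing an independent copy $(Z_i')_{i=1}^M$ of the sample and a Rademacher sequence $(\varepsilon_i)_{i=1}^M$ independent of everything, Jensen's inequality applied to the mean-zero copy together with the distributional identity $(Z_i-Z_i')_i \overset{\mathcal{D}}{=} (\varepsilon_i(Z_i-Z_i'))_i$ and the triangle inequality give the standard bound
\[
\E{\hNorm{\textstyle\sum_{i=1}^M Z_i}^p} \le 2^p\, \E{\hNorm{\textstyle\sum_{i=1}^M \varepsilon_i Z_i}^p}.
\]
Conditioning on $(Z_i)_i$ and invoking the Kahane--Khintchine inequality, the $p$-th moment in the $\varepsilon$-randomness is dominated by the second moment up to a constant $C_p$ depending only on $p$; and since $\cH$ is a Hilbert space the orthonormality of the Rademacher variables yields the clean identity
\[
\mathbb{E}_{\varepsilon}\Big[\hNorm{\textstyle\sum_{i=1}^M \varepsilon_i Z_i}^2\Big] = \sum_{i=1}^M \hNorm{Z_i}^2,
\]
so that, taking expectation in $(Z_i)_i$,
\[
\E{\hNorm{\textstyle\sum_{i=1}^M \varepsilon_i Z_i}^p} \le C_p^p\, \E{\Big(\textstyle\sum_{i=1}^M \hNorm{Z_i}^2\Big)^{p/2}}.
\]

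The final step is pure bookkeeping. Since $p/2\ge 1$, the triangle inequality in $L^{p/2}(\Omega)$ gives $\big\|\sum_i \hNorm{Z_i}^2\big\|_{L^{p/2}(\Omega)} \le \sum_i \big\|\hNorm{Z_i}^2\big\|_{L^{p/2}(\Omega)} = M\,\norm{Z}_{L^p(\Omega,\cH)}^2$, whence $\E{(\sum_i \hNorm{Z_i}^2)^{p/2}} \le M^{p/2}\E{\hNorm{Z}^p}$. Combining the three displays yields the boxed inequality with $c_p = 2C_p$, and taking $p$-th roots and dividing by $M$ recovers~\eqref{eq:LawLargeNum-r} with $\norm{Z}_{L^p(\Omega,\cH)} = \norm{X-\E{X}}_{L^p(\Omega,\cH)}$. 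I expect the only genuinely delicate ingredient to be the Kahane--Khintchine comparison of the $L^p$ and $L^2$ norms of a Rademacher sum with a constant depending on $p$ alone; this is a classical fact that one may either cite or derive by the usual hypercontractivity argument, and it is the technical heart of the estimate. Everything else reduces to symmetrization and two applications of Jensen's and the triangle inequalities.
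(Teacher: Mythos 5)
Your proof is correct and follows essentially the same route as the paper's: centering, symmetrization via an independent copy and Jensen's inequality, Rademacher randomization, the Kahane--Khintchine comparison (which the paper phrases as the Hilbert space being of R-type 2), and a final $M^{p/2}$ bookkeeping bound (you use Minkowski in $L^{p/2}$ where the paper uses H\"older, with the same outcome). No gaps.
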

\begin{proof}
Let $r_1, r_2, \ldots$ denote a sequence of real-valued \iid random variables with $P(r_i= \pm 1) = 1/2$.
A Banach space $\cK$ is said to be of R-type $q$ if there exists a $c>0$ such that for  every $\bar n \in \bbN$
and for all (deterministic) $x_1, x_2, \ldots, x_{\bar n} \in \cK$,
\[
\E{ \Big\|\sum_{i=1}^{\bar n} r_i x_i \Big\|_{\cK}} \leq c \parenthesis{\sum_{i=1}^{\bar{n}} \|x_i\|_{\cK}^q}^{1/q}.
\]
It is clear that all Hilbert spaces (and for our interest $\cH$, in
particular) are of R-type 2, since their norms are induced by an inner
product. Following the proofs of~\cite[Proposition 2.1 and Corollary
2.1]{woyczynski1980}, let $\{X_i'\}$ denote an additional sequence of
i.i.d.~samples of $X\in L^p(\Omega,\cH)$ for which the collection of
r.v.~$\{X_i\} \cup \{X_i'\}$ also is i.i.d. Introducing the
symmetrization $\widetilde X_i \coloneq (X_i - X_i')$, and noting that
\[
\E{X_i - \E{X}} = \E{ \E{\widetilde X_i \mid X_i} },
\]
we derive by the conditional Jensen's inequality that
\begin{multline*}
    \E{ \hNorm{\sum_{i=1}^{\bar n} X_i - \E{X} }^p} \leq
    \E{ \hNorm{\E{\sum_{i=1}^{\bar n} \widetilde X_i \Bigg| \{X_i\}_{i=1}^{\bar n}}}^p} \\
    \le \E{\hNorm{\sum_{i=1}^{\bar n} \widetilde X_i}^p} 
     = \E{ \hNorm{\sum_{i=1}^{\bar n} r_i \widetilde X_i}^p}
      \leq c \E{ \parenthesis{\sum_{i=1}^{\bar n} \hNorm{\widetilde X_i  }^2}^{p/2}} \\
   \leq c 2^p \, \E{ \parenthesis{\sum_{i=1}^{\bar n} \hNorm{X_i -\E{X}}^2}^{p/2}}.
\end{multline*}
And by another application of H\"older's inequality,
\[
\begin{split}
\E{ \hNorm{\sum_{i=1}^{M} \frac{X_i - \E{X}}{M} }^p} 
&\leq \hat{c} M^{-p} \E{\parenthesis{\sum_{i=1}^{M} \hNorm{X_i-\E{X}}^2}^{p/2}} \\
&\leq \hat{c} M^{-p/2} \E{\hNorm{X-\E{X}}^p}. 
\end{split}
\]
\end{proof}

\begin{lemma}\label{lem:covM-Lr-error}
Let $X,Y \in L^p(\Omega,\cH)$, for some $p \geq 2$. 
Then, for any $1 \leq r,s \leq \infty$ satisfying $1/r + 1/s = 1$, it holds that
\begin{equation*}\label{eq:covM-Lr-error}
\begin{split}
\|\cov_M[X,Y] - \cov[X,Y]\|_{L^p(\Omega,\cH\otimes \cH)}
\leq \frac{c}{\sqrt{M}}
\|X\|_{L^{pr}(\Omega,\cH)} 
\|Y\|_{L^{ps}(\Omega,\cH)}
\end{split}
\end{equation*}
where the upper bound for the constant 
$c = \dfrac{M}{M-1}\bigg(2c_p + \dfrac{c_{pr}c_{ps}+1}{\sqrt{M}}\bigg)$ 
only depends on $r,s$ and $p$.
\end{lemma}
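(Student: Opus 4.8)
The plan is to exploit that both the sample covariance $\cov_M[X,Y]$ from~\eqref{eq:sampleCov} and the population covariance $\cov[X,Y]$ are invariant under centering, so I would first reduce to the case $\E{X} = \E{Y} = 0$; the statement for general $X,Y$ then follows from the elementary estimate $\|X - \E{X}\|_{L^q(\Omega,\cH)} \le 2\|X\|_{L^q(\Omega,\cH)}$ (Jensen), which costs only a harmless constant factor. Under the centering assumption one has $\cov[X,Y] = \E{X \otimes Y}$ and $E_M[X]$ has mean zero, and writing $\tfrac{M}{M-1} = 1 + \tfrac{1}{M-1}$ I would split the error into the three clean terms
\[
\cov_M[X,Y] - \cov[X,Y] = \frac{M}{M-1}\left(E_M[X\otimes Y] - \E{X\otimes Y}\right) + \frac{1}{M-1}\E{X\otimes Y} - \frac{M}{M-1}\, E_M[X]\otimes E_M[Y],
\]
the point of centering being that the cross terms coupling $E_M[X]$ with $\E{Y}$ cancel.

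I would then bound each term in $L^p(\Omega,\cH\otimes\cH)$. The first is a centered sample average of the \iid $\cH\otimes\cH$-valued variables $X_i \otimes Y_i$; since $\cH\otimes\cH$ is again a separable Hilbert space, hence of R-type $2$, Lemma~\ref{lem:mz} applies verbatim in $\cH\otimes\cH$ and controls it by $\frac{M}{M-1}\frac{c_p}{\sqrt{M}}\|X\otimes Y - \E{X\otimes Y}\|_{L^p(\Omega,\cH\otimes\cH)} \le \frac{M}{M-1}\frac{2c_p}{\sqrt{M}}\|X\otimes Y\|_{L^p(\Omega,\cH\otimes\cH)}$. The third term is a tensor product of two centered sample averages, each controlled by Lemma~\ref{lem:mz} in $\cH$, giving $\|E_M[X]\|_{L^{pr}(\Omega,\cH)} \le \frac{c_{pr}}{\sqrt{M}}\|X\|_{L^{pr}(\Omega,\cH)}$ and the analogue for $Y$ at exponent $ps$, so the product carries a factor $\frac{c_{pr}c_{ps}}{M}$. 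The middle term is a pure bias correction of size $\frac{1}{M-1}\|\E{X\otimes Y}\| \le \frac{1}{M-1}\|X\otimes Y\|_{L^1(\Omega,\cH\otimes\cH)}$, contributing the remaining $\frac{1}{M}$.

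In all three terms the tensor is handled identically: by the isometry~\eqref{eq:hsdef} one has $\hhNorm{X\otimes Y} = \hNorm{X}\hNorm{Y}$ pointwise, whence Hölder's inequality with the conjugate exponents $r,s$ yields $\|X\otimes Y\|_{L^p(\Omega,\cH\otimes\cH)} \le \|X\|_{L^{pr}(\Omega,\cH)}\|Y\|_{L^{ps}(\Omega,\cH)}$, and similarly for $E_M[X]\otimes E_M[Y]$. Collecting the contributions and factoring out $\frac{1}{\sqrt{M}}$ reproduces exactly $c = \frac{M}{M-1}\big(2c_p + \frac{c_{pr}c_{ps}+1}{\sqrt{M}}\big)$. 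The only genuinely conceptual step, and the one I expect to be the main obstacle, is invoking the Marcinkiewicz--Zygmund inequality in the tensor-product space $\cH\otimes\cH$ rather than in $\cH$: this is what upgrades the vector-valued law-of-large-numbers rate to the rank-two covariance estimator at the same $M^{-1/2}$ rate. Everything else is bookkeeping of the prefactors $\frac{M}{M-1}$ and $\frac{1}{\sqrt{M}}$ across the three terms.
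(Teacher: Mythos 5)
Your proposal is correct and follows essentially the same route as the paper: reduce to the centered case, split the error into the centered sample average of $X\otimes Y$, the tensor product of the two centered sample means, and the $\tfrac{1}{M}$ bias term, then control the first via the Marcinkiewicz--Zygmund inequality in the Hilbert space $\cH\otimes\cH$, the second via two applications of it in $\cH$, and all three via H\"older with the conjugate exponents $r,s$ through the isometry $\hhNorm{X\otimes Y}=\hNorm{X}\hNorm{Y}$. The bookkeeping of the prefactors matches the stated constant, so there is nothing to add.
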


\begin{proof}

Since $\cov[X,Y] = \cov[X-\E{X}, Y-\E{Y}]$ and $\cov_M[X,Y] = \cov_M[X-\E{X},Y-\E{Y}]$, cf.~\eqref{eq:sampleCov}, 
we may without loss of generality assume that $\E{X} = \E{Y} = 0$.
Using the triangle inequality,
\begin{equation*}
\begin{split}
& \frac{M-1}{M}\|\cov_M[X,Y] - \cov[X,Y]\|_p \\
& \leq 
\| E_M[X \otimes Y] - \E{X \otimes Y}\|_p  + \| E_M[X] \otimes E_M[Y] \|_p
 + \frac{1}{M}\|\E{X \otimes Y}\|_{\cH\otimes \cH}.
\end{split}
\end{equation*}
Estimate~\eqref{eq:LawLargeNum-r} and H\"older's inequality yield
\[
\begin{split}
\| E_M[X \otimes Y] - \E{X \otimes Y}\|_p
&\leq \frac{c_p}{\sqrt{M}} \|X\otimes Y - \E{X\otimes Y}\|_p \\
&\leq \frac{2c_p}{\sqrt{M}} \|X\otimes Y \|_p
 \leq \frac{2c_p}{\sqrt{M}} \|X\|_{pr} \|Y\|_{ps}.
\end{split}
\]
Similarly, since $\E{X} = \E{Y} = 0$ by assumption, 
we obtain by \eqref{eq:LawLargeNum-r} and H\"older's inequality
\begin{equation*}
\begin{split}
\| E_M[X] \otimes E_M[Y] \|_{p}
\leq  
\|E_M[X]\|_{pr} 
\|E_M[Y]\|_{ps}
\leq \frac{c_{pr}c_{ps}}{M} 
\|X\|_{pr} 
\|Y\|_{ps}.
\end{split}
\end{equation*}
And, finally, for the last term
\begin{equation*}
\frac{1}{M}\|\E{X \otimes Y}\|_{\cH\otimes \cH}
\leq \frac{1}{M}\|X \otimes Y\|_{L^1(\Omega,\cH\otimes \cH)}
\leq \frac{1}{M} \|X\|_{L^{pr}(\Omega,\cH)} \|Y\|_{L^{ps}(\Omega,\cH)}.
\end{equation*}
\end{proof}

\section{Proof of Lemma~\ref{lem:lemmaConv}} \label{app:proofLemmaConv}

\begin{proof}
Introducing the function $g:(1,\infty) \times (0,\infty) \to \bbR$
defined by
\[
g(\lambda,s) = e^{-\lambda s } + \frac{1-e^{-\lambda s}}{\lambda },
\]
consecutive iterations of the scheme~\eqref{eq:modeUFine}
for $j \le N_m$ yield 
\begin{equation*}\label{eq:twoIterations}
\begin{split}
U^{(j)}_{m,J_m} &=  g(\lambda_j,\Delta t_m) U^{(j)}_{m,J_m-1} + R_{m,J_m-1} \\
&= \prt{g(\lambda_j,\Delta t_m)}^2 U_{m,J_m-2}^{(j)} + g(\lambda_j,\Delta t_m) R_{m,J_m-2}^{(j)}+ R_{m,J_m-1}^{(j)}\\
& = \ldots \\
  & = \prt{g(\lambda_j,\Delta t_m)}^{J_m} U_{m,0}^{(j)} +
  \sum_{k=0}^{J_m-1} \prt{g(\lambda_j,\Delta t_m)}^{J_m-(k+1)} R_{m,k}^{(j)},
\end{split}
\end{equation*}
where we recall that the initial data is given
by $U_{m,0} = \cP_m \bar u_0$ with $\bar u_0 \in L^2(\Omega,\cH)$.
And since $J_{m} = 2J_{m-1}$,
consecutive iterations of the coupled coarse 
scheme~\eqref{eq:modeUCoarse} for $j \le N_{m-1}$ yield
\begin{equation*}\label{eq:twoIterations}
\begin{split}
  U^{(j)}_{m-1,J_{m-1}} &=  g(\lambda_j,\Delta t_{m-1})
  U^{(j)}_{m-1,J_{m-1}-1} + e^{-\lambda_j \Delta t_{m} } R_{m,J_m-2}^{(j)}+R_{m,J_m-1}^{(j)} \\
  &= \prt{g(\lambda_j,\Delta t_{m-1})}^2 U^{(j)}_{m-1,J_{m-1}-2} \\
&  \quad + g(\lambda_j,\Delta t_{m-1})\prt{e^{-\lambda_j \Delta t_{m} } R_{m,J_m-4}^{(j)}
    +R_{m,J_m-3}^{(j)}}\\ 
& \quad + e^{-\lambda_j \Delta t_{m} } R_{m,J_m-2}^{(j)}+R_{m,J_m-1}^{(j)} \\
& = \ldots \\
& = \prt{g(\lambda_j,\Delta t_{m-1})}^{J_{m-1}} U_{m-1,0}^{(j)} \\
&+  \sum_{k=0}^{J_{m-1} -1}  \prt{g(\lambda_j, \Delta t_{m-1})}^{J_{m-1}-(k+1)}
\prt{e^{-\lambda_j \Delta t_{m} } R_{m,2k}^{(j)}+R_{m,2k+1}^{(j)}}.
\end{split}
\end{equation*}
The $j$-th mode final time difference of the coupled solutions
for $ j\le N_{m-1}$ thus becomes
\begin{equation}\label{eq:jthModeDiff}
\begin{split}
&  U^{(j)}_{m,J_m} - U^{(j)}_{m-1,J_{m-1}} =
  \prt{ \prt{g(\lambda_j,\Delta t_m)}^{2J_{m-1}} -
    \prt{ g(\lambda_j,\Delta t_{m-1}) }^{J_{m-1}} } U_{\ell,0}^{(j)} \\
& + \sum_{k=0}^{J_{m-1}-1} \prt{ \prt{g(\lambda_j,\Delta t_m)}^{2k} 
  - \prt{g(\lambda_j,\Delta t_{m-1})}^{k} } R_{m,J_m-2k+1}^{(j)} \\
  &+ \sum_{k=0}^{J_{m-1}-1}  \Bigg(
   \prt{ g(\lambda_j,\Delta t_{m}) }^{2k}  g(\lambda_j,\Delta t_m)  - \prt{g(\lambda_j,\Delta t_{m-1})}^{k} e^{-\lambda_j \Delta  t_{m} }  \Bigg)R_{m,J_m-2(k+1)}^{(j)} \\
& \eqcolon I_{m,j,1} + I_{m,j,2} + I_{m,j,3}.
\end{split}
\end{equation}
For bounding these three terms, we need to estimate the 
difference between powers of $\prt{g(\lambda_j,\Delta t_m)}^2$ and
$g(\lambda_j,\Delta t_{m-1})$. Note first that
\begin{equation}\label{eq:gSquaredEq}
\begin{split}
\prt{g(\lambda_j,\Delta t_m)}^2 &= e^{-2\lambda_j \Delta t_{m} } + 2 e^{- \lambda_j \Delta t_m} \frac{1-e^{-\lambda_j \Delta
      t_m}}{\lambda_j} + \prt{\frac{1-e^{-\lambda_j \Delta
        t_m}}{\lambda_j}}^2 \\
&= \underbrace{e^{-\lambda_j \Delta t_{m-1} } + \frac{1-e^{-\lambda_j \Delta
    t_{m-1}}}{\lambda_j}}_{=g(\lambda_j,\Delta t_{m-1})}  + (1-\lambda_j)   \prt{\frac{1-e^{-\lambda_j \Delta
      t_m}}{\lambda_j}}^2.
\end{split}
\end{equation}

\begin{remark}
  Equations~\eqref{eq:jthModeDiff} and~\eqref{eq:gSquaredEq} show that
  to leading order, the additive noise from two consecutive iterations
  of the fine scheme equals the additive noise from one
  corresponding iteration of the coupled coarse scheme. The
  strong coupling of the coarse and fine
  schemes is crucial for achieving the order 1 a priori time
  discretization convergence rate.
\end{remark}

Since $\inf_{j \in \bbN }\lambda_j = \lambda_1 >1$,
it holds for all $j\in \bbN$ that
\begin{equation}\label{eq:gBound1}
  \prt{g(\lambda_j,\Delta t_m)}^2 < g(\lambda_j,\Delta t_{m-1})<1
\end{equation}
 and
\[
\abs{\prt{g(\lambda_j,\Delta t_m)}^2 -g(\lambda_j,\Delta t_{m-1})} <
\prt{1 - e^{-\lambda_j \Delta t_m}} \frac{1-e^{-\lambda_j \Delta t_m}}{\lambda_j}
\le \prt{1 - e^{-\lambda_j \Delta t_m}} \Delta t_m.
\]
By the mean value theorem, it holds for any $j,k \ge 1$ that
\begin{equation}\label{eq:meanVal1}
\begin{split}
  & \abs{ \prt{g(\lambda_j,\Delta t_m)}^{2k} - \prt{ g(\lambda_j,\Delta t_{m-1}) }^k}
 \le \prt{ g(\lambda_j,\Delta t_{m-1})}^{k-1} k \prt{1 - e^{-\lambda_j \Delta t_m}} \Delta t_m.
\end{split}
\end{equation}
Furthermore,
\begin{equation}\label{eq:gBound2}
\sup_{\lambda  \ge \lambda_1} \lambda e^{-\lambda s}  \le
\frac{e^{-1}}{s}, \quad \text{for any} \quad s>0. 
\end{equation}
By~\eqref{eq:gBound1},~\eqref{eq:meanVal1},,~\eqref{eq:gBound2},
the mean value theorem and recalling that $\Delta t_{m-1} = 2 \Delta t_m$,
it holds for any $1<k \le J_{m-1}$ and $j\ge 1$
and some $\theta_{jk} \in [0,1]$ that 
\begin{equation}\label{eq:squareBound2}
\begin{split}
  & \abs{ \prt{g(\lambda_j,\Delta t_m)}^{2k} - \prt{ g(\lambda_j,\Delta t_{m-1}) }^k}
\le \prt{ e^{-\lambda_j \Delta t_{m-1} } + \frac{1-e^{-\lambda_j    \Delta
        t_{m-1}}}{\lambda_j} }^{k-1} k \lambda_j \Delta t_m^2 \\
&\le e^{-\lambda_j (k-1) \Delta t_{m-1}} k \lambda_j \Delta t_m^2\\
  & +\prt{ e^{-\lambda_j \Delta t_{m-1} } + \theta_{jk} \frac{1-e^{-\lambda_j    \Delta t_{m-1}}}{\lambda_j} }^{k-2} (k-1) k \Delta t_{m-1}  \Delta t_m^2\\
&  \le \frac{e^{-1}k}{(k-1)\Delta t_{m-1}} \Delta t_m^2 +
\frac{T^2}{2} \Delta t_{m}\\ 
&\le  \frac{1 + T^2}{2} \Delta t_{m}.
\end{split}
\end{equation}

From~\eqref{eq:squareBound2}, we conclude that for $j\le N_{m-1}$,
\begin{equation*}\label{eq:i1Bound}
\abs{I_{m,j,1}} \le \frac{1+T^2}{2}  |U_{\ell,0}^{(j)}|\Delta t_m.
\end{equation*}
For bounding the terms $I_{m,j,2}$ and $I_{m,j,3}$, note by~\eqref{eq:jthModeDiff}
that both terms are linear combinations of i.i.d. Gaussians
from the sequence 
\[
R^{(j)}_{m,k} \sim N\prt{0, \frac{1-e^{-\lambda_j \Delta t_{m}}}{2 \lambda_j^{1+2b}}}, \quad
k =0,1,\ldots, J_{m}-1,
\] 
cf.~\eqref{eq:rMDist}, and hence, both terms mean zero-valued Gaussians.
Furthermore, $I_{m,j,2}$ and $I_{m,j,3}$ are mutually independent as
any summand of the former term is independent of any summand 
from the latter.
Consequently, $I_{m,j,2}+I_{m,j,3}$ is a mean zero-valued Gaussian with variance
\[
\E{(I_{m,j,2}+I_{m,j,3})^2} = \E{I_{m,j,2}^2} + \E{I_{m,j,3}^2}.
\]
By the mutual independence of all terms in $I_{m,j,2}$, it
holds for $j \le N_{m-1}$ that
\begin{equation}\label{eq:i2}
\begin{split}
  \E{I_{m,j,2}^2} & = 
\sum_{k=0}^{J_{m-1}-1} \prt{ \prt{g(\lambda_j,\Delta t_m)}^{2k} 
  - \prt{g(\lambda_j,\Delta t_{m-1})}^{k} }^2
\E{\prt{R_{m,J_m-2k+1}^{(j)} }^2}\\
& \le 
\frac{1-e^{-\lambda_j \Delta t_{m}}}{2 \lambda_j^{1+2b}} \sum_{k=0}^{\infty} \prt{  \prt{g(\lambda_j,\Delta t_m)}^{2k} 
  - \prt{g(\lambda_j,\Delta t_{m-1})}^{k} }^2\\
&= \frac{1-e^{-\lambda_j \Delta t_{m}}}{2 \lambda_j^{1+2b}}
\sum_{k=0}^{\infty} \Bigg[ \prt{g(\lambda_j,\Delta t_m)}^{4k}
 +  \prt{g(\lambda_j,\Delta t_{m-1})}^{2k}  \\
& \qquad \qquad \qquad \qquad \qquad 
-2 \prt{  \prt{g(\lambda_j,\Delta t_m)}^2 g(\lambda_j,\Delta t_{m-1}) }^{k} \Bigg].
\end{split}
\end{equation}
By the strict inequality~\eqref{eq:gBound1}, we are dealing with three sums of
geometric series:
\begin{equation*}\label{eq:kSum1}
\sum_{k=0}^{\infty} \prt{  \prt{g(\lambda_j,\Delta t_m)}^2
  g(\lambda_j,\Delta t_{m-1}) }^{k}  
=\frac{1}{1-g(\lambda_j,\Delta t_{m-1})\prt{g(\lambda_j,\Delta t_{m})}^2 },
\end{equation*}
\begin{equation*}\label{eq:kSum2}
\begin{split}
\sum_{k=0}^{\infty}  \prt{g(\lambda_j,\Delta t_m)}^{4k} &=
  \frac{1}{1-\prt{g(\lambda_j,\Delta t_m)}^4},
\end{split}
\end{equation*}
and
\begin{equation*}\label{eq:kSum3}
\begin{split}
\sum_{k=0}^{\infty} \prt{g(\lambda_j,\Delta t_{m-1})}^{2k} &=
  \frac{1}{1-\prt{g(\lambda_j,\Delta t_{m-1})}^2 }\,\, .
\end{split}
\end{equation*}

By applying $g(\lambda_j,\Delta t_{m}) < g(\lambda_j,\Delta t_{m-1}) <1$
and the mean value theorem,
\[
\begin{split}
&\sum_{k=0}^{\infty}  \prt{g(\lambda_j,\Delta t_m)}^{4k} +
\prt{g(\lambda_j,\Delta t_{m-1})}^{2k}  = \frac{2-\prt{g(\lambda_j,\Delta t_{m})}^4 - \prt{g(\lambda_j,\Delta t_{m-1})}^2}{\prt{1-\prt{g(\lambda_j,\Delta t_{m})}^4}\prt{1-\prt{g(\lambda_j,\Delta t_{m-1})}^2}}\\
& = \frac{2\prt{1-\prt{g(\lambda_j,\Delta t_{m})}^2g(\lambda_j,\Delta t_{m-1})}
  - \prt{ \prt{g(\lambda_j,\Delta t_{m})}^2 - g(\lambda_j,\Delta t_{m-1}) }^2 }
{\prt{1-\prt{g(\lambda_j,\Delta t_{m})}^2g(\lambda_j,\Delta t_{m-1})}^2
  - \prt{ \prt{g(\lambda_j,\Delta t_{m})}^2 - g(\lambda_j,\Delta t_{m-1}) }^2} \\
& \le  \frac{2}{1-g(\lambda_j,\Delta t_{m-1})\prt{g(\lambda_j,\Delta t_{m})}^2 }
+ \frac{2}{\prt{1-g(\lambda_j,\Delta t_{m})}^3} \frac{\prt{1-e^{-\lambda_j \Delta t_m}}^4}{\lambda_j^2} \,
\end{split}
\]
where the second summand in the last inequality follows from~\eqref{eq:gSquaredEq}.
By~\eqref{eq:i2}, we obtain that for all $j \le N_{m-1}$,
\begin{equation*}\label{eq:i2Bound}
\begin{split}
 \E{I_{m,j,2}^2} &\le \frac{1}{\prt{1-g(\lambda_j,\Delta t_{m})}^3}
 \frac{\prt{1-e^{-\lambda_j \Delta t_m}}^5}{\lambda_j^{3+2b}} \\
& \le  \frac{\lambda_j^3}{(\lambda_j-1)^3
  \prt{1-e^{-\lambda_j \Delta t_m}}^3}  \frac{\prt{1-e^{-\lambda_j
      \Delta t_m}}^3}{\lambda_j^{1+2b}} \Delta t_m^2 \\
&\le  \prt{\frac{\lambda_1}{\lambda_1-1}}^3\frac{\Delta t_m^2}{\lambda_j^{1+2b}}.
\end{split}
\end{equation*}
The last term is bounded by a similar argument:
For all $j \le N_{m-1}$,
\begin{equation*}\label{eq:i3Bound}
\begin{split}
&\E{I_{m,j,3}^2}  = 
\sum_{k=0}^{J_{m-1}-1}  
 \Bigg(  \prt{ g(\lambda_j,\Delta t_{m}) }^{2k} \prt{e^{-\lambda_j
       \Delta t_m } + \frac{1-e^{-\lambda_j\Delta t_m}}{\lambda_j}}\\
& \qquad \qquad 
 -  \prt{g(\lambda_j,\Delta t_{m-1})}^{k} e^{-\lambda_j \Delta t_m} \Bigg)^2 \E{ \prt{R_{m,J_m-2(k+1)}^{(j)} }^2 }\\
&\le 
\frac{1-e^{\lambda_j\Delta t_m}}{\lambda_j^{1+2b}}\sum_{k=0}^{\infty}  \Bigg[
 \prt{  \prt{ g(\lambda_j,\Delta t_{m}) }^{2k}   -
   \prt{g(\lambda_j,\Delta t_{m-1})}^{k} }^2\\
& \qquad \qquad \qquad  \qquad \qquad \qquad + \prt{ g(\lambda_j,\Delta t_{m}) }^{2k}
\prt{\frac{1-e^{\lambda_j\Delta t_m}}{\lambda_j}}^2 \Bigg] \\
& \le 3\prt{\frac{\lambda_1}{\lambda_1-1}}^3 \frac{\Delta t_m^2 }{\lambda_j^{1+2b}}. 
\end{split}
\end{equation*}
Here, the last inequality follows by observing that
as for $\E{I_{m,j,2}^2}$, 
\[
\frac{1-e^{\lambda_j\Delta t_m}}{\lambda_j^{1+2b}} \sum_{k=0}^{\infty} 
 \prt{  \prt{ g(\lambda_j,\Delta t_{m}) }^{2k}   -
   \prt{g(\lambda_j,\Delta t_{m-1})}^{k} }^2
\le 2\prt{\frac{\lambda_1}{\lambda_1-1}}^3 \frac{\Delta t^2_m}{\lambda_j^{1+2b}},
\]
and
\[
\begin{split}
\frac{\prt{1-e^{\lambda_j\Delta t_m}}^3}{\lambda_j^{3+2b}} \sum_{k=0}^{\infty}   \prt{ g(\lambda_j,\Delta t_{m}) }^{2k}   
&\le \frac{(1-e^{\lambda_j \Delta t_m})}{\lambda_j^{1+2b}}
\frac{1}{1-g(\lambda_j,\Delta t_m)}\Delta t^2_m\\
&= \frac{(1-e^{\lambda_j \Delta t_m})}{\lambda_j^{1+2b}}
\frac{\lambda_j}{(\lambda_j-1)(1-e^{\lambda_j \Delta t_m})}\Delta t^2_m\\
& \le \frac{\lambda_1}{\lambda_1-1} \frac{\Delta
  t^2_m}{\lambda_j^{1+2b}}\le \prt{\frac{\lambda_1}{\lambda_1-1}}^3 \frac{\Delta t^2_m}{\lambda_j^{1+2b}}.
\end{split}
\]

\end{proof}

\section{Additional proofs for completeness}
\label{app:extras}

\begin{proof}[Proof of Lemma \ref{lem:gce}]
Recalling the notation $R_n^{\rm ML} = C^{\rm ML}_nH^*$ and introducing
  the auxiliary operator $\bar R_n \coloneq \bar C_n H^*$, we have
  \[
  \begin{split}
    \mfk_n - K^{\rm ML}_n
    &= \bar R_n(H\bar R_n +\Gamma)^{-1} - R^{\rm ML}_{n}( (H R_n^{\rm ML})^+ +\Gamma)^{-1}\\
    &= \bar R_n((H\bar R_n +\Gamma)^{-1} - ((H R_n^{\rm ML})^+ +\Gamma)^{-1})\\
    & \quad + (\mfc_n - C^{\rm ML}_n)H^*( (H R^{\rm ML}_n)^+ + \Gamma)^{-1}.
  \end{split}
\]
Using the equality
\[
(H\bar R_n +\Gamma)^{-1} - ((H R_n^{\rm ML})^+ +\Gamma)^{-1}
 = (H\bar R_n +\Gamma)^{-1}( (HR_n^{\rm ML})^+ - H \bar R_n) ( (H R^{\rm ML}_n)^+ + \Gamma)^{-1},
\]
we further obtain
  \[
  \begin{split}
    \mfk_n - K^{\rm ML}_n
    &= \bar R_n(H\bar R_n +\Gamma)^{-1}( (H R_n^{\rm ML})^+ - H \bar R_n) ((H R^{\rm ML}_n)^+ + \Gamma)^{-1}\\
    & \quad + (\mfc_n - C^{\rm ML}_n)H^*( (H R^{\rm ML}_n)^+ + \Gamma)^{-1}\\
    & = \mfk_n ( (HR_n^{\rm ML})^+ -  H\bar R_n) ((HR^{\rm ML}_n)^+ +
    \Gamma)^{-1} \\
    & \quad +  (\mfc_n - C^{\rm ML}_n)H^*((HR^{\rm ML}_n)^+ + \Gamma)^{-1}.
  \end{split}
\]
Next, since $(H R^{\rm ML}_{n})^+$ and $\Gamma$
respectively are positive semi-definite and positive definite, 
\[
|( (H R^{\rm ML}_{n})^+  + \Gamma)^{-1}| \leq
| \Gamma^{-1}|  < \infty,
\]
and it follows by inequality~\eqref{eq:mlcov_bound}
and 
\[
\begin{split}
| (H R_n^{\rm ML})^+ -  H \bar R_n|
&\leq | (HR_n^{\rm ML})^+ -  HR_n^{\rm ML} |  
 + | H(R_n^{\rm ML} -   \bar R_n) |\\
& = |(HR_n^{\rm ML})^+ -  HR_n^{\rm ML}| 
 + |H(C_n^{\rm ML} -  \mfc_n ) H^*|
\end{split}
\]
that 
\begin{equation*}\label{eq:kayer1}
  \begin{split}
  \norm{\mfk_n - \kml{n}}_{L(\bbR^{m},\cH)} &\leq \prt{1+ 2\norm{\mfk_n}_{L(\bbR^{m},\cH)} \norm{H}_{L(\cH,\bbR^{m})}}\\
  & \times |\Gamma^{-1}|  \norm{H}_{L(\cH,\bbR^{m})}
  \underbrace{\norm{ \mfc_{n} - {C}^{\rm ML}_{n}}_{L(\cH^*,\cH)}}_{\le \norm{ \mfc_{n} - {C}^{\rm ML}_{n}}_{\cH \otimes \cH}}.
  \end{split}
\end{equation*}

\end{proof}

\begin{proof}[Proof of Lemma \ref{lem:ensdist}]

  We will use an induction argument to show that for arbitrary fixed
  $N \in \bbN$ and $p \ge 2$, it holds for all $n \le N$ that
  \[
  \sum_{\ell=0}^L  \|\hv_{n}^{\ell}- \hmfv_{n}^{\ell}\|_{L^{p'}(\Omega, \cH)}
  \lesssim |\log(\varepsilon)|^n\varepsilon, \quad \forall p' \le 4^{N-n}p.
  \]
  The result then follows by the arbitrariness of $N$ and $p$.
  
By~\eqref{eq:initialMFEnKF}, we have that $\hv_{0}^{\ell} = \hmfv_{0}^{\ell}$, so that for any $p' \ge 2$,
\[
\sum_{\ell=0}^L \|\hv_{0}^{\ell}- \hmfv_{0}^{\ell}\|_{p'} = 0.
\]
Fix $p\geq 2$ and $N \in \bbN$, and assume that
\begin{equation*}
  \sum_{\ell=0}^L \norm{\hv_{n-1}^{\ell}- \hmfv_{n-1}^{\ell}}_{p'} \lesssim |\log(\varepsilon)|^{n-1}\varepsilon,
  \quad \forall p' \le 4^{N+1-n} p.
\label{eq:ind1ens}
\end{equation*}
Then, by Assumption~\ref{ass:psilip}(i),
\begin{equation}
\sum_{\ell=0}^L \|v_n^{\ell} - \mfv_n^\ell\|_{p'} 
\leq  \sum_{\ell=0}^L c_{\Psi} \|\hv_{n-1}^{\ell} - \hmfv_{n-1}^\ell\|_{p'} \lesssim |\log(\varepsilon)|^{n-1}\varepsilon, \quad \forall p' \le 4^{N+1-n} p.
\label{eq:indlip1ens}
\end{equation}
Furthermore, by Lemma~\ref{lem:gce}, 
\begin{equation*}\label{eq:appearanceOfCn}
  \begin{split}
\hNorm{\hv_n^{\ell} - \hmfv_n^\ell}  &\leq   
\norm{I - \Pi_\ell \bar K_nH}_{L(\cH,\cH)} \hNorm{v_n^{\ell} - \mfv_n^\ell}\\
& \quad +  \tilde c_n  \hhNorm{C^{\rm ML}_n - \mfc_n}|\tilde y_n^{\ell}- H v_n^\ell|,
\end{split}
\end{equation*}
for all $\ell=0, \ldots, L$.
H{\"o}lder's inequality then implies
\begin{equation}\label{eq:levelLBound}
\begin{split}
\|\hv_n^{\ell} - \hmfv_n^\ell\|_{p'}  &\leq   
 \norm{I - \Pi_\ell\bar K_nH}_{L(\cH,\cH)} \|v_n^{\ell} - \mfv_n^\ell\|_{p'}\\
 & \quad +  \tilde c_n  \|{C}^{\rm ML}_n - \mfc_n\|_{L^{2p'}(\Omega, \cH\otimes \cH)}
 (\|\tilde y_n^{\ell}\|_{2p'} + \norm{H}_{L(\cH,\bbR^{m})}\|v_n^\ell\|_{2p'}).  
\end{split}
\end{equation}
Plugging~\eqref{eq:indlip1ens} 
into the right-hand side of~\eqref{eq:mlcov}
and using Lemma~\ref{lem:covspliteps}, we obtain
that for all $p' \le 4^{N-n} p$.
\[
\begin{split}
\|C^{\rm ML}_n - \bar{C}^{\rm ML}_n\|_{2p'} &\lesssim \varepsilon +
\sum_{l=0}^L \|v_{n}^{\ell} - \mfv_{n}^\ell\|_{4p'} (\|v_{n}^{\ell}\|_{4p'}  + \|\mfv_{n}^{\ell}\|_{4p'}) \\
& \lesssim |\log(\varepsilon)|^{n-1}\varepsilon.
\end{split}
\]
Summing over the levels in~\eqref{eq:levelLBound}, 
it holds for all $p' \le 4^{N-n} p$ that 
\begin{align*}
\nonumber
 \sum_{\ell=0}^L \|\hv_n^{\ell} - \hmfv_n^\ell\|_{p'} 
& \lesssim 
\sum_{\ell=0}^L  \Big\{ \|v_n^{\ell} - \mfv_n^\ell\|_{p'}  
 +  |\log(\varepsilon)|^{n-1}\varepsilon  (\|\tilde
 y_n^{\ell}\|_{2p'} + \norm{H}_{L(\cH,\bbR^{m})}\|v_n^\ell\|_{2p'}) \Big\} \\
& \lesssim |\log(\varepsilon)|^{n-1} \varepsilon \Big(1+
  \sum_{\ell=0}^L
 (\|\tilde
y_n^{\ell}\|_{2p'} + \norm{H}_{L(\cH,\bbR^{m})}\|v_n^\ell\|_{2p'})\Big)\\
& \lesssim |\log(\varepsilon)|^{n}\varepsilon.
\end{align*}

\end{proof}

\bigskip

{\bf Acknowledgements } Research reported in this publication received
support from the Alexander von Humboldt Foundation, KAUST CRG4 Award
Ref:2584.  HH acknowledges support by RWTH Aachen University and
by Norges Forskningsr{\aa}d, research project 214495 LIQCRY.  RT is a member of the KAUST SRI Center for
Uncertainty Quantification in Computational Science and Engineering.
KJHL was a staff scientist in the Computer Science and Mathematics
Division at Oak Ridge National Laboratory (ORNL) while much of this
research was done and was additionally supported by ORNL Laboratory
Directed Research and Development Strategic Hire and Seed grants.
KJHL additionally acknowledges the support of the School of
Mathematics at the University of Manchester.

\bibliography{mybib}
\bibliographystyle{siam}

\end{document}